\newcommand{\bOne}{\mathds 1}
\numberwithin{equation}{section}
\newtheorem{thm}{Theorem}[section]
\newtheorem{prop}[thm]{Proposition}
\newtheorem{lem}[thm]{Lemma}
\newtheorem{cor}[thm]{Corollary}
\newtheorem{defn}[thm]{Definition}
\theoremstyle{definition}
\newtheorem{rem}[thm]{Remark}
\def\weakto{\rightharpoonup}
\newcommand{\R}{\mathbb R}
\newcommand{\N}{\mathbb N}
\renewcommand{\P}{\mathbb P}
\newcommand{\E}{\mathbb E}
\newcommand{\e}{\varepsilon}
\newcommand{\Dopp}{D^{\operatorname{opp}}}
\newcommand{\Dsame}{D^{\operatorname{same}}}
\newcommand{\Dsamel}{D^{\operatorname{same}, \ell}}
\newcommand{\dist}{\operatorname{dist}}
\newcommand{\bb}{\mathbf b}
\newcommand{\bB}{\mathbf B}
\newcommand{\bW}{\mathbf W}
\newcommand{\bx}{\mathbf x}
\newcommand{\bX}{\mathbf X}
\newcommand{\bY}{\mathbf Y}
\newcommand{\cF}{\mathcal F}
\newcommand{\cK}{\mathcal K}
\newcommand{\cP}{\mathcal P}
\newcommand{\cU}{\mathcal U}
\newcommand{\cW}{\mathcal W}
\newcommand{\cZ}{\mathcal Z}
\newcommand{\sK}{\mathsf K}
\newcommand{\xto}[1]{\xrightarrow{#1}}
\renewcommand{\div}{\mathop{\mathrm{div}}}
\newcommand{\ol}{\overline}
\newcommand{\tstar}[5]{% inner radius, outer radius, tips, rot angle, options
\pgfmathsetmacro{\starangle}{360/#3}
\draw[#5] (#4:#1)
\foreach \x in {1,...,#3}
{ -- (#4+\x*\starangle-\starangle/2:#2) -- (#4+\x*\starangle:#1)
}
-- cycle;
}
\long\def\pvm#1{{\color{blue}#1}}
\long\def\map#1{{\color{red}#1}}
\long\def\thomas#1{{\color{purple}#1}}
\long\def\comm#1{{\color{orange}#1}} % to toggle off: comment out this line, and uncomment the following line
\long\def\comm#1{}
\long\def\pvm#1{}
\long\def\map#1{}
\long\def\thomas#1{}
\title{Global existence and mean-field limit for a stochastic interacting particle system of signed Coulomb charges}
\author{Patrick van Meurs%
  \thanks{Faculty of Mathematics and Physics,
  Institute of Science and Engineering,
  Kanazawa University
  Kakuma, Kanazawa, 920-1192,
  Japan; ORCID 0000-0001-8184-5061; \url{pjpvmeurs@staff.kanazawa-u.ac.jp}} 
\and 
Mark A.\ Peletier%
  \thanks{Department of Mathematics and Computer Science, Eindhoven University of Technology, The Netherlands; ORCID 0000-0001-9663-3694; \url{m.a.peletier@tue.nl}}
\and 
Thomas Slangen%
  \thanks{Department of Mathematics and Computer Science, Eindhoven University of Technology, The Netherlands; \url{slangenthomas@gmail.com}}
}
\date{}
\begin{document} 

\maketitle

% \map{Mark's comments} \\
% \pvm{Patrick's comments} \\
% \thomas{Comments from Thomas}\\
% \comm{Additional comments that we want to keep for now, but which we remove later. The black text should be self-consistent when the orange comments are removed.}

\begin{abstract}
We study a system of stochastic differential equations with singular drift which describes the dynamics of signed particles in two dimensions interacting by the Coulomb potential. In contrast to the well-studied cases of identical particles that either all repel each other or all attract each other, this system contains both `positive' and `negative' particles. Equal signs repel and opposite signs attract each other; apart from the sign, the potential is the same. 

We derive results on well-posedness of the system, on the type of collisions that can occur, and on the mean-field limit as the number of particles tends to infinity. Our results demonstrate that the signed system shares features of both the fully repulsive and the fully attractive cases. Our proof method is inspired by the work of Fournier and Jourdain (The Annals of Applied Probability, 27, pp.~2807-2861, 2017) on the fully attractive case; we construct an approximate system of equations, establish uniform estimates, and use tightness to pass to limits.
\end{abstract}
{\textbf{Keywords}}: {Stochastic particle system, Coulomb interactions, Bessel process.}

\noindent{\textbf{MSC}}: {65C35, 35K55, 60H10.}
%copied from FJ17
%Here is the MSC2020 meaning of these codes:
%65C35 Stochastic particle methods
%35K55 Nonlinear parabolic equations
%60H10 Stochastic ordinary differential equations (aspects of stochastic analysis)

%\tableofcontents

\section{Introduction}
\label{s:intro}

We study the interacting particle system given by the set of SDEs
\begin{equation} \label{SDE:full:param}
  d X_t^i = \sigma dB_t^i + \gamma \sum_{j =1 }^N b^i b^j K(X_t^i - X_t^j) dt, \qquad i = 1,\ldots,N,
\end{equation}
where $\bX_t = (X_t^1, \ldots, X_t^N) \in (\R^2)^N$ is the list of the positions in $\R^2$ of each of the $N$ particles, $B_i$ are independent $2$-dimensional Brownian motions, and 
\[
  K : \R^2 \setminus \{0\} \to \R^2, \qquad K(x) = \frac x{|x|^2} = - \nabla \log \frac1{|x|}
\]
is the repulsive Coulomb force. We set $K(0) := 0$ for technical reasons, such as not having to exclude particle self-interactions in the summation in \eqref{SDE:full:param}. $\sigma, \gamma, N > 0$ are parameters; $\sigma$ is the temperature and $\gamma$ is the strength of the interactions. Finally, $b^i \in \{-1,+1\}$ is the fixed sign of particle $i$. Depending on which model \eqref{SDE:full:param} is used for, $b^i$ could be the electric charge of a particle, the rotation direction of a vortex, the orientation of a screw dislocation etc.

Our motivation for studying \eqref{SDE:full:param} is that it is at the frontier of what is known in the literature on similar particle systems. A detailed description follows; here we briefly summarize the state of the art:
\begin{itemize}
  \item for interaction forces which are less singular than $K$ (commonly called ``sub-Coulomb"), a lot is known about well-posedness of solutions, properties thereof and the mean-field limit as $N \to \infty$.
  \item for the `fully repulsive' case (i.e.\ \eqref{SDE:full:param} with $b^i = 1$ for all $i$), it is known that particles do not collide, and that similar properties hold as in the previous bullet.
  \item for the `fully attractive' case (not covered by \eqref{SDE:full:param}, but obtained from it by replacing $b^i b^j$ by $-1$), it is known that particle collisions occur with positive probability. In fact, there is a delicate dependence on the parameters $\sigma, \gamma, N$ and the properties of the solution. It depends on these parameters whether weak solutions exist and how many particles can collide.
\end{itemize}
Our `signed' particle system \eqref{SDE:full:param} sits in the middle between the fully repulsive and fully attractive systems. Yet, we are not aware of any rigorous study on it. Our aim is therefore to start such a study. In particular, we are interested in the following questions: 
\comm{["we are not aware" is backed by the following: [CourcelRosenzweigSerfaty23ArXiv] do fully attractive, and lists the intros of [Ser20, NRS22, dCRS23] for overview on related models, and [CD21, Gol22] for surveys. But, [Ser20, NRS22, dCRS23] don't cite FJ17 or CP16, which makes it reasonable to assume that our signed case is not covered yet by this community.]}
\begin{enumerate}
  \item Do (weak) solutions exist?
  \item Is there a sense of uniqueness for solutions?
  \item (a) Do collisions occur? (b) If so, how many particles can collide with each other at the same time? 
  \item Can we establish a mean-field limit as $N \to \infty$?
\end{enumerate} 
The question of uniqueness is very subtle; Fournier and Jourdain~\cite{FournierJourdain17} discuss this in some detail for the fully attractive case, and we have nothing new to add about this. For the other three questions we provide partial answers in the rest of this paper. 

\subsection{Applications} 

In general, particle systems with \textit{signed} particles are ubiquitous in physics: they may represent, for instance, electrons, protons, electrically charged particles, vortices and dislocations. The particle system \eqref{SDE:full:param} is closely related to models for vortices and dislocations. In the case of vortices, the only difference is that $K$ is commonly rotated by $90^\circ$ (Biot-Savart law). In the case of dislocations, \eqref{SDE:full:param} can be of direct use; we motivate this in more detail next. 

Dislocations are line defects in the ($3$-dimensional) crystallographic lattice of a metal; see the textbooks \cite{HirthLothe82,HullBacon11}. Their collective dynamics is the main driving mechanism of plasticity. However, deriving plasticity models from dislocation dynamics is a longstanding open problem. The main reason for this is that the interaction between dislocation lines is nonlocal and singular. This has led to the commonly used modelling assumption that all dislocations are straight and parallel, so that the dislocations can be characterized as points in a $2$-dimensional cross section; see e.g.\ \cite{HudsonVanMeursPeletier20} and the literature overview therein. \eqref{SDE:full:param} is one of the possible descriptions of such a system. The noise in \eqref{SDE:full:param} is an idealized model for the dynamics generated by the thermal vibrations of the atoms around the dislocations. The main goal for the application to dislocations is to understand the properties of \eqref{SDE:full:param} and to pass to the limit $N \to \infty$. These goals are aligned with the aim of this paper.

\subsection{Related literature} 

The system \eqref{SDE:full:param} is interesting because it addresses precisely the critical Coulomb case. Indeed, many results (including much stronger ones than what we hope for in the present paper) hold for interaction forces with singularities strictly weaker than the Coulomb one; see \cite{JabinWang17} for a review and the recent works \cite{JabinWang18} on vortices, \cite{RosenzweigSerfaty23} for convergence rates as $N \to \infty$, \cite{BermanOnnheim19} on propagation of chaos, and~\cite{HoeksemaHoldingMaurelliTse24} on large deviations. In the present paper we will see that the Coulomb singularity is indeed critical in the sense that the system has qualitatively different behaviour depending on the parameters  $\sigma, \gamma$, and $N$.

Within the literature on Coulomb interactions, the best understood scenario is the fully repulsive case. For this case, in the deterministic setting (i.e.\ $\sigma = 0$) Serfaty and Duerinckx manage in \cite{Duerinckx16,DuerinckxSerfaty20} to establish the mean-field limit $N \to \infty$. For the stochastic setting $\sigma > 0$, \cite{LiuYang16} establishes existence and uniqueness of strong solutions and proves the convergence of it to the mean-field limit. These results were extended to three dimensions in \cite{LiLiuYu19}. When a quadratic confinement potential is added, the system is studied in random matrix theory; see e.g.\ \cite{BolleyChafaiFontbona18}.  
\comm{[ \\ 
LY16: see Thm 2.1 on p.4 and Thm 5.2 on p.31 \\
LLY19: See Thm 2.1 on p.6 (although they say it follows from LY16) and Thm 4.1 on p.23 \\
BCF18: They show that no collisions occur and that strong solutions exist with pathwise uniqueness; all thm 1.1. Section 1.4.6 says something about $N \to \infty$, but a clear MF limit seems out of reach for them.
]}

In the fully attractive case, the particle system is given by
\begin{equation} \label{SDE:FJ:sigma}
  d X_t^i = \sigma dB_t^i - \gamma \sum_{j =1 }^N K(X_t^i - X_t^j) dt, \qquad i = 1,\ldots,N.
\end{equation}
This system is delicate in the sense that the Coulomb attraction is critical with respect to the Brownian noise. Indeed, the Brownian motion causes $X^i$ and $X^j$ to scatter at a rate of order $O(|X^i - X^j|^{-1/2})$, whereas the interaction force attracts them towards each other at a rate of the same order. Thus, one may expect that the behavior of \eqref{SDE:FJ:sigma} depends critically on the values of the parameters $\sigma, \gamma , N$, and this is indeed what has been discovered.

Regardless of the values of the parameters, collisions occur with positive probabilty. Hence, the singularity of $K$ at $0$ is visited, which complicates existence and uniqueness of solutions to \eqref{SDE:FJ:sigma}. Despite these challenges, \cite{FournierJourdain17} and \cite{CattiauxPedeches16} have established existence of weak solutions for which the drift is a.s.\ integrable in time. They also prove that particle collisions happen with positive probability, and establish a mean-field limit; each of these results is limited to a region of the parameter space $(\sigma, \gamma , N)$ in which $\gamma$ is sufficiently small with respect to $\sigma$ and $N$. \cite{BreschJabinWang23}  extend parts of these results to more general potentials $K$,
\cite{YangMin21}  to higher spatial dimensions for the particle positions, while
\cite{CourcelRosenzweigSerfaty23ArXiv} extends the mean-field limit to a quantitative propagation of chaos result, and 
\cite{FournierTardy23ArXiv} extends the solution concept to a weaker one based on Dirichlet forms in which collisions between 3 or more particles can be captured. 

\subsection{Main results} 

Our main results give partial answers to the four questions posed above. Before giving these answers, we note that by rescaling time in \eqref{SDE:full:param} we can reduce the parameters $(\sigma, \gamma)$ to the single parameter $\gamma \sigma^{-2}$ without changing the qualitative behaviour of the system (see Section~\ref{s:intro:scal}). Consequently, our main results depend only on $(\sigma, \gamma)$ through the value of $\gamma \sigma^{-2}$. Consequently, our main results depend only on $(\sigma, \gamma)$ through the value of $\gamma \sigma^{-2}$, or put differently, it is only the relative size of $\gamma$ and $\sigma^2$ that matters.

Formally, the answers to our four questions are as follows (see the cited theorems for rigorous statements):
\begin{enumerate}[label=(\roman*)] % p.47.bot 
  \item \label{res:for:exist} (Theorem \ref{t:ex}) If $\gamma < \frac12 \sigma^2$, then for any $N \geq 1$ a global weak solution $\bX_t$ exists with the following properties:
  \begin{itemize}
      \item along $\bX_t$ the drift is integrable in time, and
      \item all collisions that happen along $\bX_t$ are simple, i.e.\ collisions of two particles of opposite sign.
    \end{itemize}  
  \item \label{res:for:pos:prob} (Theorem \ref{t:coll}) For any choice of $\sigma, \gamma , N$, if a weak solution contains at least two particles of opposite sign, then on any open time interval there is a nonzero chance that a collision happens.
  \item \label{res:for:mf} (Theorem \ref{t:mf-limit}) If $\gamma = \gamma_N$ with $N \gamma_N \to \overline \gamma  < \frac12 \sigma^2$ as $N \to \infty$, then the following mean-field limit holds: for a sequence $(\bX^N)_N$ of weak solutions with corresponding signs $(\bb^N)_N$, the empirical measures
  \[
    L^{N,+} := \frac1N \sum_{i :\, b^{N,i} = +1} \!\!\!\!\!\delta_{X^{N,i}},
    \qquad   
    L^{N,-} := \frac1N \sum_{i :\, b^{N,i} = -1} \!\!\!\!\!\delta_{X^{N,i}}
  \]
  converge along a subsequence to densities $\rho^+$ and $\rho^-$ which satisfy 
\begin{equation} \label{GB:diffu}
   \left\{ \begin{aligned}
      &\partial_t \rho^+_t + \ol\gamma \div \bigl[\rho^+_t K{*}(\rho^+_t-\rho^-_t)\bigr] = \sigma \Delta \rho^+_t,\\
  &\partial_t \rho^-_t - \ol\gamma \div \bigl[\rho^-_t K{*}(\rho^+_t-\rho^-_t)\bigr] = \sigma \Delta \rho^-_t,
    \end{aligned} \right. 
  \end{equation}   
where ``$*$" denotes the convolution in $\R^2$ over the spatial variable. 
\end{enumerate}   
These results reveal the following properties of \eqref{SDE:full:param}:
\begin{itemize}
  \item The existence result and the mean-field limit require the interaction strength $\gamma$ to be small enough with respect to the temperature $\sigma$. This leans in the parameter space $(\sigma, \gamma , N)$ towards the sub-Coulomb case, and overlaps with the part of the parameter space in which the results of \cite{FournierJourdain17} for the fully attractive case hold (see below).
  
  \item The required upper bound in \ref{res:for:mf} on $\gamma$ of order $O(N^{-1})$ is the natural scaling for the mean-field limit in \eqref{GB:diffu} in which both the interaction and diffusion appear. Hence, the required upper bound in \ref{res:for:exist} on $\gamma$ is rather weak by comparison. 
  
  \item The result \ref{res:for:pos:prob} complements \ref{res:for:exist} by stating that collisions occur with nonzero probability. Since the drift has to be integrable when $\gamma < \frac12 \sigma^2$, this demonstrates that particles instantaneously separate after collision, and that they scatter sufficiently fast after collision. Moreover, \ref{res:for:pos:prob} holds on the full parameter space and for any possible weak solution.
\end{itemize}

\subsection{Outline of the proof}
We largely follow the proofs in \cite{FournierJourdain17}. 
% In more detail, we prove \ref{res:for:pos:prob} independently from \ref{res:for:exist} and \ref{res:for:mf}. \map{Aren't all three proofs independent? Or do you mean that the a priori estimates are used for both (i) and (iii)?} 
To prove \ref{res:for:pos:prob}, we consider those two particles $X^i$ and $X^j$ of opposite sign which are initially closest together, use Girsanov's Theorem to construct a probability space in which $X^i$ and $X^j$ evolve independently of the other particles until a certain finite stopping time $\tau$, and then show that $X^i$ and $X^j$ collide with positive probability before time~$\tau$. Yang and Min~\cite{YangMin21} apply an alternative, faster proof method  based on the equation satisfied by $\sum_{i=1}^N |X^i_t|^2$. In our setting, however, this proof method only works in the part of the parameter space where $\gamma>\sigma^2$. 
 
When proving \ref{res:for:exist}, the main concern is the integrability of the drift. We obtain it by proving that the drift is bounded in expectation. The method follows that of~\cite{FournierJourdain17} and is based on the following formal computation. Applying It\^o's lemma without considering the differentiability requirements, we get for the distance between two particles (for simplicity, we take particle labels $1$ and $2$) 
\begin{align*}
  d |X_t^1 - X_t^2|
  &= \sigma \frac{X_t^1 - X_t^2}{|X_t^1 - X_t^2|} d(B_t^1 - B_t^2)
    + \frac{\sigma^2 + (b^1 b^2) 2 \gamma}{|X_t^1 - X_t^2|} dt \\
  & \quad + \gamma \frac{X_t^1 - X_t^2}{|X_t^1 - X_t^2|} \sum_{k=3}^N \big( b^1 b^k K(X_t^1 - X_t^k) - b^2 b^k K(X_t^2 - X_t^k) \big) dt.
\end{align*}
The second term on the right-hand side describes the interaction between $X^1$ and $X^2$, including the It\^o correction. By taking the expectation and integrating in time, this term is part of the expectation of the drift which we need to bound. In addition, it has a prefactor of $\sigma^2 + (b^1 b^2) 2 \gamma$. We need this prefactor to be positive irrespective of $b^1 b^2$; this motivates the required bound in \ref{res:for:exist}. In addition, we need to control the term with all the other interactions. With several manipulations this can be done  with $N$-dependent constants. For the mean-field result in \ref{res:for:mf}, however, we need an estimate which is independent of $N$. Then, we need to take away a large bite from the prefactor $\sigma^2 + (b^1 b^2) 2 \gamma$, which leads to the stronger requirement in \ref{res:for:mf}.

To make this procedure rigorous, we introduce a regularized version of \eqref{SDE:full:param} for which  the above computations are allowed and provide an a priori estimate on the drift. Then, we can pass to the limit as the regularization parameter vanishes.

\subsection{Discussion}

While we largely follow the proofs in \cite{FournierJourdain17}, we stress that the introduction of signs in this paper creates a fundamental difference in behaviour. Figure~\ref{fig:difference-grav-signed} ill`ustrates the essence of this difference. In the left part the close pair of two particles exerts twice the attractive force on a third particle that a single particle does; this causes particle clustering (see \cite{FournierJourdain17,FournierTardy23ArXiv} for a detailed discussion). In the right part, the close pair of two particles hardly exerts any force on a third particle. Indeed, to leading order the pairwise interaction forces cancel, causing a net zero attraction. Then, the scattering of the Brownian motion dominates, and no clustering occurs. This fundamental difference results in a much larger parameter range (namely $\gamma < \frac12 \sigma^2$) for the global well-posedness of \eqref{SDE:full:param} than the parameter range 
\begin{equation} \label{gam:range:FJ}
  \gamma 
  \leq 2\sigma^2 \frac{N-2}{N(N-1)}
  = \frac2N \sigma^2 (1 + O(\frac1N))
\end{equation}
for global well-posedness in the fully attractive case. To verify \eqref{gam:range:FJ}, we note that \cite{FournierJourdain17} takes
\[
   \sigma = \sqrt 2, \qquad \gamma = \frac\chi{2\pi N}
\]
and requires for global existence in \cite[Theorem 7]{FournierJourdain17} that 
$\chi \leq 8\pi \frac{N-2}{N-1}$. Recalling that the effective parameter is $\gamma / \sigma^2$, we obtain \eqref{gam:range:FJ}.

\begin{figure}[ht]
  \newcommand{\subfigwidth}{5cm}
  \newcommand{\figwidth}{4cm}
\centering
\begin{tikzpicture}[scale = 1]
    \def \r {.25}
    \def \lgray {black!20!white}
    
    %\draw[orange] (0,0) grid (13,5);
    
    %%% left figure
    
    %X^3
    \begin{scope}[shift={(0,0)}] 
        \draw (0, 0) circle (\r); 
        \fill (0, 0) circle (.4*\r); 
        \draw (0,\r) node[above]{$X^3$};
	\end{scope}
	
	%X^1
	\begin{scope}[rotate = 7] 
        \draw[dotted] (\r,0) -- (2.5 - \r,0); 
        \draw[->, very thick] (\r,0) -- (2,0);
	    \begin{scope}[shift={(2.5,0)}] 
	        \draw (0, 0) circle (\r); 
	        \fill (0, 0) circle (.4*\r); 
	        \draw (\r,0) node[right]{$X^1$};
		\end{scope}        
	\end{scope}
	
	%X^2
	\begin{scope}[rotate = -7] 
	    \draw[dotted] (\r,0) -- (2.8 - \r,0); 
        \draw[->, very thick] (\r,0) -- (1.8,0); 
	    \begin{scope}[shift={(2.8,0)}] 
	        \draw (0, 0) circle (\r); 
	        \fill (0, 0) circle (.4*\r); 
	        \draw (\r,0) node[right]{$X^2$};
		\end{scope}        
	\end{scope}
	
	%text
	\draw (2.65/2, -.7) node[below] {\begin{tabular}{c} Fully attractive case: \\ $X^3$ is doubly attracted \end{tabular}};
    
    %%% right figure
    
    \begin{scope}[shift={(7,0)}]
	    %X^3
	    \begin{scope}[shift={(0,0)}] 
	        \draw[red] (0, 0) circle (\r); 
	        \draw[red] (-.6*\r,0) -- (.6*\r,0);
	        \draw[red] (0,-.6*\r) -- (0,.6*\r);
	        \draw (0,\r) node[above]{$X^3$};
		\end{scope}
		
		%X^1
		\begin{scope}[rotate = 7] 
	        \draw[dotted] (\r,0) -- (2.5 - \r,0); 
	        \draw[->, very thick] (\r,0) -- (2,0);
		    \begin{scope}[shift={(2.5,0)}, rotate = -7] 
		        \draw[blue] (0, 0) circle (\r);
		        \draw[blue] (-.6*\r,0) -- (.6*\r,0);
		        \draw (\r,0) node[right]{$X^1$};
			\end{scope}        
		\end{scope}
		
		%X^2
		\begin{scope}[rotate = -7] 
		    \draw[dotted] (\r,0) -- (2.8 - \r,0); 
	        \draw[->, very thick] (-\r,0) -- (-1.8,0); 
		    \begin{scope}[shift={(2.8,0)}, rotate = 7] 
		        \draw[red] (0, 0) circle (\r); 
		        \draw[red] (-.6*\r,0) -- (.6*\r,0);
		        \draw[red] (0,-.6*\r) -- (0,.6*\r);
		        \draw (\r,0) node[right]{$X^2$};
			\end{scope}        
		\end{scope}
		
		%text
	    \draw (2.65/2, -.7) node[below] {\begin{tabular}{c} Signed case: \\ $X^3$ is hardly attracted \end{tabular}};
    \end{scope}
\end{tikzpicture}
  \caption{Difference between the fully attractive case and the signed case. Close pairs are stronger attractors than single particles in the fully attractive case, but have hardly any effect in the signed case. }
  \label{fig:difference-grav-signed}
\end{figure}
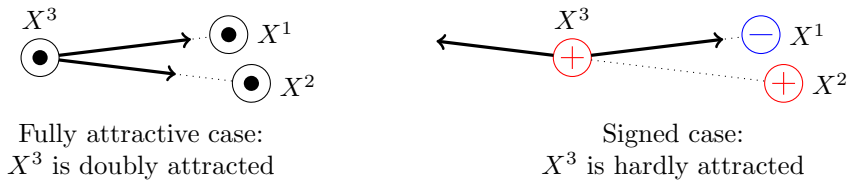

Next, we use our results to answer parts of the four questions raised above:
\begin{itemize} 
  \item Answer to question 3(a): if a weak solution exists and if there are at least two particles with opposite sign, then collisions occur with positive probability anywhere in the parameter space $(\sigma, \gamma , N)$.
  
  \item Answers to questions 1 and 3(b): if $\gamma < \frac12 \sigma^2$, then a weak solution exists for all time. Moreover, this weak solution only exhibits collisions between precisely two particles of opposite sign.
  
  \item Answer to question 4: if $\gamma = \gamma_N$ with $\lim_{N \to \infty} N \gamma < \frac12 \sigma^2$, then \eqref{SDE:full:param} possesses a mean-field limit.
\end{itemize} 
With respect to the literature on the fully repulsive and fully attractive cases, these answers reveal that \eqref{SDE:full:param} shares features of both cases. Indeed, if there are at least two particles with opposite sign, then collisions occur with positive probability, which is a distinct feature of the fully attractive case. Yet, no other type of particle collisions occur, which is a distinct feature of the fully repulsive case.

\subsection{Outlook}

We intend our results on \eqref{SDE:full:param} to pave the way to find more complete answers to the four questions. Here we describe three directions which may lead to these answers.

\paragraph{Mean-field limit for $\frac1N \ll \gamma \ll 1$.} In the range $\frac1N \ll \gamma \ll 1$ we have global existence of weak solutions, but no mean-field limit result. Upon rescaling \eqref{SDE:full:param} such that the prefactor of the drift changes from $\gamma$ to $\frac1N$, the prefactor of the noise becomes $\sigma / \sqrt{N \gamma}$ (see Section \ref{s:intro:scal}), and we expect the mean-field limit (if it exists) to be
\begin{equation} \label{GB}
  \left\{ \begin{aligned}
      &\partial_t \rho^+_t + \div \bigl[\rho^+_t K{*}(\rho^+_t-\rho^-_t)\bigr] = 0,\\
  &\partial_t \rho^-_t - \div \bigl[\rho^-_t K{*}(\rho^+_t-\rho^-_t)\bigr] = 0.
    \end{aligned} \right.
\end{equation}
In the literature on dislocations, these equations were first formally derived in \cite{GromaBalogh99} from \eqref{SDE:full:param} with $\sigma=0$. System \eqref{GB} serves as the basis for more advanced models developed since; see \cite{GromaZaiserIspanovity16}  and the references therein. A rigorous solution concept to \eqref{GB} has been developed in \cite{CannoneElHajjMonneauRibaud10}. In \cite{GarroniVanMeursPeletierScardia19} these equations appear as the many-particle limit from a modified version of \eqref{SDE:full:param} which is deterministic (i.e.\ $\sigma = 0$) and in which $K$ is regularized by a parameter which vanishes as $N \to \infty$. Hence, it would be interesting to discover whether \eqref{GB} can appear as a mean-field limit of \eqref{SDE:full:param}.
\comm{[GZI16: They add yet again a new term to \eqref{GB}, and refer to earlier upgrades to \eqref{GB} by themselves and others]}

\paragraph{Beyond $\gamma < \frac12 \sigma^2$.} The main reason that we do not have any results for $\gamma \geq \frac12 \sigma^2$ is the loss of integrability of the drift. The same issue was noted in \cite{FournierJourdain17}. However, for the special case of two particles (i.e.\ $N=2$) this issue can be side-stepped; see \cite[Theorem 19]{FournierJourdain17}. We revisit these results here for three reasons:
\begin{enumerate}
  \item the system \eqref{SDE:full:param} coincides with the fully attractive case \eqref{SDE:FJ:sigma} for $N=2$ and $b^2 = - b^1$,
  
  \item we rely on the two-particle system in our proof of Theorem \ref{t:coll}, and
  
  \item we expect the two-particle system to be a prototype for handling the collisions in \eqref{SDE:full:param}, which appear to be two-particle collisions only.
\end{enumerate}
For the two-particle system $(X^1, X^2)$, the distance $Y := |X^1 - X^2|$ between the particles is a Bessel process. For Bessel processes there is an established sense for global existence and uniqueness of solutions. \cite[Theorem 19]{FournierJourdain17} uses this concept to reconstruct from $Y$ a certain solution $(X^1,X^2)$, say a ``Bessel-solution", of the two-particle system. The Bessel-solutions need not have an integrable drift, and therefore it need not satisfy the SDE system \eqref{SDE:full:param} in the classical sense. We revisit the properties of Bessel-solutions in detail in Sections~\ref{s:intro:Bessel} and~\ref{s:intro:N2}. Here, we briefly note that:
\begin{itemize}
  \item there is a sense of uniqueness for Bessel-solutions, whereas we are not aware of any uniqueness result on weak solutions of \eqref{SDE:full:param},
  \item the value $\gamma = \frac12 \sigma^2$ is \emph{not} critical for the properties of Bessel-solutions. The only feature which changes is that for $\gamma > \frac12 \sigma^2$ the drift is indeed not integrable on $[\tau, \tau + h]$ a.s., where~$\tau$ is a collision time and $h > 0$ is arbitrary and fixed,
  \item the value $\gamma = \sigma^2$ is critical for whether $X^1$ and $X^2$ separate instantaneously after a collision. For $\gamma \geq \sigma^2$, the Bessel-solution does not separate after the first  collision, say at $\tau$, but satisfies $X_t^1 = X_t^2$ for all $t \geq \tau$. 
\end{itemize}

This sparks the question of whether \eqref{SDE:full:param} can be recast into a form for which Bessel processes can be used to construct solutions. We leave this for future research. 

\paragraph{Other collision rules.} At least for  $\gamma < \frac12 \sigma^2$ the formulation of \eqref{SDE:full:param} implicitly encodes that particles immediately separate after collision. It depends on the application whether this is a desired modeling choice. For instance, in the application to dislocations~(see e.g.\ \cite{Groma97,RoyPeerlingsGeersKasyanyuk08,GeersPeerlingsPeletierScardia13,ScardiaPeerlingsPeletierGeers14,ChapmanXiangZhu15}) it is common to represent systems of straight and parallel edge dislocations by points in a cross-sectional plane. From this point of view it can be natural to remove particles of opposite sign upon collision (this is the basis for the analysis in \cite{PatriziSangsawang21,VanMeursMorandotti19,VanMeursPeletierPozar22,PatriziVanMeurs24}). On the other hand, one might also argue that the assumption of straight and parallel dislocations is an idealisation, and in reality thermal fluctuations prevent annihilation from happening.

In this paper we do not consider any such annihilation rule. Nevertheless, equipping \eqref{SDE:full:param} with an annihilation rule makes the question of the existence of a mean-field limit very interesting. Indeed, a possible limiting equation was studied in \cite{AmbrosioMaininiSerfaty11}, but within those results it remains unclear how much of the particle density is actually annihilated. 

Another reason for considering annihilation as the collision rule is that it naturally matches with the speculated Bessel-solutions of \eqref{SDE:full:param} in the case $\gamma \geq \sigma^2$. Indeed, in this case, if two particles collide, say $X^1$ and $X^2$, then they stick together. Since both particles have opposite sign, their contribution to the drift on each of the other particles $X^3,\ldots,X^N$ vanishes. Moreover, the drift on $X^1$ is the opposite of the drift on $X^2$, and thus the pair $X^1 =  X^2$ behaves as an independent Brownian motion. Hence, $X^1$ (and $X^2$) will not collide with any other particle a.s., and thus the system behaves as if $X^1$ and $X^2$ have been removed. 
\smallskip

The  paper is organized as follows. In Section \ref{s:prel} we provide the preliminaries, which most notably recall some properties of Bessel processes and solve \eqref{SDE:full:param} in the special case of two particles with opposite sign. Sections \ref{s:positive-probability-of-collisions}, \ref{s:global-existence} and \ref{s:MFlimit} concern the statements and proofs of our main three results, Theorems \ref{t:coll}, \ref{t:ex} and \ref{t:mf-limit}, respectively.

\section{Preliminaries}
\label{s:prel}  

We start by making precise several formal descriptions from the introduction. By convention we take $(\Omega, \cF, \P)$ to be a probability space which is large enough to accommodate countably many independent processes of the form \eqref{SDE:full:param}. Whenever no further properties of this space are required, we do not mention it in our analysis.

Next, the precise description of \eqref{SDE:full:param} is the `integrated version' given by
\begin{equation} \label{SDE:integrated}
  X_t^i = X_0^i + \sigma \int_0^t dB_s^i + \gamma \sum_{j =1 }^N b^i b^j \int_0^t K(X_s^i - X_s^j) ds, \qquad i = 1,\ldots,N,
\end{equation}
where $\bX_0$ is a given (random) initial condition.

We use two descriptions for the signed particles. The first is as in the introduction, where we describe a particle in terms of its index $i$ and its position $X^i \in \R^2$. The sign is retrieved from the fixed list $\bb$ by $b^i \in \{\pm 1\}$. We use this description to prove our main Theorems \ref{t:coll} and \ref{t:ex} in which $N$ is fixed. In the second description a signed particle is the position-sign couple
\begin{equation*}
  Y^i := (X^i, b^i) \in \R^2 \times \{\pm 1\} =: \R_\pm^2.
\end{equation*}
In this description we interpret $b^i \in \{\pm 1\}$ as a random variable which is fixed in time. We use this description to prove the mean-field limit in Theorem \ref{t:mf-limit}.

The concept of exchangeability mentioned in the introduction fits naturally to the random-$\bb$ description; a list of signed particles $\bY \in (\R_\pm^2)^N$ is exchangeable if $P_{\bY} = P_{\bY^s}$ for any permutation~$s$ ($(\bY^s)^i := Y^{s(i)}$). In the deterministic-$\bb$ description, we use a modified notion of exchangeability: given $\bb \in \{\pm 1\}^N$, we say that $\bX$ is exchangeable with respect to $\bb$ if $P_{\bX} = P_{\bX^s}$ for any permutation~$s$ which satisfies $\bb = \bb^s$.
 
Next we introduce the weak solution concept to solutions of \eqref{SDE:full:param} that we use in the remainder of the paper. We state it in the deterministic-$\bb$ description; the version for the random-$\bb$ description follows from it with obvious modifications.
 
\begin{defn}[Weak solution] \label{d:wSol}
Let $T > 0$, $\bX_0 \in \R^{2n}$ and $\bb \in \{\pm 1\}^N$ be given. An $\R^{2N}$-valued process $\bX$ is a weak solution to the SDE in \eqref{SDE:full:param} on $[0,T)$ with initial condition $\bX_0$ if there exists a $2N$-dimensional Brownian motion $\bB$ such that
\begin{itemize}
  \item $\bB$ and $\bX$ are adapted to the same filtration $\cF_t$,
  \item $\bX$ is path-continuous a.s., 
  \item $\displaystyle \int_0^t \sum_{i=1}^N \bigg| \sum_{j=1}^N K(X_s^i - X_s^j) \bigg| \, ds < \infty$ a.s.\ for all $t \in(0, T)$, and
  \item the integrated SDE in \eqref{SDE:integrated} holds a.s. 
\end{itemize}
\end{defn} 
 
\begin{prop}[Quick observations] \label{p:obv}
If $\bX$ is a weak solution to \eqref{SDE:full:param}, then:
\begin{enumerate}[label=(\roman*)]
   \item \label{p:obv:exch} exchangeability of $\bX$ with respect to $\bb$ is conserved in time,
   \item \label{p:obv:bflip} $\bX$ remains a weak solution to \eqref{SDE:full:param} if $\bb$ is replaced by $-\bb$,
   \item \label{p:obv:relab} for any permutation $s$, $\bX^s$ is a weak solution to \eqref{SDE:full:param} if $\bX_0$ and $\bb$ are replaced by $\bX_0^s$ and~$\bb^s$.
\end{enumerate}
\end{prop}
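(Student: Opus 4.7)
The plan is to verify each of the three claims by checking that every bullet of Definition~\ref{d:wSol} survives the indicated transformation of the data; each claim rests on an elementary algebraic symmetry of the drift, and no analytic work is needed beyond a clean bookkeeping.

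Item~(ii) is immediate from the identity $(-b^i)(-b^j) = b^i b^j$: the integrated equation~\eqref{SDE:integrated} is unaltered by the flip $\bb\mapsto -\bb$, so the same driver $\bB$ and filtration $\cF_t$ continue to witness $\bX$ as a weak solution with signs $-\bb$, and the drift-integrability bound and path-continuity are unaffected.

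For item~(iii), I would set $\tilde X^i := X^{s(i)}$ and $\tilde B^i := B^{s(i)}$; then $\tilde\bB$ is a $2N$-dimensional Brownian motion adapted to the same filtration $\cF_t$, since permuting independent coordinates preserves both the covariance structure and the adaptedness. Substituting into~\eqref{SDE:integrated} at index $s(i)$ and relabeling the summation index $j\mapsto s(k)$ yields
\[
  \tilde X^i_t = (\bX_0^s)^i + \sigma \tilde B^i_t + \gamma \sum_{k=1}^N (\bb^s)^i (\bb^s)^k \int_0^t K(\tilde X^i_r - \tilde X^k_r)\,dr,
\]
which is exactly the integrated SDE with signs $\bb^s$ and initial condition $\bX_0^s$. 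The drift-integrability bound in Definition~\ref{d:wSol} is a sum over all $(i,j)$ and is therefore invariant under the fixed relabeling $s$, and path-continuity is preserved coordinatewise.

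Item~(i) I would deduce by combining~(iii) with the exchangeability of the driver~$\bB$. Fix a permutation $s$ with $\bb^s = \bb$; by~(iii), $\bX^s$ is a weak solution of the same SDE (signs $\bb^s = \bb$) driven by $\bB^s$ and started from $\bX_0^s$. Since the components of $\bB$ are i.i.d.\ Brownian motions one has $\bB \stackrel{d}{=} \bB^s$, and if $\bX_0$ is exchangeable w.r.t.\ $\bb$ one also has $\bX_0 \stackrel{d}{=} \bX_0^s$; hence the data pair $(\bX_0,\bB)$ is equidistributed with $(\bX_0^s,\bB^s)$. Because the permutation action commutes with the integrated SDE (that is the content of~(iii)), this equality of input laws transports to the output, giving $P_{\bX_t} = P_{\bX^s_t}$ for every $t$, which is the stated conservation. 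The main point to flag is that this argument uses only the equivariance of the solution relation under the permutation action; it never appeals to weak uniqueness, which is indeed unavailable in our setting.
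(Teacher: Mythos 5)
The paper states Proposition~\ref{p:obv} without any proof, so there is no argument of the authors to compare against; judging your proposal on its own merits: items~\ref{p:obv:bflip} and~\ref{p:obv:relab} are correct and are exactly the intended bookkeeping. The sign flip leaves $b^i b^j$ invariant, and for a permutation the substitution $j=s(k)$, together with the fact that $\bB^s$ is again a $2N$-dimensional Brownian motion for the same filtration, turns \eqref{SDE:integrated} at index $s(i)$ into the integrated SDE for $(\bX^s,\bX_0^s,\bb^s)$; the integrability condition in Definition~\ref{d:wSol} is a sum over all pairs and hence permutation-invariant.

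Item~\ref{p:obv:exch} is where there is a genuine gap, and it sits precisely at the step you flag as unproblematic. From~(iii) you obtain that $\bX^s$ is a weak solution driven by $(\bX_0^s,\bB^s)$ and that $(\bX_0^s,\bB^s)\stackrel{d}{=}(\bX_0,\bB)$. But equality of the laws of the \emph{inputs} does not transport to equality of the laws of the \emph{outputs} unless the solution is a measurable functional of the inputs (a strong solution) or uniqueness in law holds: a weak solution is a relation, not a map, and the conditional law of $\bX$ given $(\bX_0,\bB)$ --- the ``selection rule'' resolving possible non-uniqueness --- could in principle depend on the particle labels. Formally, all that~(iii) gives you is $P_{(\bX_0^s,\bB^s,\bX^s)}=(s,s,s)_\ast P_{(\bX_0,\bB,\bX)}$, whose third marginal is $s_\ast P_{\bX}$; deducing $s_\ast P_{\bX}=P_{\bX}$ from the equality of the first two marginals is exactly the statement to be proved, so the argument is circular without an extra ingredient. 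Your closing remark that the argument ``never appeals to weak uniqueness'' is therefore the opposite of the truth: the transport step is an implicit appeal to it, and the paper explicitly notes that uniqueness is open for \eqref{SDE:full:param}. The standard ways to close the gap are to invoke uniqueness in law (unavailable here), to work with strong solutions, or --- as the paper effectively does for the solutions it actually constructs --- to obtain exchangeability by inheritance from the regularized systems in Lemmas~\ref{l:ex:apri} and~\ref{l:eto0}, where the coefficients are Lipschitz, strong uniqueness holds, and your symmetry argument becomes rigorous. As stated for an arbitrary weak solution, item~(i) should be read with this caveat, and your write-up should at least acknowledge the missing uniqueness input rather than disclaim it.
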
 

\subsection{Scaling invariance of the particle system}
\label{s:intro:scal}

Consider \eqref{SDE:full:param}. Scaling space by a length scale $L > 0$ and time by a time scale $\alpha$, we set
\[
  \bY_t := \frac1L \bX_{t/\alpha}.
\]  
Then, since $\tilde \bB_t := \sqrt \alpha \bB_{t/\alpha}$ is a $2N$-dimensional Brownian motion and $K(x/L) = L K(x)$, we get
\begin{equation} \label{scaled:SDE} % p.22
  dY_t^i = \frac \sigma{ \sqrt{ \alpha L^2 } } d \tilde B_t^i + \frac{\gamma}{\alpha L^2} \sum_{j =1 }^N b^i b^j K(Y_t^i - Y_t^j) dt.
\end{equation}
Note that the right-hand side only depends on $\alpha$ and $L$ through the factor $\alpha L^2$. This shows that the system \eqref{SDE:full:param} has the following scale invariance: scaling space by a factor $\beta > 0$ leads to the same result as scaling time by a factor $\beta^2$. We often use this to scale space such that in \eqref{SDE:full:param} $\sigma$ and $\gamma$ are respectively replaced by $\sigma / \sqrt \gamma$ and $1$. 
\comm{[In other words, in parameter space $(\sigma, \gamma)$, along the curves defined by $\gamma = C \sigma^2$ for some fixed constant $C > 0$, the processes are equivalent up to a rescaling.]} 

\subsection{Squared Bessel processes}
\label{s:intro:Bessel} 

Bessel processes are well studied; see e.g.\ \cite{Katori16,RevuzYor99}. For the statements in this section we refer to \cite[Chapter XI]{RevuzYor99} unless mentioned otherwise.

Let $\delta \in \R$ be a parameter, $x \geq 0$ a starting position, and $\beta$ a $1$-dimensional Brownian Motion. For $\delta \geq 0$, the SDE
\begin{equation} \label{R:Bessel:SDE}
  dR_t = 2 \sqrt{ R_t } d \beta_t + \delta dt, \qquad R_0 = x
\end{equation}
has a unique strong solution $R$. % RY05 refers for this to Chap IX Sect 3
It is called a squared Bessel process of dimension $\delta$ started at $x$. 
The typical example of a squared Bessel process of integer dimension $\delta = n \in \N$ is $| B |^2$, where $B$ is an $n$-dimensional Brownian motion. 

The following lemma describes the well-posedness  for all $\delta$, positive and negative.

\begin{lem}[Squared Bessel processes in positive and negative dimension; {\cite[\S XI.1]{RevuzYor99}}]
  \label{l:sqBessel}
  \noindent
\begin{enumerate}
\item   \label{i:l:sqBessel:pos-dim}
For $\delta\geq0$ strong solutions of \eqref{R:Bessel:SDE}  exist and are unique. 
\item  For $\delta<0$ strong solutions of \eqref{R:Bessel:SDE}  exist and are unique up to the stopping time $\tau = \inf \{ t \geq 0 \mid R_t = 0 \}$. If we understand strong solutions for $\delta<0$ to be frozen when they hit zero, then this type of solutions is unique.
\label{i:l:sqBessel:neg-dim}
\end{enumerate}
\end{lem}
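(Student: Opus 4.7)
The statement is classical and fully proved in \cite[Chapter XI]{RevuzYor99}; here I outline the approach I would follow. For part~\ref{i:l:sqBessel:pos-dim} ($\delta \geq 0$), the plan is to establish pathwise uniqueness and weak existence separately, and then invoke the Yamada--Watanabe theorem to obtain a unique strong solution. Pathwise uniqueness is the subtle point, because the diffusion coefficient $r\mapsto 2\sqrt{r}$ fails to be Lipschitz at $r=0$. The Yamada--Watanabe criterion applies nonetheless: the inequality $|2\sqrt{r}-2\sqrt{s}|^{2}\leq 4|r-s|$ holds, the drift is constant, and the resulting modulus $\rho(u)=2\sqrt{u}$ satisfies $\int_{0+}\rho(u)^{-2}\,du=\infty$. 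For weak existence, I would regularize by replacing $\sqrt{r}$ with $\sqrt{(r)_{+}+1/n}$ (which is globally Lipschitz), solve the regularized SDE strongly to obtain $R^{n}$, derive uniform $L^{p}$ bounds on $R^{n}_{t}$ by applying It\^o's formula to $(R^{n}_{t})^{p}$, and extract a weakly convergent subsequence via a Kolmogorov-type tightness criterion. Any limit point solves \eqref{R:Bessel:SDE}, and Yamada--Watanabe then promotes weak existence to a unique strong solution.

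For part~\ref{i:l:sqBessel:neg-dim} ($\delta<0$), the coefficients of \eqref{R:Bessel:SDE} are smooth on $(0,\infty)$, so the plan is to localize. On the event $\{R_{t}\geq 1/n\}$ the coefficients are Lipschitz, so classical SDE theory produces a unique strong solution up to the stopping time $\tau_{n}:=\inf\{t\geq 0:R_{t}\leq 1/n\}$. Compatibility of these solutions across $n$, which follows from the same Lipschitz pathwise uniqueness, stitches them into a unique solution on $[0,\tau)$ with $\tau:=\sup_{n}\tau_{n}=\inf\{t\geq 0:R_{t}=0\}$. Extending by $R_{t}:=0$ for $t\geq\tau$ gives the frozen process, and uniqueness within the declared class of frozen extensions is then tautological.

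The only non-routine step is the lack of Lipschitz regularity of the diffusion coefficient in case~\ref{i:l:sqBessel:pos-dim}; this is precisely what Yamada--Watanabe is designed to address, and everything else reduces to standard stochastic calculus.
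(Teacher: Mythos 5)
Your proposal is correct, and it reproduces the standard argument: the paper offers no proof of its own for this lemma, citing \cite[\S XI.1]{RevuzYor99} instead, and your sketch (Yamada--Watanabe pathwise uniqueness via the H\"older-$\tfrac12$ modulus of $r\mapsto 2\sqrt r$ combined with weak existence by regularization for $\delta\geq 0$; localization away from $0$ and freezing at the hitting time for $\delta<0$) is exactly the route taken in that reference. Nothing further is needed.
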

\noindent

%\medskip

As illustrated by the lemma above, squared Bessel processes may hit zero, or not. Loosely speaking,
\begin{enumerate} 
  \item if $\delta \geq 2$ and $x > 0$, then $R$ never hits $0$,
  \item if $\delta < 2$, then with positive probability $R$ hits $0$ on any open time interval. When it hits $0$, say at time $t$, then
  \begin{enumerate}
    \item if $\delta > 0$, then it leaves $0$ instantaneously,
    \item if $\delta \leq 0$, then it is stuck at $0$ on $[t,\infty)$.
  \end{enumerate}
\end{enumerate}
The following theorem makes these statements rigorous.

\begin{thm}[Zero-hitting properties] \label{t:BP}
Let $R$ be a squared Bessel process of dimension~$\delta \in \R$. Then $R$ is a continuous process which satisfies the following properties:
\begin{enumerate}[label=(\roman*)]
  \item \label{t:BP:geq2} If $\delta \geq 2$, then for every $x > 0$ we have\/ $\P (\forall \, t > 0 : R_t > 0 \,|\,R_0 = x) = 1$.
  \item \label{t:BP:less2} Let $a, T > 0$ and assume that $\P(R_0 < a) > 0$. If $\delta < 2$, then 
  \[
    \P \Big( \min_{0 \leq t \leq T} R_t = 0, \ \max_{0 \leq t \leq T} R_t < a \Big) > 0.
  \]
\end{enumerate}
\end{thm}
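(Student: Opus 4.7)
My plan is to handle the two parts separately using scale function arguments applied to~\eqref{R:Bessel:SDE}.

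For part~(i), I would introduce the scale function $s(x) := x^{1-\delta/2}$ when $\delta > 2$ and $s(x) := -\log x$ when $\delta = 2$. A short It\^o computation on~\eqref{R:Bessel:SDE} confirms that the drift contributions cancel exactly, so $s(R_{t \wedge \tau_0})$ is a local martingale on the stochastic interval before the hitting time $\tau_0 := \inf\{t \geq 0 : R_t = 0\}$, and the key feature is that $s(0^+) = +\infty$ in both cases. Fixing $R_0 = x > 0$ and applying optional sampling to $s(R_{\cdot \wedge \tau_\varepsilon \wedge \tau_M})$ for $0 < \varepsilon < x < M$, where $\tau_\varepsilon$ and $\tau_M$ denote the first entrance times of the levels $\varepsilon$ and $M$, gives the classical hitting-probability formula
\[
  \P_x(\tau_\varepsilon < \tau_M) \;=\; \frac{s(x) - s(M)}{s(\varepsilon) - s(M)}.
\]
Sending $\varepsilon \to 0$ kills this probability because $s(\varepsilon) \to \infty$, and sending $M \to \infty$ then yields $\P_x(\tau_0 = \infty) = 1$.

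For part~(ii) (so $\delta < 2$) the same scale function $s(x) = x^{1-\delta/2}$ is now continuous on $[0, a]$ with $s(0) = 0$. First, since $\P(R_0 < a) > 0$, continuity from below lets me choose $0 < \alpha < \beta < a$ with $\P(R_0 \in [\alpha, \beta]) > 0$. Conditioning on this event and applying optional sampling at $\tau_0 \wedge \tau_a$ yields
\[
  \P_x(\tau_0 < \tau_a) \;=\; 1 - (x/a)^{1-\delta/2} \;>\; 0 \qquad \text{for every } x \in [\alpha, \beta],
\]
which already places us on an event where $\max_{[0, \tau_0]} R_t < a$.

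The harder step is upgrading this to the \emph{finite-time} statement $\P_x(\tau_0 \leq T \wedge \tau_a) > 0$ for arbitrary $T > 0$. Here I would invoke the scaling identity $R^{(x)}_t \stackrel{d}{=} x R^{(1)}_{t/x}$ (proved exactly as in Section~\ref{s:intro:scal}) to reduce to a squared Bessel process started from~$1$; this, combined with recurrence of $0$ in dimension $\delta<2$ (so $\P_1(\tau_0 < \infty) = 1$) and strict positivity of the transition density on $(0, \infty) \times (0, \infty)$, yields $\P_1(\tau_0 \leq s) > 0$ for every $s > 0$, and hence $\P_x(\tau_0 \leq T \wedge \tau_a) > 0$. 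Finally, to control the path on $[\tau_0, T]$ I apply the strong Markov property at $\tau_0$: for $\delta \leq 0$ the process is frozen at $0$ by Lemma~\ref{l:sqBessel}(ii) and nothing remains to prove, while for $0 < \delta < 2$ the restarted process is a squared Bessel of the same dimension from~$0$, for which the event $\{\max_{[0, T]} R_t < a\}$ has positive probability by another scale-function computation on $[0, a]$. The main obstacle is precisely this time upgrade, since the scale function alone is silent on the time horizon: bridging it cleanly requires the scaling identity together with basic regularity of the Bessel semigroup.
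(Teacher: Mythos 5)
First, note that the paper does not prove Theorem~\ref{t:BP} at all: it cites part~\ref{t:BP:geq2} from \cite[Th.~1.1(i)]{Katori16} and part~\ref{t:BP:less2} from Step~5 of the proof of \cite[Proposition~4]{FournierJourdain17}. So you are supplying a self-contained argument where the paper supplies references. Your treatment of part~\ref{t:BP:geq2} is correct and standard: $s(x)=x^{1-\delta/2}$ (resp.\ $-\log x$) is indeed harmonic for the generator $2x\partial_x^2+\delta\partial_x$, the stopped process is a bounded martingale on $[\varepsilon,M]$, and $s(0^+)=+\infty$ forces $\P_x(\tau_0<\tau_M)=0$; letting $M\to\infty$ finishes it.

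For part~\ref{t:BP:less2} there is a genuine gap, and it sits exactly where you flag "the harder step". From $\P_x(\tau_0\leq T)>0$ (via scaling, recurrence and the transition density) and $\P_x(\tau_0<\tau_a)>0$ (via the scale function) you conclude "hence $\P_x(\tau_0\leq T\wedge\tau_a)>0$"; this is a non sequitur, since positivity of two events does not give positivity of their intersection --- a priori the paths that reach $0$ before time $T$ could all first excurse above $a$. One way to close it: first hit a small level $\eta$ before $a$ and before time $T/2$ (positive probability because on $[\eta,a]$ the diffusion is uniformly elliptic, so a support-theorem or coupling argument applies), then use the strong Markov property together with the quantitative comparison $\P_y(\tau_0\leq T/2\wedge\tau_a)\geq \P_y(\tau_0\leq T/2)-\P_y(\tau_a<\tau_0)=\P_1(\tau_0\leq T/(2y))-(y/a)^{1-\delta/2}$, which is positive for $y$ small by recurrence and scaling. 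A second, smaller defect is the final step for $0<\delta<2$: you claim $\P_0\bigl(\max_{[0,T']}R_t<a\bigr)>0$ by "another scale-function computation", but scale functions are blind to the time variable (indeed $\P_0(\tau_a<\infty)=1$ for these dimensions, so the purely spatial statement is false and only the finite-horizon one can hold); here you need a different tool, e.g.\ the comparison theorem bounding $R$ by a squared Bessel process of dimension $2$ driven by the same noise, which is $|W|^2$ for a planar Brownian motion $W$ and visibly stays below $a$ up to time $T'$ with positive probability, or the explicit transition density.
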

\noindent
Part~\ref{t:BP:geq2} of Theorem~\ref{t:BP} is \cite[Th.~1.1(i)]{Katori16}, and part \ref{t:BP:less2} is proven in Step 5 of the proof of \cite[Proposition 4]{FournierJourdain17}.

\subsection{The case of two particles}
\label{s:intro:N2}

As described in the Introduction, the special case of two particles has a close connection to Bessel processes. 
We refer to \cite[Section 6]{FournierJourdain17} for proofs of the statements in this section. 

For two particles the SDE system  \eqref{SDE:full:param} becomes
\begin{align} \label{SDE:N2:full:param} 
  \left\{ \begin{aligned}
    d X_t^1 &= \sigma dB_t^1 - \gamma K(X_t^1 - X_t^2) dt, \\
    d X_t^2 &= \sigma dB_t^2 + \gamma K(X_t^1 - X_t^2) dt.  
  \end{aligned} \right.  
\end{align}
Here $\gamma>0$ corresponds to particles of opposite sign and  $\gamma < 0$ of same sign. Also here it is not clear whether the drift is integrable. In fact, for $\gamma > \frac12 \sigma^2$, the drift $\gamma |K(X_t^1 - X_t^2)|$ is a.s.\  not integrable on $(\tau, \tau + h)$, where $h > 0$ and $\tau$ is a collision time.

However, for any choice of $\sigma > 0$ and $\gamma \in \R$ it is possible to construct a meaningful solution concept by changing variables. Let $(X_t^1, X_t^2)$ be the strong solution of \eqref{SDE:N2:full:param} up to the first collision time $\tau$. Let $S := X^1 + X^2$ be the sum  and  $D := X^1 - X^2$ be the difference. Then,
\begin{align} \label{SDE:SD} 
  \left\{ \begin{aligned}
    d S_t &= \sigma d(B_t^1 + B_t^2), \\
    d D_t &= \sigma d(B_t^1 - B_t^2) - 2 \gamma K(D_t) dt.
  \end{aligned} \right.  
\end{align}
Note that the processes $S$ and $D$ are independent and that $(S_t - S_0)/(\sigma \sqrt 2)$ is a Brownian motion. 

The equation for $D$ still has a singular drift, and we therefore apply a further change of variables. The squared distance $R := |D|^2/(2\sigma^2)$ satisfies the SDE
\begin{equation}
  \label{eq:SDE-squared-Bessel-process}
  dR_t = 2 \Big( 1 - \frac\gamma{\sigma^2} \Big) dt + 2 \sqrt{ R_t} d \beta_t,
\end{equation}
where
\[
  \beta_t := \int_0^t \frac{D_s}{|D_s|} \cdot d W_s, 
  \qquad W := \frac{B^1 - B^2}{\sqrt 2} ,
\]
in which $\beta$ is a $1$-dimensional and $W$ a $2$-dimensional Brownian Motion.
Therefore $R$ is a squared Bessel process of dimension $\delta = 2 ( 1 - \frac\gamma{\sigma^2} )<2$. We will use the properties of objects such as $R$ in the proofs of Theorems~\ref{t:coll} and~\ref{t:ex} below.

Fournier and Jourdain~\cite[Theorem 19]{FournierJourdain17}  study yet another change of variables,  $Z := |D|^2 D$ (with inverse $D = |Z|^{-2/3} Z$). They show that $Z$ satisfies an SDE in $\R^2$ with locally bounded drift and degenerate diffusion term. Neither of the two is Lipschitz continuous at $Z = 0$, but \cite[Theorem 19]{FournierJourdain17} guarantees the existence of a global weak solution. For $\gamma > \sigma^2$ the solution exists only up to the first time $\tau$ at which $Z$ hits $0$; after that the process is frozen, which is consistent with the second part of Lemma~\ref{l:sqBessel}.

In conclusion, with processes $S$ and $Z$ obtained above, we can define a notion of solutions $(X_1, X_2)$ to \eqref{SDE:N2:full:param} (referred to as ``Bessel-solutions" in the Introduction) by inverting the changes of variables as follows: take $D := |Z|^{-2/3} Z$, $X_1 := \frac12(S + D)$ and $X_2 := \frac12(S - D)$.

\medskip

The properties of squared Bessel processes mentioned in Section \ref{s:intro:Bessel} give rise to corresponding properties for $(X^1, X^2)$. Figure \ref{fig:BessSol} illustrates these properties. 
\begin{enumerate}
  \item If both particles have the same sign (i.e.\ $\gamma \leq 0$), then $X^1$ and $X^2$ a.s.\ do not collide in finite time.
  \item If $X^1$ and $X^2$ have opposite sign (i.e.\ $\gamma > 0$), then on any open time interval a collision occurs with nonzero probability. Moreover, 
  \begin{enumerate}
    \item if $\gamma < \frac12 \sigma^2 $, then $(X^1, X^2)$ is a weak solution to \eqref{SDE:N2:full:param} in the sense of Definition \ref{d:wSol}, but if $\gamma > \frac12 \sigma^2 $, it is not,
    \item if $\gamma < \sigma^2 $, then $X^1$ and $X^2$ separate instantaneously after a collision occurs, but if  $\gamma \geq  \sigma^2 $, then  $X^1$ and $X^2$ remain stuck together from time $t$ onwards. 
  \end{enumerate}
\end{enumerate}
The regime $\gamma\geq \sigma^2$, in which opposite-sign particles collide and remain stuck together, has a natural interpretation in the context of dislocations: it corresponds to the collision of two defects with opposite signs, which then annihilate each other.  
\comm{Remark: For $D_t$, the radial component $|D_t|$ can be computed indep-y from the angular component, but the angular component depends on $|D_t|$.} 

\begin{figure}[h!]
  \centering
  \begin{tikzpicture}[scale = 1]
    \def \r {.5}
    \def \lgray {black!20!white}
    
    %\draw[orange] (0,0) grid (13,5);
    
    %left figure
    \draw[<->] (0,5) node[left]{$\sigma$} -- (0,0) -- (5,0) node[below]{$\gamma$};
    \draw[domain=0:3, smooth, white!20!gray, dashed] plot ({5*\x*\x/9},\x);
    \draw[domain=0:5, smooth, thick] plot ({\x*\x/5},\x);
    \draw (5,5) node[right]{$\gamma = \frac12\sigma^2 $};
    \draw[white!20!gray] (5,3) node[right]{$\gamma = \sigma^2$};
 
    \draw (1.5,4.5) node{\begin{tabular}{c} weak solution \\ $(X^1, X^2)$ \end{tabular} };
    \draw (3,2) node{\begin{tabular}{c} Bessel-solution \\ $(X^1, X^2)$ \end{tabular}};
    
    %right figure
    \begin{scope}[shift={(8,0)}]
	    \draw[<->] (0,5) node[left]{$\sigma$} -- (0,0) -- (5,0) node[below]{$\gamma$};
	    \draw[domain=0:3, smooth, thick] plot ({5*\x*\x/9},\x);
	    \draw[domain=0:5, smooth, white!20!gray, dashed] plot ({\x*\x/5},\x);
	    \draw (5,3) node[right]{$\gamma = \sigma^2$};   
	    \draw[white!20!gray] (5,5) node[right]{$\gamma = \frac12\sigma^2 $};
	    
	    %non sticky collision
	    \begin{scope}[shift={(1,3)}, scale = .5] 
		    \begin{scope}[shift={(2,1.5)}]
		      \tstar{.5*\r}{1*\r}{10}{11}{fill=yellow}
		    \end{scope}
		    
		    \foreach \point in {(0,0), (4,3)}{
		    \begin{scope}[shift=(\point),scale=1] 
		        \draw[red] (0, 0) circle (\r); 
		        \draw[red] (-.6*\r,0) -- (.6*\r,0);
		        \draw[red] (0,-.6*\r) -- (0,.6*\r); 
		    \end{scope}
		    }
		    
		    \foreach \point in {(0,3), (4,0)}{
		    \begin{scope}[shift=(\point),scale=1] 
		        \draw[blue] (0, 0) circle (\r);
		        \draw[blue] (-.6*\r,0) -- (.6*\r,0);
		    \end{scope}
		    }  
		    
		    \draw[->,red] (.6,.1) to [out=15,in=240, looseness=1] (1.7, .9);
		    \draw[->,blue] (.6,2.9) to [out=345,in=120, looseness=1] (1.7, 2.1);
		    \draw[<-,blue] (3.4,.1) to [out=165,in=300, looseness=1] (2.3, .9);
		    \draw[<-,red] (3.4,2.9) to [out=195,in=60, looseness=1] (2.3, 2.1);
	    \end{scope}	   
	     
	    %sticky collision    
	    \begin{scope}[shift={(3.3,.4)}, scale = .5] 
		    \begin{scope}[shift={(2,1.5)}]
		      \tstar{.5*\r}{1*\r}{10}{11}{fill=yellow}
		    \end{scope}
		    
		    \foreach \point in {(0,0)}{
		    \begin{scope}[shift=(\point),scale=1] 
		        \draw[red] (0, 0) circle (\r); 
		        \draw[red] (-.6*\r,0) -- (.6*\r,0);
		        \draw[red] (0,-.6*\r) -- (0,.6*\r); 
		    \end{scope}
		    }
		    
		    \foreach \point in {(0,3)}{
		    \begin{scope}[shift=(\point),scale=1] 
		        \draw[blue] (0, 0) circle (\r);
		        \draw[blue] (-.6*\r,0) -- (.6*\r,0);
		    \end{scope}
		    } 
		    
		    \foreach \point in {(4,1.5)}{
		    \begin{scope}[shift=(\point),scale=1] 
		        \draw[green!70!black] (0, 0) circle (\r); 
		        \draw[green!70!black] (0, 0) circle (.4*\r);
		    \end{scope}
		    } 
		    
		    \draw[->,red] (.6,.1) to [out=15,in=240, looseness=1] (1.7, .9);
		    \draw[->,blue] (.6,2.9) to [out=345,in=120, looseness=1] (1.7, 2.1);
		    \draw[->,green!70!black] (2.6,1.5) --++ (.8, 0);
	    \end{scope}  
    \end{scope}    
\end{tikzpicture}
  \caption{Areas in the parameter space $(\sigma,\gamma)$ where weak solutions exist (left) and where sticky collisions occur (right).}
  \label{fig:BessSol}
\end{figure}
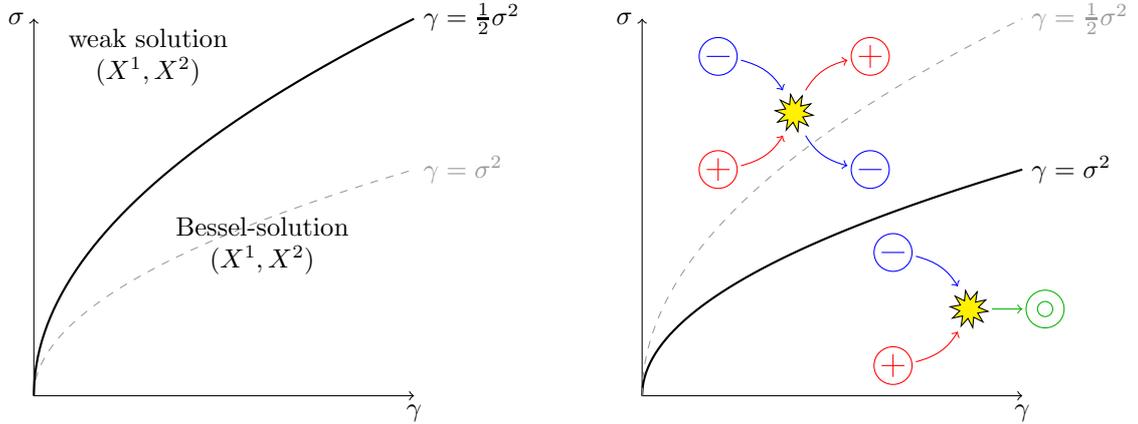

\newpage

\section{Collisions happen with positive probability}
\label{s:positive-probability-of-collisions}

In this section we state and prove Theorem \ref{t:coll} on the possibility of particle collisions.
For a weak solution $\bX$ to \eqref{SDE:full:param}, let 
\begin{equation} \label{Dt}
  D_t := \min_{i \neq j} |X_t^i - X_t^j|  
 \end{equation}
be the shortest distance between any two particles.

%We consider the SDE system \eqref{SDE:pf}, which is defined in integrated form as
%\[
%  X_t^i = X_0^i + \sigma B_t^i + \sum_{j =1 }^N b^i b^j \int_0^t K(X_s^i - X_s^j) ds,
%  \qquad i = 1,\ldots,n.
%\]

\begin{thm}[Collisions happen with positive probability] \label{t:coll} 
Let $N \geq 2$, $T, \sigma, \gamma > 0$, $\bb \in \{-1,1\}^N$ such that $b^i = -b^j$ for some $i,j$, and $\bX_0$ exchangeable with respect to $\bb$ (see Section \ref{s:prel}). Suppose that there exists a weak solution $\bX$ of \eqref{SDE:full:param} on $[0,T]$. Then
\begin{align*}
  \P \Big( \inf_{0 \leq t \leq T} D_t = 0 \Big) > 0.
\end{align*}  
\end{thm}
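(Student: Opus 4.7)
Following the sketch in Section~1.4, the plan is to use Girsanov's theorem to isolate the dynamics of a single opposite-sign pair, and then apply the Bessel-process machinery of Section~\ref{s:intro:N2} to force a collision of that pair with positive probability. If $\P(D_0=0)>0$ the conclusion is immediate, so assume $\P(D_0>0)=1$. By Proposition~\ref{p:obv}\ref{p:obv:relab} applied with an appropriate permutation $s$, together with the exchangeability of $\bX_0$ with respect to $\bb$, I may relabel so that $b^1=+1$ and $b^2=-1$ and focus on the pair $(X^1,X^2)$.

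\textbf{Girsanov and Bessel.} Fix $\epsilon\in(0,1)$ and $M>0$, and introduce the stopping time
\[
  \tau := T \wedge \inf\Bigl\{ t \geq 0 :\ \min_{3\leq k\leq N}\bigl( |X^1_t - X^k_t| \wedge |X^2_t - X^k_t| \bigr) \leq \epsilon \ \text{ or }\ |X^1_t| + |X^2_t| \geq M \Bigr\}.
\]
On $[0,\tau]$ the drift contribution from $X^3,\ldots,X^N$ to the equations for $(X^1,X^2)$ is bounded by a constant depending only on $N,\gamma,\epsilon$, so Novikov's condition is satisfied and Girsanov's theorem produces an equivalent measure $\tilde\P$ on $\cF_\tau$ under which, on $[0,\tau]$, the pair $(X^1,X^2)$ is a strong solution of the two-particle opposite-sign system \eqref{SDE:N2:full:param} driven by Brownian motions $\tilde B^1,\tilde B^2$. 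By Section~\ref{s:intro:N2}, the rescaled squared distance $R_t:=|X^1_t-X^2_t|^2/(2\sigma^2)$ is then, on $[0,\tau]$, a squared Bessel process of dimension $\delta=2(1-\gamma/\sigma^2)<2$. Since $\bX_0\in\R^{2N}$ a.s., one can pick $a>0$ with $\P(R_0<a)>0$; Theorem~\ref{t:BP}\ref{t:BP:less2} then yields a set of positive $\tilde\P$-probability on which $R$ hits $0$ on $[0,T]$ while $\max_{0\leq t\leq T} R_t<a$, in particular confining $(X^1,X^2)$ to a compact set.

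\textbf{Closing argument and main obstacle.} It remains to guarantee that the Bessel collision actually occurs on $[0,\tau]$, i.e.\ that $X^3,\ldots,X^N$ do not enter the $\epsilon$-neighborhood of $(X^1,X^2)$ before $R$ reaches $0$, and that $|X^1|+|X^2|<M$ throughout. The $M$-threshold is easily handled by choosing $M$ large with respect to $a$. The main obstacle is controlling the auxiliary particles: under $\tilde\P$ they still satisfy singular SDEs coupled to $X^1,X^2$, and one needs a positive-probability event on which they stay at distance $>\epsilon$ from the focal pair throughout the collision window. I expect this to be achieved by intersecting with a further $\cF_0$-event (secured via exchangeability) placing the auxiliary particles far from the pair, and then invoking a continuity/Brownian-motion estimate; a cleaner alternative is to simultaneously Girsanov-remove the drifts of \emph{all} $N$ particles on a shorter stopping interval on which every pairwise distance exceeds $\epsilon$, so that under the new measure $X^3,\ldots,X^N$ are independent Brownian motions and a standard first-exit estimate applies. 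Once the requisite event is secured it lies in $\cF_\tau$, and the equivalence $\tilde\P\sim\P$ on $\cF_\tau$ transfers its positive $\tilde\P$-probability back to $\P$, completing the proof.
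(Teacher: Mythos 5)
Your first two steps (Girsanov to decouple the focal pair, then the squared-Bessel representation of $|X^1_t-X^2_t|^2/(2\sigma^2)$) are exactly the paper's strategy. However, the ``closing argument'' that you leave open is where essentially all of the work in the paper's proof lies, and neither of the two routes you sketch closes it. First, Theorem~\ref{t:BP}\ref{t:BP:less2} cannot be applied to your $R$ as defined: $R$ satisfies the squared-Bessel SDE only on the stochastic interval $[0,\tau]$, so the event ``$R$ hits $0$ on $[0,T]$'' is not even well defined. One must extend $R$ (and the centre-of-mass process, and the auxiliary system) to genuine processes on all of $[0,T]$ driven by the same Brownian motions, apply Theorem~\ref{t:BP}\ref{t:BP:less2} to the extension, and then argue that on the relevant event the extension coincides with the original up to the hitting time; this is the paper's Step~4a. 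Second, as $t_0\downarrow 0$ the probability that the (extended) Bessel process hits $0$ before $t_0$ is small and positive, while the probability that the auxiliaries approach within $\epsilon$ is small; an intersection bound of the form $\P(E\cap F)\geq \P(E)-\P(F^c)$ therefore proves nothing, and the confinement event is certainly not $\cF_0$-measurable. The paper resolves this by showing, via an explicit cross-variation computation, that the driving noises $\beta$, $(W^1+W^2)/\sqrt2$, $W^3,\dots,W^N$ are mutually independent under the new measure, so that the three constraining events factorize and the product of their (individually positive) probabilities is positive. Note that even the confinement of the pair itself requires the nontrivial independence of $\beta$ and $(W^1+W^2)/\sqrt2$, both built from $W^1,W^2$. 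Related to this, you must take $(X^1,X^2)$ to be the \emph{closest} opposite-sign pair, not an arbitrary one: the elementary geometric estimate~\eqref{pfzu} then guarantees that every auxiliary particle starts strictly farther from the midpoint $\ol x_0$ than $X^1$ and $X^2$ do, which is what makes it possible to confine the colliding pair to a region the auxiliaries can plausibly avoid. An arbitrary opposite-sign pair may have an auxiliary particle initially closer to $X^1$ than $X^2$ is.

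Your ``cleaner alternative'' is not viable: Girsanov-removing the drifts of \emph{all} $N$ particles also removes the mutual attraction within the pair, so under the new measure $X^1-X^2$ is a scaled planar Brownian motion and $|X^1-X^2|^2/(2\sigma^2)$ is a squared Bessel process of dimension exactly $2$, which by Theorem~\ref{t:BP}\ref{t:BP:geq2} almost surely never hits $0$. The attractive drift inside the pair must be retained; only the coupling between the pair and the remaining particles may be transformed away.
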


\comm{Actually, we would like to prove the stronger statement $\P \Big( \inf_{0 \leq t \leq T} \Dopp_t = 0 \Big) > 0$. However, FJ17's proof does not allow for that. Indeed, when reasoning below by contradiction on the $D_t$-statement, we get for free that no $++$-collisions happen, and we need that in the proof. If we were to work with the $\Dopp_t$ statement instead, then we cannot (as of 18 May 2023) exclude such collisions.
}

\begin{proof}[Proof of Theorem \ref{t:coll}]
The proof is a modification of the proof of \cite[Proposition 4]{FournierJourdain17}. The main difference with the setting in \cite{FournierJourdain17} is that in our setting the new object $\Dopp_t$ appears. This requires a more careful setup in Step 1 below. The other steps are similar. 

Let $(\Omega, \cF, \P)$ be a probability space on which $\bX$ is a weak solution. 
Set
\[
  \Dopp_t := \min_{\substack{ i \neq j \\ b^i = -b^j }} |X_t^i - X_t^j|
\]
as the shortest distance between any two particles of \emph{opposite} sign. Clearly, $\Dopp_t \geq D_t$. Without loss of generality we may assume the following:
\begin{itemize}
  \item $\gamma = 1$ (see Section \ref{s:intro:scal}),
  \item $b^1 = 1 = -b^2$ and $|X_0^1 - X_0^2| = \Dopp_0$ (see Proposition \ref{p:obv}),
  \item $N \geq 3$; for $N=2$ the proof below applies with simplifications,
  \item $\bX_0 = \bx_0$ is deterministic, where $\bx_0$ is such that
% p.23.top
\begin{equation} \label{pfzr}
  D_0 = \min_{i < j} |x_0^i - x_0^j| > 0.
\end{equation}
Indeed, if $D_0 = 0$, then Theorem \ref{t:coll} is trivial.
\end{itemize}
Because of these assumptions, equation~\eqref{SDE:full:param} reads 
\begin{equation} \label{SDE:pf}
  d X_t^i = \sigma dB_t^i + \sum_{j =1 }^N b^i b^j K(X_t^i - X_t^j) dt
\end{equation}
and $D_0, \Dopp_0 > 0$ are deterministic.

We reason by contradiction. Suppose that   
\begin{equation} \label{pfzq}
  \P \Big( \inf_{0 \leq t \leq T} D_t = 0 \Big) = 0. 
\end{equation}
Then, for almost every $\omega \in \Omega$ the integrated SDE in \eqref{SDE:pf} has a solution $\bX(\omega)$ that exists and is unique up to time $T$. 
\comm{[already for weak sol-ns paths are continuous, and thus they say a positive distance away from collisions. Hence, an $\omega$-ball around the singularity is never visited.]}
\smallskip

\textbf{Step 1:} Preliminary estimates for singling out $X^1$ and $X^2$. The aim in this step is to define a stopping time $\tau \in (0, T]$ before which:
\begin{enumerate}
  \item $X_t^1$ and $X_t^2$ are not much further apart than $|x_0^1 - x_0^2| = \Dopp_0$,
  \item the midpoint $\frac12(X_t^1 + X_t^2)$ is close to the initial midpoint
\[
  \ol x_0 := \frac12 (x_0^1 + x_0^2), \quad \text{and}
\] 
  \item $X_t^1$ and $X_t^2$ are closer to $\ol x_0$
than any other particle.
\end{enumerate}
Below, these conditions are captured by the respective stopping times $\tau_1$, $\tau_2$ and $\tau_3$.

By the definitions of $D_0$ and $\Dopp_0$ and elementary trigonometry (see Figure \ref{fig:x1x2}), 
we have that 
\comm{[p.10 top]} 
\begin{equation} \label{pfzu}
  \min_{3 \leq j \leq N} |x_0^j - \ol x_0| 
  \geq |y - \ol x_0|
  = \frac12 \sqrt{ (\Dopp_0)^2 + 2 D_0^2 },
\end{equation}
where the point $y \in \R^2$ is specified in Figure \ref{fig:x1x2} (it is any of the four intersection points of two circles).
Note that 
\[
  \min_{3 \leq j \leq N} |x_0^j - \ol x_0|
  > \frac12 \Dopp_0 = |x_0^1 - \ol x_0| = |x_0^2 - \ol x_0|.
\]
For technical reasons, we take $b>a > \frac12$ such that
\begin{equation} \label{pfzt}
  \frac12 \Dopp_0 < a \Dopp_0 < b \Dopp_0 < \frac12 \sqrt{ (\Dopp_0)^2 + 2 D_0^2 }.
\end{equation}
Using $a,b$, we define $\tau$ as follows:
\begin{align*}
  \tau_1 &:= T \wedge \inf \Big\{ t \in [0,T] : |X_t^1 - X_t^2| \geq \Big( a + \frac12 \Big) \Dopp_0 \Big\}, \\
  \tau_2 &:= T \wedge \inf \Big\{ t \in [0,T] : |X_t^1 + X_t^2 - 2 \ol x_0| \geq \Big( a - \frac12 \Big) \Dopp_0 \Big\}, \\
  \tau_3 &:= T \wedge \inf \Big\{ t \in [0,T] : \min_{3 \leq j \leq N} |X_t^j - \ol x_0| \leq b \Dopp_0 \Big\}, \\
  \tau &:= \min_{1 \leq k \leq 3} \tau_k.
\end{align*}
By construction and the path-continuity of $\bX$, we have $\tau > 0$ a.s. Moreover, for any $3 \leq j \leq N$ we have on $[0,\tau]$
\begin{align*}
  |X_t^1 - X_t^j|
  &\geq |X_t^j - \ol x_0| - \frac12 |X_t^1 + X_t^2 - 2 \ol x_0| - \frac12 |X_t^1 - X_t^2|  \\
  &\geq \Big( b - \frac12\Big( a - \frac12 \Big) - \frac12\Big( a + \frac12 \Big) \Big) \Dopp_0
  = (b-a) \Dopp_0
  > 0.
\end{align*}
A similar estimate holds for $X_t^2$. Thus, there exists a constant $c > 0$ such that
\begin{equation} \label{pfzw}
  \min_{t \in [0,\tau]}\, \min_{i = 1,2}\, \min_{3 \leq j \leq N} |X_t^i - X_t^j| \geq c.
\end{equation}
\smallskip

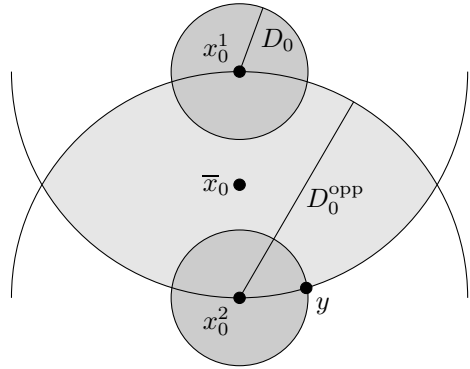
\begin{figure}[h]
\centering
\begin{tikzpicture}[scale=1.5]
  \def \r {.05}   
   
  \begin{scope}[shift={(0,1)},scale=1] 
    \fill[black!10!white] (0,0)-- (210:2) arc (210:330:2) -- cycle; 
  \end{scope}
  
  \begin{scope}[shift={(0,-1)},scale=1] 
    \fill[black!10!white] (0,0)-- (30:2) arc (30:150:2) -- cycle; 
  \end{scope}
  
  \draw[fill = black!20!white] (0,1) circle (.6);
  \draw[fill = black!20!white] (0,-1) circle (.6);  
  
  \begin{scope}[shift={(0,1)},scale=1] 
    \draw (180:2) arc (180:360:2); 
  \end{scope}
  
  \begin{scope}[shift={(0,-1)},scale=1] 
    \draw (0:2) arc (0:180:2); 
  \end{scope}
  
  \draw[fill = black] (0,1) circle (\r) node[above left] {$x_0^1$};
  \draw[fill = black] (0,0) circle (\r) node[left] {$\ol x_0$};
  \draw[fill = black] (0,-1) circle (\r)  node[below left] {$x_0^2$};
  
  \begin{scope}[shift={(0,-1)},rotate=60] 
    \draw[thin] (0,0) -- (2,0) node[midway,right] {$\Dopp_0$}; 
  \end{scope} 
  
  \begin{scope}[shift={(0,1)},rotate=287] 
    \draw [fill = black] (2,0) circle (\r) node[below right] {$y$}; 
  \end{scope}  
  
  \begin{scope}[shift={(0,1)},rotate=70] 
    \draw[thin] (0,0) -- (.6,0) node[midway,right] {$D_0$}; 
  \end{scope} 
  
  %\draw[orange] (-2,-1) grid (2,1);
\end{tikzpicture}
\caption{By the definitions of $D_0$ and $\Dopp_0$, any particle $x_0^j$ other than $x_0^1$ or $x_0^2$ is outside of the gray regions.}
\label{fig:x1x2}
\end{figure} 

\textbf{Step 2:} Decoupling $X^1$ and $X^2$ from the other particles. We use Girsanov's Theorem to remove the dependence between $(X^1, X^2)$ and $(X^3, X^4,\ldots,X^N)$, by changing $\P$ to some equivalent probability measure $\tilde \P$. With this aim, we set 
\comm{[equiv means here  have the same null-sets]}
\[
  Y_t^i := \left\{ \begin{aligned}
    &\frac1{\sigma} \sum_{j = 3}^N b^i b^j K(X_t^i - X_t^j) 
    &&\text{if } i = 1,2,  \\
    &\frac1{\sigma} \sum_{j =1}^2 b^i b^j K(X_t^i - X_t^j) 
    &&\text{if } 3 \leq i \leq N,
  \end{aligned} \right.
  %\qquad \text{for } 1 \leq i \leq N
\]
and write the SDE system \eqref{SDE:pf} as
\begin{align*}
  d X_t^1 &= \sigma dB_t^1 - K(X_t^1 - X_t^2) dt + \sigma Y_t^1 dt, \\
  d X_t^2 &= \sigma dB_t^2 + K(X_t^1 - X_t^2) dt + \sigma Y_t^2 dt, \\
  d X_t^i &= \sigma dB_t^i + \sum_{ j = 3 }^N b^i b^j K(X_t^i - X_t^j) dt + \sigma Y_t^i dt \qquad \text{for all } 3 \leq i \leq N.  
\end{align*} 
To remove the contribution of $\bY$ up to $\tau$, we consider the stopped process $\bY_t^\tau := \bY_{t \wedge \tau}$. By \eqref{pfzw}, there exists $C>0$ such that $\| \bY^\tau \|_{L^2(0,T)}^2 \leq C T$ a.s. Then, by Novikov's criterion, Girsanov's Theorem applies, and we obtain that
\[
  \bW_t := \bB_t + \int_0^t \bY_s^\tau \, ds \qquad \text{for all } t \in [0,T]
\]
defines a Brownian motion  under some $\P_*$ equivalent to $\P$ and adapted to the same filtration $\cF_t$ to which $\bB_t$ is adapted. 
Then, since the stopped process $\bY_t^\tau$ equals $\bY_t$ on $[0,\tau]$, we can rewrite the SDEs above on $[0,\tau]$ as 
\begin{align} \label{pfzp}
  \left\{ \begin{aligned}
    d X_t^1 &= \sigma dW_t^1 - K(X_t^1 - X_t^2) dt \qquad \text{on } [0, \tau] \\
  d X_t^2 &= \sigma dW_t^2 + K(X_t^1 - X_t^2) dt \qquad \text{on } [0, \tau] 
  \end{aligned} \right.  
\end{align}
and 
\begin{equation} \label{pfzy}
  d X_t^i = \sigma dW_t^i + \sum_{ j = 3 }^N b^i b^j K(X_t^i - X_t^j) dt \qquad \text{on $[0, \tau]$ for all } 3 \leq i \leq N. 
\end{equation}
Since under $\P_*$ the $2$-dimensional Brownian motions $W_t^i$ for $i=1,\ldots,N$ are independent, the two systems above decouple, i.e.\ $(X_t^1, X_t^2)$ is independent of $(X_t^3, \ldots, X_t^N)$ on $[0, \tau]$.

\smallskip

\textbf{Step 3:} Conversion to a Bessel process. 
We recall from Section \ref{s:intro:N2} that \eqref{pfzp} can be rewritten in terms of $S_t := X_t^1 + X_t^2$ and $D_t := X_t^1 - X_t^2$ on $[0,\tau]$. Moreover,  
\begin{equation} \label{pfyx}
  S_t = 2 \ol x_0 + \sigma (W_t^1 + W_t^2) \qquad \text{for all } 0 \leq t \leq \tau
\end{equation}
and $R_t := |D_t|^2 / (2 \sigma^2)$ is on $[0,\tau]$ a squared Bessel process of dimension $2(1 - \sigma^{-2})$ as in~\eqref{eq:SDE-squared-Bessel-process}, i.e.
\begin{equation} \label{pfyy}
  d R_t = 2 \sqrt{R_t} d \beta_t + 2 \Big(1 - \frac1{\sigma^2} \Big) \, dt
  \qquad \text{on } [0, \tau],
\end{equation}
where
\begin{equation*}
  \beta_t := \int_0^t \frac{ X_s^1 - X_s^2 }{|X_s^1 - X_s^2|} \cdot d \Big( \frac{ W_s^1 - W_s^2 }{\sqrt 2} \Big)
  \qquad \text{on } [0, T]
\end{equation*}
is a $1$-dimensional Brownian motion.

We claim that the Brownian motions $\beta_t, \widetilde W_t := (W_t^1 + W_t^2)/\sqrt 2, W_t^3, \ldots, W_t^N$ are independent on $[0,T]$ with respect to $\P_*$. To prove this, we note from Levy's theorem (e.g.~\cite[Theorem 3.16]{KaratzasShreve98}) that it is sufficient to show that all cross variations are $0$. Since $W_t^i$ are independent by construction, it is left to show that the cross variation of $\beta_t$ and any of the other $N-1$ Brownian motions is $0$.  

We start with showing that $[\beta_t, \widetilde W_t]$ is $0$ for all $t \in [0,T]$. Writing
\begin{equation*}
  \beta_t = \int_0^t \Sigma_s^\beta \,d \begin{pmatrix} W_s^1\\W_s^2\end{pmatrix}, 
  \qquad       
  \widetilde W_t = \int_0^t \Sigma^W \,d \begin{pmatrix} W_s^1\\W_s^2\end{pmatrix},
\end{equation*}
where
\begin{equation*}
  \Sigma^\beta_s := \frac1{\sqrt 2}\begin{pmatrix} Z_s \\ -Z_s \end{pmatrix}^\top \in \R^{1\times 4}
\quad\text{{and}}\quad
\Sigma^W := \frac1{\sqrt 2} \begin{pmatrix} 1 & 0 & 1 & 0 \\ 0 & 1 & 0 & 1 \end{pmatrix} \in \R^{2\times 4}
\end{equation*}
and
\begin{equation*}
  Z_s := \frac{X_s^1-X_s^2}{|X_s^1-X_s^2|} \in \R^2,
\end{equation*}
we obtain from \cite[Theorem II.29]{Protter04}
that 
\comm{[Theorems II.29 p.75 for simpler processes and IV.12 p.159 for more general ones. T II.29 p.75 needs the $\Sigma$ processes to be caglad and adapted, which they are ($X^i_s$ is even continuous by our contradiction-assumption)]}
\begin{equation*}
  [\beta,\widetilde W]_t 
  = \int_0^t \Sigma^\beta_s (\Sigma^W)^\top\, ds
  = \frac12 \int_0^t  (Z_s^\top - Z_s^\top) \, ds
  = 0
\end{equation*}
for all $t \in [0,T]$.
A similar, simpler computation shows that also $[\beta, W^i]_t = 0$ for all $t \in [0,T]$ and all $3 \leq i \leq N$. This proves the claim.
\smallskip

\textbf{Step 4:} Reaching a contradiction with~\eqref{pfzq}. We have by the equivalence between $\P$ and $\P_*$ that \eqref{pfzq} is equivalent to the statement 
\[
  \P_*\Big( \inf_{0 \leq t \leq T} D_t = 0 \Big) =0.
\]
By reducing the time interval and taking $i=1$ and $j=2$ in the definition of $D_t$ in \eqref{Dt}, we obtain
\[
  \Big\{ \inf_{0 \leq t \leq T} D_t = 0 \Big\} 
  \supset \Big\{ \inf_{0 \leq t \leq t_0} |X_t^1 - X_t^2| = 0 \Big\}
  =: A
\]
for any $t_0 \in (0,T)$. If \eqref{pfzp} would hold with $\tau$ replaced by any $t_0 > 0$, then $|X_t^1 - X_t^2|^2 = 2 \sigma^2 R_t$, and Theorem \ref{t:BP}\ref{t:BP:less2} on Bessel processes (with dimension $\delta = 2(1-\sigma^{-2})<2$) would imply $\P_* (A) > 0$. Then, the contradiction would be reached. Based on this idea, we are going to show that
\begin{equation} \label{pfza}
  \P_* (A \cap \{\tau > t_0\}) > 0
\end{equation}
for $t_0 > 0$ small enough. To prove this we extend in Step 4a the systems \eqref{pfzy}, \eqref{pfyx} and \eqref{pfyy} in time;
% (while it is more natural to extend instead systems \eqref{pfzp} and \eqref{pfzy} in time, this would come with several complications; see Remark \ref{r:coll:pf} below)
we denote the resulting processes respectively by $\tilde X_t^i$ for $3 \leq i \leq N$, $\tilde S$ and $\tilde R$. We then construct three events $\Omega_1$, $\Omega_2$ and  $\Omega_3$ (Step 4b) with the following properties (Steps 4b-4d) when $t_0 > 0$ is small enough:
\begin{enumerate}[label=(\roman*)]
   \item \label{pfyz1} $\Omega_1$ is defined in terms of $\tilde R$, $\Omega_2$ in terms of $\tilde S$ and $\Omega_3$ in terms of $\tilde X^i$  for $3 \leq i \leq N$,
   \item \label{pfyz2} $\P_* (\Omega_k) > 0$ for all $1 \leq k \leq 3$,
   \item \label{pfyz3} The collection $\{\Omega_1,\Omega_2,\Omega_3\}$ is $\P_*$-independent,
   \item \label{pfyz4} For any $1 \leq k \leq 3$, we have on $\Omega_k$ that $\tau_k > t_0 \wedge \tau$ (consequently, $\cap_{k=1}^3 \Omega_k \subset \{ \tau > t_0 \}$), and
   \item \label{pfyz5} $\cap_{k=1}^3 \Omega_k \subset A$.
 \end{enumerate} 
Then, \eqref{pfza} follows from
\[
  \P_* (A \cap \{\tau > t_0\}) 
  \geq \P_* \Big( \bigcap_{k=1}^3 \Omega_k \Big)
  = \prod_{k=1}^3 \P_* ( \Omega_k)
  > 0,
\]
and the contradiction with \eqref{pfzq} is reached.
\smallskip 

\textbf{Step 4a}: The extension in time of \eqref{pfzy}, \eqref{pfyx} and \eqref{pfyy}. Note that $\bX_t$, $\bW_t$, and $\beta_t$ are defined on $[0,T]$. Then, we extend \eqref{pfyx} in time by defining the process
\begin{equation*}
  \tilde S_t = 2 \ol x_0 + \sigma (W_t^1 + W_t^2) \qquad \text{for all } 0 \leq t \leq T.
\end{equation*}
This is an extension of $S_t$ in time in the sense that $\tilde S_t = S_t = X_t^1 + X_t^2$ for $t \in [0,\tau]$. Similarly, we extend \eqref{pfyy} in time by considering
\begin{equation} \label{pfyw}
\left\{ \begin{aligned}
d \tilde R_t &= 2 \sqrt{\tilde R_t} d \beta_t + 2\Big(1 - \frac1{\sigma^2} \Big) \, dt 
    &&\text{on } (0, T] \\
    \tilde R_0 &= R_0 = \frac{|x_1^0 - x_2^0|^2}{2 \sigma^2}.
    &&
\end{aligned} \right.
\end{equation}
By Lemma~\ref{l:sqBessel},  \eqref{pfyw} has a unique strong solution $\tilde R_t$ (stopped at zero, if necessary), which therefore coincides with $R_t$ as long as $R_t >0$, and therefore coincides with $R_t$ for all $t\in [0,\tau]$.

Finally, considering \eqref{pfyy}, the system 
\comm{[equality of strong solutions is per KaratzasShreve's definition of strong solution.]}
\begin{equation} \label{pfzy:ext}
  \left\{ \begin{aligned}
    d \tilde X_t^i 
    &= \sigma dW_t^i + \sum_{ j = 3 }^N b^i b^j K(\tilde X_t^i - \tilde X_t^j) dt 
    && \text{on $(0, \tilde \tau)$ for all } 3 \leq i \leq N \\
    \tilde X_0^i 
    & = X_0^i = x_0^i
    && \text{for all } 3 \leq i \leq N.
  \end{aligned} \right.  
\end{equation}
attains a unique strong solution up to the first collision time capped at $T$ given by
\[
  \tilde \tau := T \wedge \lim_{\ell \to \infty} \inf \Big\{ t \in [0,T] : \min_{3 \leq i < j \leq N} |\tilde X_t^i - \tilde X_t^j| \leq \frac1\ell \Big\}.
\]
Note that $(\tilde X_t^3, \ldots, \tilde X_t^N)$ coincides with $(X_t^3, \ldots, X_t^N)$ on $[0,\tau \wedge \tilde \tau)$. In fact, it can be shown  that $\tilde \tau > \tau$, but we do not need this in the sequel. 
\comm{[Proof of "$\tilde \tau > \tau$": $(X_t^3, \ldots, X_t^N)$ satisfies \eqref{pfzy:ext} on $[0,\tau]$, by the supposed \eqref{pfzq} there are no collisions on $[0,\tau]$, and thus the uniqueness of solutions until the first collisions implies that $(\tilde X_t^3, \ldots, \tilde X_t^N)$ also has to satisfy \eqref{pfzy:ext} until $[0,\tau]$ without collisions a.s.]}

\smallskip

\textbf{Step 4b}: definitions of $\Omega_k$ and Properties \ref{pfyz1}, \ref{pfyz3} and \ref{pfyz5}. We take
\begin{align*}
  \Omega_1 &:= \Big\{ \min_{0 \leq t \leq t_0} \tilde R_t = 0, \ 
                     \max_{0 \leq t \leq t_0} \sqrt{2 \sigma^2 \tilde R_t } < \Big( a + \frac12 \Big) \Dopp_0 \Big\}, \\
  \Omega_2 &:= \Big\{ \max_{0 \leq t \leq t_0} | { \tilde S_t } - 2 \ol x_0 | < \Big( a - \frac12 \Big) \Dopp_0 \Big\}, \\
  \Omega_3 &:= \Big\{ \tilde \tau > t_0, \ \min_{0 \leq t \leq t_0} \min_{3 \leq j \leq N} |\tilde X_t^j - \ol x_0| > b \Dopp_0 \Big\}.
\end{align*}
Note that for each $\Omega_k$ the estimates on $\tilde R$, $\tilde S$ and $\tilde X_i$ are the negation of the estimate in the definition of the corresponding $\tau_k$, {but defined in terms of the extended processes instead of $\bX$.}

Turning to the properties \ref{pfyz1}--\ref{pfyz5} above, first note that property~\ref{pfyz1} is satisfied by construction. Property \ref{pfyz4} implies  \ref{pfyz5}, because if \ref{pfyz4} holds, then on  $\cap_{k=1}^3 \Omega_k$ we have $t_0 < \tau$, hence $\tilde R_t = R_t$ on $[0,t_0]$, and thus $\Omega_1 \subset A$. 
Finally, \ref{pfyz3} follows from \ref{pfyz1} and the fact that the processes  $\tilde R$, $\tilde S$ and $\tilde X_i$ are $\P_*$-independent on $[0,T]$. We next continue with properties~\ref{pfyz2} and~\ref{pfyz4}. 

\textbf{Step 4c}: Property \ref{pfyz2}. By Theorem~\ref{t:BP}\ref{t:BP:less2} we obtain $\P_*(\Omega_1) > 0$. For $\Omega_2$ we simply observe that $\tilde S_t - 2 \ol x_0 = \sigma (W_t^1 + W_t^2)$ is under $\P_*$ a $2$-dimensional Brownian motion multiplied by $\sqrt 2 \sigma$. Hence, $\P_*(\Omega_2) > 0$.

For $\Omega_3$ we note that since it is the intersection of two events, it is sufficient to show that for $t_0 > 0$ small enough both events have $\P_*$-probability larger than $\frac12$. For both events we rely on the path-continuity of the solution to \eqref{pfzy:ext}. For the first event $\{\tilde \tau > t_0\}$, we have by $D_0 > 0$ that $\P_*(\tilde \tau > 0) = 1$, and thus for $t_0$ small enough we have  $\P_*(\tilde \tau > t_0) > \frac12$. Similarly, for the second event, we recall from \eqref{pfzu} and \eqref{pfzt} that
\[
  \min_{3 \leq j \leq N} |x_0^j - \ol x_0| > b \Dopp_0.
\] 
Hence, for $t_0$ small enough 
\[
  \P_* \Big( \min_{0 \leq t \leq t_0} \min_{3 \leq j \leq N} |\tilde X_t^j - \ol x_0| > b \Dopp_0 \Big) > \frac12.
\]

\textbf{Step 4d}: Property \ref{pfyz4}. We rely on the overlap of the extended processes $\tilde R$, $\tilde S$ and $\tilde X_i$ with the original processes $ R$, $ S$ and $ X_i$ on $[0,\tau]$, and use a similar argument for each $\Omega_k$. On $\Omega_1$, we have
\[
  \Big( a + \frac12 \Big) \Dopp_0
  > \max_{0 \leq t \leq t_0} \sqrt{2 \sigma^2 \tilde R_t } 
  \geq \max_{0 \leq t \leq t_0 \wedge \tau} \sqrt{2 \sigma^2 \tilde R_t } 
  = \max_{0 \leq t \leq t_0 \wedge \tau} \sqrt{2 \sigma^2 R_t }
  = \max_{0 \leq t \leq t_0 \wedge \tau} | X_t^1 -  X_t^2|.
\]
Hence, by the definition of $\tau_1$, we obtain on $\Omega_1$ that $\tau_1 > t_0 \wedge \tau$. 
On $\Omega_2$, we have similarly
\[
  \Big( a - \frac12 \Big) \Dopp_0
  > \max_{0 \leq t \leq t_0 \wedge \tau} | \tilde S_t - 2 \ol x_0|
  = \max_{0 \leq t \leq t_0 \wedge \tau} | X_t^1 + X_t^2 - 2 \ol x_0|
\]
and thus $\tau_2 > t_0 \wedge \tau$. On $\Omega_3$, we again have similarly
\[
  b \Dopp_0
  < 
  \min_{0 \leq t \leq t_0 \wedge \tau} \min_{3 \leq j \leq N} |X_t^j - \ol x_0| 
\]
and thus $\tau_3 > t_0 \wedge \tau$. 
\end{proof}

\begin{rem} \label{r:coll:pf}
We briefly comment on the choice in Step 4a to extend the processes $S_t$ and $R_t$ in time as opposed to the more natural choice to extend instead the system for $(X_t^1, X_t^2)$ in \eqref{pfzp}. The reason is that in the time-extended version of \eqref{pfzp} collisions are possible under $\P_*$, and then we do not have a uniqueness concept for the solution to \eqref{pfzp} (see Section \ref{s:intro:N2} for an alternative representation). These issues can conveniently be avoided by working instead with the processes $S_t$ and $R_t$.
\end{rem}

\newpage

\section{Global existence}
\label{s:global-existence}

In this section we state and prove Theorem \ref{t:ex} on the global existence of weak solutions $\bX$ to \eqref{SDE:full:param}. For such $\bX$ (if it exists), let 
\begin{equation} \label{sD}
    \Dsame_t := \min_{\substack{ i \neq j \\ b^i = b^j}} |X_t^i - X_t^j|
\end{equation}
be the shortest distance between any two particles with equal sign. 

\begin{thm}[Existence and properties of weak solutions] \label{t:ex}
Let $N \geq 2$ and $\sigma, \gamma > 0$ be such that $\gamma < \frac12 \sigma^2 $. Let $\bb \in \{-1,1\}^N$ and let $N_+$ and $N_-$ be the numbers of positive and negative $b^i$. Let the random initial condition $\bX_0$ be exchangeable with respect to $\bb$ and such that $\P( \exists \, i \neq j : X_0^i = X_0^j) = 0$ and $\E [|\bX_0|] < \infty$. Then:
\begin{enumerate}
\item \label{t:ex:fixed-signs}
there exists a global weak solution $\bX$ to \eqref{SDE:full:param}. Moreover, $\bX_t$ is exchangeable with respect to $\bb$, $\E [|\bX_t|] < \infty$ for all $t > 0$,
\begin{equation} \label{pfzs}  
  \int_0^t |X_s^i - X_s^j|^{\alpha - 2} ds < \infty
  \qquad \text{a.s.\ for all } t \in [0, \infty), \text{ all } 1 \leq i < j \leq N \text{ and all } \frac{2 \gamma}{\sigma^2} < \alpha \leq 3,
\end{equation}
and
\begin{equation} \label{pfzb}
  \P \Big( \inf_{0 \leq t \leq T} \Dsame_t = 0 \Big) = 0
  \qquad \text{for all } T > 0.
\end{equation}
\item\label{t:ex:stronger-estimate} if, moreover, $\gamma < \sigma^2/(2(N-1))$, then for any 
\[
 \frac{2\gamma (N-1)}{\sigma^2 } < \alpha <1
\]
we have the additional estimate
\begin{equation}
  \label{c:ex:est}
  \frac1{N(N-1)} \sum_{\substack{i,j=1\\i\not=j}}^N \E \int_0^T |X_t^i - X_t^j|^{\alpha - 2} dt
  \leq
  \frac{2 \sqrt 2}{\alpha\delta\beta_0} \biggl(1 + \frac54 \sigma^2 T + \frac1N \sum_{i=1}^N \E |X_0^i|\biggr)
  \end{equation}
  
  for any $T > 0$ and any $\beta_0>0$ such that  $\beta_0 N\leq N_-, N_+$, where
\[
  \delta := \alpha \sigma^2 - 2\gamma(N-1) > 0.
\]
\end{enumerate}
\end{thm}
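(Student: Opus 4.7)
Following the Fournier--Jourdain scheme outlined in the introduction, the strategy is: (i) regularize the kernel, (ii) obtain an $\varepsilon$-uniform a priori estimate that already contains the quantitative bound \eqref{c:ex:est}, (iii) pass to the limit by tightness, and (iv) verify the no-same-sign-collision claim by comparison with a squared Bessel process of dimension $\geq 2$.

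\emph{Step 1 (Regularization).} Define $K_\varepsilon(x)=x/(|x|^2\vee \varepsilon^2)$, so that $K_\varepsilon$ is globally Lipschitz, and let $\bX^\varepsilon$ be the unique global strong solution of \eqref{SDE:full:param} with $K$ replaced by $K_\varepsilon$. Exchangeability with respect to $\bb$ is preserved, and a standard Gr\"onwall argument on $\E[|\bX_t^\varepsilon|]$ (using that $|K_\varepsilon|\leq 1/\varepsilon$ cancels between symmetric pairs in expectation, or alternatively $|K_\varepsilon(x)|\leq 1/|x|$ combined with the forthcoming estimate) keeps the first moment finite uniformly in $\varepsilon$.

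\emph{Step 2 (Key a priori estimate).} For $\alpha\in(2\gamma/\sigma^2,3]$, apply It\^o's formula to $Y^{ij}_t:=(|X^{\varepsilon,i}_t-X^{\varepsilon,j}_t|^2+\varepsilon^2)^{\alpha/2}$ and sum over $i\neq j$. Using that for $u=X^i-X^j$ the self-interaction of particles $i,j$ contributes $2\gamma b^ib^j|u|^{\alpha-2}$ and the It\^o correction contributes $\alpha\sigma^2|u|^{\alpha-2}$, the diagonal part of the drift equals
\[
\alpha\sum_{i\neq j}(\sigma^2\alpha+2\gamma\, b^ib^j)|X^{\varepsilon,i}_t-X^{\varepsilon,j}_t|_\varepsilon^{\alpha-2},
\]
which is coercive in each term provided $\alpha\sigma^2>2\gamma$, that is, precisely when $\gamma<\tfrac12\sigma^2$ and $\alpha>2\gamma/\sigma^2$. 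The triple terms (interactions of the $i,j$-pair with a third particle $k$) are estimated via Cauchy--Schwarz and Young's inequality,
\[
\bigl||X^i-X^j|^{\alpha-2}(X^i-X^j)\cdot K(X^i-X^k)\bigr|\leq |X^i-X^j|^{\alpha-2}\cdot\frac{|X^i-X^j|}{|X^i-X^k|},
\]
symmetrized over $(j,k)$ and reabsorbed into the coercive diagonal with a loss proportional to $N-1$. Taking expectations and using the martingale property of the noise part yields \eqref{pfzs} and, under the stronger assumption $\gamma<\sigma^2/(2(N-1))$, the quantitative bound
\[
\frac{\alpha\delta}{N(N-1)}\sum_{i\neq j}\E\!\int_0^T|X^{\varepsilon,i}_t-X^{\varepsilon,j}_t|_\varepsilon^{\alpha-2}dt\leq C\bigl(1+\sigma^2 T+\tfrac1N\textstyle\sum_i\E|X_0^i|\bigr),
\]
with $\delta=\alpha\sigma^2-2\gamma(N-1)$; the factor $\beta_0$ enters when converting the sum over all pairs to the right-hand side of \eqref{c:ex:est} by exploiting the lower bound $\beta_0 N\leq N_\pm$ on the number of particles of each sign (which guarantees that both same-sign and opposite-sign pair counts are comparable to $N^2$). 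The main obstacle here is keeping the triple-term absorption tight enough for the $N$-independent constant in the prefactor; a two-pair symmetrization trick is needed to bring in the explicit $\beta_0$.

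\emph{Step 3 (Tightness and passing to the limit).} The estimate in Step~2, combined with $|K_\varepsilon(X^i-X^j)|\leq |X^i-X^j|_\varepsilon^{-1}\leq |X^i-X^j|_\varepsilon^{\alpha-2}$ on the set where the distance is small and $\leq 1$ otherwise, gives a uniform $L^1(\Omega\times[0,T])$ bound on the drift. Combined with the Brownian part, Kolmogorov--Chentsov produces tightness of $\bX^\varepsilon$ in $C([0,T];\R^{2N})$. Extract a subsequence converging in law to some $\bX$; by Skorokhod representation work on a common probability space. The a.s.\ convergence plus the uniform drift bound and dominated convergence let us pass to the limit in the regularized integrated SDE \eqref{SDE:integrated}, yielding a weak solution in the sense of Definition~\ref{d:wSol}. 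The integrability \eqref{pfzs} passes to the limit by Fatou.

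\emph{Step 4 (No same-sign collisions).} For a putative collision of $X^i,X^j$ with $b^i=b^j$, localize and use Girsanov (as in Theorem~\ref{t:coll}) to decouple the pair from the other particles on a short interval. Setting $R_t=|X^i_t-X^j_t|^2/(2\sigma^2)$ and repeating the computation in Section~\ref{s:intro:N2}, the self-interaction now \emph{adds} $+2\gamma/\sigma^2$ to the Bessel dimension, yielding $R$ stochastically bounded below by a squared Bessel process of dimension $2(1+\gamma/\sigma^2)\geq 2$. By Theorem~\ref{t:BP}\ref{t:BP:geq2}, this process a.s.\ never hits $0$, giving \eqref{pfzb}. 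A standard union bound over all same-sign pairs and stopping-time exhaustion completes the argument.
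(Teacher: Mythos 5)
Your overall architecture (regularize, a priori estimate via It\^o on $\phi^\alpha$ of pairwise distances, tightness, Bessel comparison for same-sign pairs) matches the paper's, and your dimension count $2(1+\gamma/\sigma^2)>2$ for a same-sign pair is the right one. But there are two genuine gaps.

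First, your Step 2 does not close under the hypothesis of part 1 of the theorem. You absorb the triple terms (pair $i,j$ interacting with a third particle $k$) into the coercive diagonal ``with a loss proportional to $N-1$''; this turns the coercivity constant into $\alpha\sigma^2-2\gamma(N-1)$, which is positive only under the \emph{stronger} hypothesis $\gamma<\sigma^2/(2(N-1))$ of part 2. For part 1, where only $\gamma<\tfrac12\sigma^2$ is assumed, this absorption fails for $N\geq 3$. The paper's way out is an additional cut-off $\Phi_\ell$ that switches off the drift whenever two \emph{same-sign} particles are within $1/(2\ell)$ of each other; since among any three particles two share a sign, this forces $|x_i-x_j|+|x_i-x_k|\geq 1/(2\ell)$ wherever the drift is active (see \eqref{pfzc}), and the triple terms are then bounded by an $\ell$-dependent quantity that is \emph{sublinear} in the quantities $A_{ij}=\E\int\phi_\eta^{\alpha-2}$, so they never need to be absorbed into the coercive term. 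This extra regularization layer (and the subsequent patching of the processes $\bX^\ell$ over $\ell$, which requires knowing that same-sign collisions never occur) is not cosmetic; without it the estimate \eqref{pfzs} is not obtained for $\gamma$ up to $\tfrac12\sigma^2$.

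Second, your Step 4 decoupling does not work for $N\geq 3$. The Girsanov argument in Theorem~\ref{t:coll} relies on first constructing stopping times that keep all other particles at a positive distance from the distinguished pair, so that the drift being removed is bounded and Novikov's criterion applies. For a pair of \emph{same-sign} particles this preliminary step fails: a particle of the opposite sign is attracted to both members of the pair and cannot be prevented from approaching them, so the interaction with third particles cannot be bounded and the change of measure is not justified. The paper instead runs a backward induction over all subsets $I\subset\{1,\dots,N\}$: for $I=\{1,\dots,N\}$ the quantity $R^I=\sigma^{-2}\sum_{i\in I}|X^i-X^I|^2$ is \emph{exactly} a squared Bessel process of dimension $>2$ (the cross-interaction terms cancel identically, see \eqref{pfzg}), hence never hits zero; then, descending in $|I|$, the non-collision of all supersets controls the $\bX$-dependent remainder in the SDE for $R^I$ near a putative collision, and the dimension count ($>2$ for $|I|\geq 3$ and for same-sign pairs) rules out the collision. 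Your two-particle shortcut skips precisely the part that is hard in the signed setting.
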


\begin{rem} The estimate \eqref{pfzs} motivates the requirement $\gamma < \frac12 \sigma^2$. Indeed, for the drift in the system \eqref{SDE:full:param} to be integrable, it is sufficient to have \eqref{pfzs} with $\alpha = 1$. This requires $\gamma < \frac12 \sigma^2$. The strict inequality gives some `wiggle room' for taking $\alpha$ slightly smaller than $1$, which we exploit in the proofs of Theorems \ref{t:ex} and \ref{t:mf-limit}.  
\end{rem}

We give the proof of Theorem \ref{t:ex} at the end of this section. We start with preparations. Without loss of generality we may assume that
\begin{equation*}
  N_+ \geq N_- \geq 1.
\end{equation*}
Indeed, Proposition \ref{p:obv}\ref{p:obv:bflip} demonstrates that the first inequality is not restrictive, and in the single-sign case $N_- = 0$ the proof becomes easier (we omit the details).

We obtain a weak solution to \eqref{SDE:full:param} by constructing a process $\bX^{\e, \ell}$ and taking the limit as $\e \to 0$ and $\ell \to \infty$. For $\e \in (0,1)$ let
\begin{equation}
  \label{eqdef:K_e}
  K_\e (x) := \dfrac x{|x|^2 + \e^2}
\end{equation}
be a regularized interaction force. For any integer $\ell \geq 1$, we are going to define a cut-off function $\Phi_\ell : (\R^2)^N \to [0,1]$ which satisfies
\begin{equation} 
  \label{eq:prop:Phi_ell}
  \Phi_\ell (\bX_t) = \begin{cases}
    1
    &\text{if } \Dsame_t \geq \frac1\ell \\
    0
    &\text{if } \Dsame_t \leq \frac1{2\ell}.
  \end{cases}
\end{equation}
With this aim, take $\ell \geq 1$ and let
\begin{equation*}  
  \cU_\ell := \Big\{ \bx \in (\R^2)^N : \min_{i \neq j, \, b^i = b^j} |x_i - x_j| > \frac1\ell \Big\}.
\end{equation*}
Clearly, $\cU_\ell$ is increasing as a set in $\ell$. In fact, $\dist (\cU_k, \cU_\ell^c) > 0$ is increasing in $\ell $ for $\ell > k$. Hence, there exists a Lipschitz continuous cut-off function $\Phi_\ell : (\R^2)^N \to [0,1]$ which satisfies
\begin{equation*} 
  \Phi_\ell (\bx) = \begin{cases}
    1
    &\text{if } \bx \in  \cU_\ell \\
    0
    &\text{if } \bx \in  \cU_{2 \ell}^c.
  \end{cases}
\end{equation*}
Hence, $\Phi_\ell$ satisfies~\eqref{eq:prop:Phi_ell}.

Using $K_\e$ and $\Phi_\ell$, we define the $\e, \ell$-regularized system as
  \begin{equation} \label{SDE:e:ell}
  d X_t^{i,\e,\ell} = \sigma dB_t^i + \gamma \Phi_\ell (\bX_t^{\e,\ell}) \sum_{j=1}^N b^i b^j K_\e (X_t^{i,\e,\ell} - X_t^{j,\e,\ell}) dt,
\end{equation}
with the same initial condition $\bX_0$ as for $\bX$.
The role of $K_\e$ is to ensures existence and uniqueness of a strong solution $\bX^{\e, \ell}$. The role of $\Phi_\ell$ is to gradually turn off the drift for \textit{each} particle whenever two particles of the same sign come close together. Note that this also automatically turns off the drift if three or more particles come close together, regardless of their signs, as then there have to be at least two particles with the same sign.  

We start by proving the following adjusted version of \cite[Lemma 12]{FournierJourdain17}:

\begin{lem} [Uniform bounds on $\bX^{\e, \ell}$] \label{l:ex:apri}
Let the setting in Theorem \ref{t:ex} be given and assume $N_+ \geq N_- \geq 1$.
For any $\ell \geq 1$ and any $\e \in (0,1)$, let $\bX^{\e, \ell}$ be the solution of \eqref{SDE:e:ell} with $\bX_0^{\e, \ell} = \bX_0$. Then, $\bX_t^{\e, \ell}$ is exchangeable with respect to $\bb$ and the following three properties hold:
\begin{enumerate}[label=(\roman*)]
  \item \label{l:ex:apri:E} Let $\phi(x) := \sqrt{1 + |x|^2}$. For all $1 \leq i \leq N$ and all $t \geq 0$, 
  \[
    \E \phi(X_t^{i,\e,\ell})
    \leq \frac1{N_-} \sum_{j=1}^N \E \phi(X_0^j) + \frac{N}{N_-} \Big( \sigma^2 + \frac {\gamma (N-1)}2 \Big) t; 
  \]
  \item \label{l:ex:apri:al}
  Let $T>0$ and $2 \gamma \sigma^{-2} < \alpha  \leq 3$. Then there exists a constant $C>0$ independent of $\e$ such that for all $1 \leq i < j \leq N$
  \begin{align*} 
  \int_0^T \E \big[ |X_t^{i,\e,\ell} - X_t^{j,\e,\ell}|^{ \alpha - 2} \big] \, dt 
  &\leq C.
  \end{align*} 
  \item \label{l:ex:apri:stronger}
  In the setting of part~\ref{t:ex:stronger-estimate} of Theorem~\ref{t:ex}, $\bX^{\e,\ell}$ satisfies the upper bound~\eqref{c:ex:est}. 
\end{enumerate}
\end{lem}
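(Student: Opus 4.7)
The plan is to adapt the strategy of Lemmas~9--12 in~\cite{FournierJourdain17} to the signed setting, combining It\^o's formula with the antisymmetry of $K_\e$ and exchangeability within each sign class. Since \eqref{SDE:e:ell} has globally Lipschitz drift (thanks to $K_\e$ and $\Phi_\ell$), it has a pathwise unique strong solution, and exchangeability of $\bX^{\e,\ell}$ with respect to $\bb$ follows from the invariance of the equation under permutations $s$ with $\bb^s = \bb$.

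For part~\ref{l:ex:apri:E} I would apply It\^o's formula to $\phi(X_t^{i,\e,\ell})$ and sum over $i$. The Brownian piece is a martingale; the It\^o correction contributes at most $\sigma^2 N$ because $\Delta \phi \leq 2$; and for the drift the key step is to symmetrize using the antisymmetry of $K_\e$ to obtain
\[
  \frac{\gamma \Phi_\ell}{2} \sum_{i,j=1}^N b^i b^j \bigl(\nabla\phi(X^i) - \nabla\phi(X^j)\bigr) \cdot K_\e(X^i - X^j),
\]
each summand being bounded by $1$ since $|\nabla\phi(x) - \nabla\phi(y)| \leq |x-y|$ (from $\|\nabla^2\phi\|_{\mathrm{op}} \leq 1$) and $|z \cdot K_\e(z)| \leq 1$. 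This yields an $\e$- and $\ell$-uniform linear-in-$t$ bound on $\E \sum_i \phi(X_t^{i,\e,\ell})$. Exchangeability makes $\E \phi(X_t^{i,\e,\ell})$ the same for all $i$ with fixed $b^i$, so the total sum is at least $N_-$ times the individual expectation (using $N_- \leq N_+$); dividing by $N_-$ gives the claimed inequality.

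For part~\ref{l:ex:apri:al}, fix $(i,j)$, set $D^{ij} := X^{i,\e,\ell} - X^{j,\e,\ell}$, and derive from \eqref{SDE:e:ell} the SDE $dD^{ij}_t = \sigma\,d(B^i_t - B^j_t) + 2\gamma \Phi_\ell b^i b^j K_\e(D^{ij}_t)\,dt + R^{ij}_t\,dt$, where $R^{ij}$ collects the interactions with the other $N-2$ particles. Applying It\^o to $f_\eta(x) := (|x|^2 + \eta^2)^{\alpha/2}$ and sending $\eta \downarrow 0$, noting that in $\R^2$ one has $\Delta f_\eta(x) \to \alpha^2 |x|^{\alpha-2}$ and $\nabla f_\eta(x) \cdot K_\e(x) \leq \alpha |x|^{\alpha-2}$, and that the quadratic variation of $D^{ij}$ is $2\sigma^2 I\,dt$, the expectation of the resulting identity can be rearranged into
\[
  \alpha(\alpha\sigma^2 - 2\gamma)\int_0^T \E|D^{ij}_t|^{\alpha-2}\, dt
  \leq \E|D^{ij}_T|^\alpha + \gamma\alpha \int_0^T \E\bigl[|D^{ij}_t|^{\alpha-1}\,|R^{ij}_t|\bigr] dt,
\]
where $\Phi_\ell \leq 1$ and $|D|^\alpha/(|D|^2+\e^2) \leq |D|^{\alpha-2}$ have been used to dominate the direct drift contribution. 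The prefactor is strictly positive by $\alpha > 2\gamma/\sigma^2$. The end term $\E|D^{ij}_T|^\alpha$ is controlled via part~\ref{l:ex:apri:E} when $\alpha \in (2\gamma/\sigma^2, 1]$, by the same symmetrization trick applied to $|X^{i,\e,\ell}|^2$ when $\alpha \in (1, 2]$, and directly by part~\ref{l:ex:apri:E} when $\alpha \in (2,3]$ since $|D|^{\alpha-2}$ is no longer singular. The $R^{ij}$-integral is bounded via $|R^{ij}| \leq \sum_{k \neq i,j}(|K_\e(X^i - X^k)| + |K_\e(X^j - X^k)|)$ and Young's inequality, which splits each $|D^{ij}|^{\alpha-1}|K_\e(X^i - X^k)|$ into a small-coefficient $|D^{ij}|^{\alpha-2}$ term (absorbed on the left) and a $|X^i - X^k|^{\alpha-2}$ term (handled symmetrically across all pairs), producing an $\e$-uniform constant.

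Part~\ref{l:ex:apri:stronger} follows by summing the inequality of part~\ref{l:ex:apri:al} over all ordered pairs: the antisymmetry of $K_\e$ cancels the bulk of the cross contributions, and a refined accounting produces the effective prefactor $\delta := \alpha\sigma^2 - 2\gamma(N-1) > 0$ under the stronger hypothesis $\gamma < \sigma^2/(2(N-1))$. The explicit constant $2\sqrt 2/(\alpha\delta\beta_0)$ and the $1/(N(N-1))$ normalization in~\eqref{c:ex:est} arise from this summed identity, with the right-hand factor $\frac1N\sum_i \E|X_0^i|$ entering through part~\ref{l:ex:apri:E} and the $1/\beta_0$ factor through the two-class exchangeability hypothesis $\beta_0 N \leq N_\pm$. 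The main technical obstacle throughout is controlling $R^{ij}$ in part~\ref{l:ex:apri:al}: in the signed case $R^{ij}$ has no definite sign and same-sign pair interactions do not provide the attractive damping that is available in~\cite{FournierJourdain17}, so the cutoff $\Phi_\ell$ -- which vanishes only near same-sign collisions -- plays an essential role in keeping the cross terms finite uniformly in $\e$, and the passage $\eta \downarrow 0$ at points where $D^{ij}$ vanishes must be justified via Fatou/monotone convergence on the appropriate sides of the inequality.
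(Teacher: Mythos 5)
Your part~\ref{l:ex:apri:E} matches the paper's argument (It\^o on $\sum_i\phi(X^i)$, symmetrization via antisymmetry of $K_\e$, $\Delta\phi\le2$, and exchangeability to pass from the sum to a single index at the cost of $1/N_-$). The gap is in part~\ref{l:ex:apri:al}, in your treatment of the cross terms $R^{ij}$. You propose Young's inequality to split $|D^{ij}|^{\alpha-1}|K_\e(X^i-X^k)|$ into a small multiple of $|D^{ij}|^{\alpha-2}$ (absorbed on the left) plus a multiple of $|X^i-X^k|^{\alpha-2}$ (``handled symmetrically across all pairs''). But the latter terms are themselves quantities of the form $A_{ik}=\int_0^T\E|X^i-X^k|^{\alpha-2}dt$, i.e.\ exactly the quantities you are trying to bound, and they enter \emph{linearly} with a coefficient that, summed over the $N-2$ values of $k$, is of order $\gamma\alpha(N-1)$. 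After summing over all pairs, closing the estimate therefore requires $\alpha\sigma^2-2\gamma(N-1)>0$ --- which is precisely the stronger hypothesis of part~\ref{l:ex:apri:stronger}, not the hypothesis $\alpha>2\gamma\sigma^{-2}$ of part~\ref{l:ex:apri:al}. In effect you have written the proof of part~\ref{l:ex:apri:stronger} and used it for part~\ref{l:ex:apri:al}; no choice of Young exponents or weights gets you below the $2\gamma(N-1)$ threshold by this route. The missing idea is how the cutoff actually enters: among any three indices $i,j,k$ at least two signs coincide, so on the support of $\Phi_\ell$ one has $|x_i-x_j|+|x_i-x_k|\ge1/(2\ell)$. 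This converts each off-diagonal summand into an $\ell$-dependent multiple of $\phi_\eta^{-1}$ of a \emph{single} pair distance, which after H\"older in time becomes $C\,\ell\,A^{1/(2-\alpha)}$ --- sublinear in $A$. Nothing then needs to be absorbed into the left-hand side beyond the diagonal $k=j$ contribution $2\gamma\alpha\sum A_{ij}$, and the prefactor $\alpha(\alpha\sigma^2-2\gamma)>0$ suffices; the resulting inequality $C_1A\le C_0+C_2\ell A^{1/(2-\alpha)}$ closes because $1/(2-\alpha)<1$. Without this step the constant in part~\ref{l:ex:apri:al} cannot be obtained in the stated parameter range.

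A secondary issue: for $\alpha\in(1,2]$ you propose to control the terminal term via a second-moment bound obtained by applying the symmetrization trick to $|X^{i,\e,\ell}|^2$, but the theorem only assumes $\E|\bX_0|<\infty$, so no second moment is available. The paper avoids this entirely by running the It\^o computation only for $\alpha<1$ and deducing the range $\alpha\in[1,2)$ from the pointwise inequality $|x|^{\alpha-2}\le|x|^{\alpha'-2}+1$ with $\alpha'\in(2\gamma\sigma^{-2},1)$; the range $\alpha\in[2,3]$ follows directly from part~\ref{l:ex:apri:E}. Your description of part~\ref{l:ex:apri:stronger} (``antisymmetry cancels the bulk of the cross contributions'') is also not what happens there: the paper's part~\ref{l:ex:apri:stronger} is exactly the plain Young's-inequality estimate with no cancellation, which is why it needs the stronger hypothesis on $\gamma$.
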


We remark that \ref{l:ex:apri:E} also holds for $\gamma \geq \frac12 \sigma^2$, 
%Also, compared to FJ17 an additional $\frac {\gamma (N-1)}2$ appears in \ref{l:ex:apri:E}. The reason for this is that in FJ17 the drift does not cause particles to disperse. 
and that the constant $C$ in \ref{l:ex:apri:al} is explicit (see~\eqref{pfzk}) but not optimal.

\begin{proof}[Proof of Lemma \ref{l:ex:apri}] 
For simplicity we set $\bX := \bX^{\e, \ell}$ (we do not use \eqref{SDE:full:param} in the proof).
The exchangeability with respect to $\bb$ follows directly from the system \eqref{SDE:e:ell}. 
\smallskip

\textbf{Proof of \ref{l:ex:apri:E}}. By Proposition \ref{p:obv} we may assume that $i=1$, $b^2 = 1$ and $b^3 = -1$. We reuse $i$ as a generic index in the remainder.

 Applying It\^o's lemma to $\sum_{i=1}^N \phi(X_t^i)$ and taking the expectation, we obtain
\begin{multline} \label{pfzo}
    \sum_{i=1}^N \E \phi(X_t^i) 
   =  \sum_{i=1}^N \E \phi(X_0^i) 
     +  \gamma \sum_{i=1}^N \int_0^t \E \Big( \Phi_\ell (\bX_s) \sum_{j=1}^N b^i b^j K_\e (X_s^i - X_s^j) \cdot \nabla \phi (X_s^i) \Big) \, dt \\
     +  \sum_{i=1}^N \frac{\sigma^2}2 \int_0^t \E \Delta \phi(X_s^i) \, dt.
\end{multline} 

We treat the four terms in \eqref{pfzo} separately. The term concerning $\bX_0$ needs no adjustment; it appears almost directly in the desired estimate in \ref{l:ex:apri:E}. 
For the term concerning $\bX_t$, we obtain from the exchangeability with respect to $\bb$ (assuming $N \geq 3$ for convenience)
\[
  \sum_{i=1}^N \E \phi(X_t^i)
  = N_+ \E \phi(X_t^2) + N_- \E \phi(X_t^3)
  \geq N_- \E \phi(X_t^1).
\] 

For the remaining two terms in \eqref{pfzo}, we start with some preparations. For any $x = (x_1, x_2) \in \R^2$ we have
\[
  \nabla \phi (x) = \frac x{\phi (x)} \qquad\text{and}\qquad
  \nabla^2 \phi (x) = \frac1{\phi(x)^3} \begin{bmatrix} 1 + x_2^2 & -x_1 x_2 \\  -x_1 x_2 & 1 + x_1^2 \end{bmatrix}.
\]
The eigenvalues of $\nabla^2 \phi$ are $\frac1\phi, \frac1{\phi^3} \in (0,1]$, and therefore we have 
\[
|\nabla\phi(x)-\nabla\phi(x')|\leq |x-x'|
\qquad\text{and}\qquad  
\Delta \phi = \frac1\phi + \frac1{\phi^3} \leq 2. 
\]
Using the latter inequality we estimate the last term in \eqref{pfzo} as
\[
  \sum_{i=1}^N \frac{\sigma^2}2 \int_0^t \E \Delta \phi(X_s^i) \, dt
  \leq \sigma^2 N t.
\]
The term involving $K_\e$ requires several steps. First, we use the antisymmetry of $K_\e$ to rewrite the sum over $j$ as
\begin{equation} \label{pfzn}  
  \sum_{j=1}^N b^i b^j K_\e (X_s^i - X_s^j) \cdot \nabla \phi (X_s^i)
  = \frac12 \sum_{\substack{j =1\\j\not=i}}^N b^i b^j K_\e (X_s^i - X_s^j) \cdot \big( \nabla \phi (X_s^i) - \nabla \phi (X_s^j) \big).
\end{equation}
Second, we estimate the summand in absolute value. Component-wise, we apply $|b^i b^j| = 1$, $| K_\e (X_s^i - X_s^j) | \leq |X_s^i - X_s^j|^{-1}$, and $|\nabla \phi (X_s^i) - \nabla \phi (X_s^j)| \leq |X_s^i - X_s^j|$. Hence, the sum in \eqref{pfzn} is bounded from above by $\frac12 (N-1)$. Third, using that $|\Phi_\ell| \leq 1$, we obtain
\[
  \gamma \sum_{i=1}^N \int_0^t \E \Big( \Phi_\ell (\bX_s) \sum_{j=1}^N b^i b^j K_\e (X_s^i - X_s^j) \cdot \nabla \phi (X_s^i) \Big) \, dt
  \leq \frac{\gamma N(N-1)}2 t.
\]
In conclusion, by substituting the obtained estimates in \eqref{pfzo}, we obtain (i).
\smallskip

\textbf{Proof of \ref{l:ex:apri:al}}. To cover all four cases for the possible values of $b^i$ and $b^j$, we relabel the indices such that $b^1 = b^2 = 1 = - b^{N-1} = -b^N$. In the special case where $N_- = 1$, the proof below applies with $b^{N-1} = 1$. If, moreover, $N=2$, the proof applies with $b^2 = -1$.
We split three cases for $\alpha$. If $2 \leq \alpha \leq 3$, then \ref{l:ex:apri:al} follows from \ref{l:ex:apri:E} by
\[
  |X_t^i - X_t^j|^{ \alpha - 2}
  \leq 1 + |X_t^i - X_t^j|
  \leq 1 + |X_t^i| + |X_t^j|.
\] 
We treat the case $1 \leq \alpha < 2$ at the end of the proof as a corollary of the remaining case $2 \sigma^{-2} < \alpha < 1$, which we consider from here onwards.

\medskip

By the Monotone Convergence Theorem, it is sufficient to show that
\begin{align} \label{pfzi} 
  A_{ij} := \int_0^T \E \Big[ \big(|X_t^{i} - X_t^{j}|^2 + \eta^2 \big)^{\frac\alpha2 - 1} \Big] \, dt
  &\leq C
\end{align} 
for all $\eta \in (0,\e]$. Let 
\[
  \phi_\eta : \R^2 \to [0,\infty), \quad 
  \phi_\eta (x) := \sqrt{ |x|^2 + \eta^2 },
\]
and note that $\phi_1 = \phi$. We are going to apply It\^o's lemma to 
\begin{equation*}
  f(\bX) := \sum_{i=1}^N \sum_{j \neq i} \phi_\eta^\alpha(X^i - X^j).
\end{equation*}
As preparation for this, we compute (using that $\phi_\eta$ is even)
\begin{align*}
  \nabla_{X^i} f(\bX) &= 2 \sum_{j \neq i} \nabla \phi_\eta^\alpha(X^i - X^j),  \\
  \Delta f(\bX) &= 2 \sum_{i=1}^N \sum_{j \neq i} \Delta \phi_\eta^\alpha(X^i - X^j).
\end{align*}
Similar to the proof of \ref{l:ex:apri:E}, we apply It\^o's lemma and take the expectation. This  yields
\begin{subequations} \label{pfzm} 
\begin{align} \label{pfzm1}
   &\sum_{i=1}^N \sum_{j \neq i} \E \phi_\eta^\alpha(X_T^i - X_T^j) \\\label{pfzm2}
   &=  \sum_{i=1}^N \sum_{j \neq i} \E \phi_\eta^\alpha(X_0^i - X_0^j) \\\label{pfzm3}
   &\quad  + 2\gamma  \sum_{i=1}^N \sum_{j \neq i} \int_0^T \E \Big[ \Phi_\ell (\bX_t) \sum_{k \neq i} b^i b^k K_\e(X_t^i - X_t^k) \cdot \nabla \phi_\eta^\alpha(X_t^i - X_t^j) \Big] \, dt \\\label{pfzm4}
   &\quad  + \sigma^2 \sum_{i=1}^N \sum_{j \neq i} \int_0^T \E \Delta \phi_\eta^\alpha(X_t^i - X_t^j) \, dt.
\end{align} 
\end{subequations}

To obtain \eqref{pfzi} we estimate the four terms in \eqref{pfzm} separately. We will bound the term \eqref{pfzm1} from above by a constant. The term \eqref{pfzm2} is nonnegative; we simply bound it from below by $0$. We will split the drift term \eqref{pfzm3} into two parts which we both bound from below; one by $-C' \sum_{ij} A_{ij}$ and the other one by $-C \sum_{ij} A_{ij}^{1/(2-\alpha)}$. Finally, we will bound \eqref{pfzm4} from below by the \emph{positive} value $C'' \sum_{ij} A_{ij}$ with $C'' > C'$. To obtain this ordering of the constants we rely on $\gamma < \frac12 \sigma^2$. Then, applying all these bounds to \eqref{pfzm}, we will derive \eqref{pfzi}.

To estimate \eqref{pfzm4} from below, we use $\Delta \phi_\eta^\alpha (x) \geq \alpha^2 \phi_\eta^{\alpha-2} (x)$ to obtain
\begin{equation}
  \label{est:Ito-term}
  \sigma^2 \sum_{i=1}^N \sum_{j \neq i} \int_0^T \E \Delta \phi_\eta^\alpha(X_t^i - X_t^j) \, dt
  \geq \sigma^2 \alpha^2 \sum_{i=1}^N \sum_{j \neq i} \int_0^T \E \phi_\eta^{\alpha-2} (X_t^i - X_t^j)  \, dt 
  = \sigma^2 \alpha^2 \sum_{i=1}^N \sum_{j \neq i} A_{ij}.
\end{equation}
Note from the exchangeability with respect to $\bb$ that 
\begin{align*}
  \sum_{i=1}^N \sum_{j \neq i} A_{ij}
  = N_+ (N_+ - 1) A_{12} + 2 N_+ N_- A_{1N} + N_- (N_- - 1) A_{N-1,N}
  =: \sum_{m=1}^3 N_m A_m. 
\end{align*}

To estimate \eqref{pfzm1} from above, we rely on \ref{l:ex:apri:E} and on
\[
  \phi_\eta^\alpha (x - y) 
  \leq \phi (x - y) 
  \leq \sqrt 2 (\phi(x) + \phi(y)), 
\]
where we recall that $\phi(x) = \sqrt{|x|^2 + 1} \geq 1$.
Then
\begin{align} \notag 
  \sum_{i=1}^N \sum_{j \neq i} \E \phi_\eta^\alpha(X_T^i - X_T^j) 
  &\leq 2 \sqrt 2 (N-1) \sum_{i=1}^N \E \phi(X_T^i) \\\label{pfzl}
  &\leq  2 \sqrt 2 N(N-1) \bigg( \frac1{N_-} \sum_{i=1}^N \E \phi(X_0^i) + \frac{ N}{N_-} \Big( \sigma^2 + \frac {\gamma (N-1)}2 \Big) T \bigg) =: C_0.
\end{align}

Finally, we estimate \eqref{pfzm3} in absolute value. By moving the absolute value inside the sum over $k$ and then applying $|b^i b^j| = 1$, $|K_\e(x)| \leq \phi_\e^{-1}(x) \leq \phi_\eta^{-1}(x)$ and $|\nabla \phi_\eta^\alpha (x)| \leq \alpha \phi_\eta^{\alpha -1}(x)$, we get
\begin{multline}
  \label{est:drift-term}
  \bigg| 2\gamma  \sum_{i=1}^N \sum_{j \neq i} \int_0^T \E \Big[ \Phi_\ell (\bX_t) \sum_{k \neq i} b^i b^k K_\e(X_t^i - X_t^k) \cdot \nabla \phi_\eta^\alpha(X_t^i - X_t^j) \Big] \, dt \bigg| \\
  \leq 2 \gamma \alpha  \sum_{i=1}^N \sum_{j \neq i} \sum_{k \neq i} \int_0^T \E \big[ |\Phi_\ell (\bX_t)| \phi_\eta^{\alpha -1}(X_t^i - X_t^j) \phi_\eta^{-1}(X_t^i - X_t^k) \big] \, dt.
\end{multline}
Next, we split off the diagonal $k=j$ from the summation. For this diagonal contribution, we apply $|\Phi_\ell| \leq 1$ and get
\begin{align}
  \label{est:diagonal-term}
  2\gamma  \alpha \sum_{i=1}^N \sum_{j \neq i} \int_0^T \E \big[ |\Phi_\ell (\bX_t)| \phi_\eta^{\alpha -1}(X_t^i - X_t^j) \phi_\eta^{-1}(X_t^i - X_t^j) \big] \, dt
  \leq 2 \gamma \alpha \sum_{i=1}^N \sum_{j \neq i} A_{ij}.
\end{align}
For the off-diagonal contribution $k \neq j$, we estimate the integrand by applying the following steps:
\begin{itemize}
   \item Use
   \[ u^{\alpha-1} v^{-1} 
   \leq \Big( 1 + \frac1u \Big) \frac1v 
   %= \frac1v + \frac1{uv}
   = \Big( 1 + \frac1{u+v} \Big) \frac1v + \frac1{(u+v)u}.
   \]
   \item Note that
   \[
     \phi_\eta(x_i - x_j) + \phi_\eta(x_i - x_k)
     \geq |x_i - x_j| + |x_i - x_k|.
   \]
   We claim that if $\Phi_\ell (\bx) \neq 0$, then
   \begin{align} \label{pfzc}
     |x_i - x_j| + |x_i - x_k| \geq \frac1{2\ell}.
   \end{align}
   Indeed, out of the three signs $b^i, b^j, b^k$, at least two are equal. Then, $\Phi_\ell (\bx) \neq 0$ implies $\max \{|x_i - x_j|, |x_i - x_k|,|x_j - x_k| \} \geq 1/(2 \ell)$. Depending on which of the three distances is largest, \eqref{pfzc} follows either directly or from the triangle inequality.
   
   \item Use $|\Phi_\ell| \leq 1$.
\end{itemize} 
This yields
\begin{align*}
  |\Phi_\ell(\bx)| \phi_\eta^{\alpha -1}(X_t^i - X_t^j) \phi_\eta^{-1}(X_t^i - X_t^k)
  &\leq \frac{2 \ell + 1}{ \phi_\eta(X_t^i - X_t^k) } + \frac{2 \ell}{ \phi_\eta(X_t^i - X_t^j)}.
\end{align*}
To use this for estimating the off-diagonal, we 
\begin{itemize}
   \item apply H\"older to obtain
   \[
     \E \int_0^T \phi_\eta^{-1} (Y_t) \, dt
     \leq T^{\tfrac{1-\alpha}{2-\alpha}} \Big( \E \int_0^T \phi_\eta^{\alpha - 2} (Y_t) \, dt \Big)^{\tfrac1{2-\alpha}};
   \]
   \item use the exchangeability of $\bX_t$ with respect to $\bb$.
   %\item use for $\beta = \tfrac1{2-\alpha} \in (0,1)$ and $a_i > 0$ that $\frac1n \sum_{i=1}^n a_i^\beta \geq (\frac1n \sum_{i=1}^n a_i )^\beta$
 \end{itemize} 
Then, we estimate the off-diagonal contribution as
\begin{align} 
  \notag 
  \MoveEqLeft 2 \gamma \alpha \sum_{i=1}^N \sum_{j \neq i} \sum_{k \neq i,j} \int_0^T \E \big[ |\Phi_\ell (\bX_t)| \phi_\eta^{\alpha -1}(X_t^i - X_t^j) \phi_\eta^{-1}(X_t^i - X_t^k) \big] \, dt \notag \\
  &\leq 2\gamma  \alpha N (4 \ell + 1) \sum_{i=1}^N \sum_{j \neq i} \int_0^T \E \phi_\eta^{-1} (X_t^i - X_t^j) \, dt\notag  \\
  &\leq 10 \gamma \alpha \ell N T^{\tfrac{1-\alpha}{2-\alpha}} \sum_{i=1}^N \sum_{j \neq i}  \Big( \underbrace{ \int_0^T \E \phi_\eta^{\alpha - 2} (X_t^i - X_t^j) \, dt }_{A_{ij}} \Big)^{\tfrac1{2-\alpha}} \notag \\
  &= 10\gamma  \alpha \ell N T^{\tfrac{1-\alpha}{2-\alpha}} \sum_{m=1}^3 N_m A_m^{\tfrac1{2-\alpha}}.
  \label{est:off-diagonal-terms}
\end{align}

Finally, combining the estimates~\eqref{est:Ito-term}, \eqref{pfzl}, \eqref{est:diagonal-term}, and~\eqref{est:off-diagonal-terms} with~\eqref{pfzm}, we get 
\begin{align} \label{pfzj}
  C_1 \sum_{m=1}^3 N_m A_m 
  \leq C_0 + C_2 \ell \sum_{m=1}^3 N_m A_m^{\tfrac1{2-\alpha}},  
\end{align}
where $C_0$ is defined in \eqref{pfzl} and
\[
  C_1 := \alpha \Big( \alpha \sigma^2 - 2\gamma  \Big) > 0,
  \qquad C_2 := 10 \gamma \alpha N T^{\tfrac{1-\alpha}{2-\alpha}}.
\] 
To obtain the desired inequality \eqref{pfzi}, let $A_* = \max_{1\leq m \leq 3} A_m$ and assume $A_* \geq 1$ (otherwise \eqref{pfzi} is trivial). Using that $\frac12 N_-^2 \leq N_m \leq 2 N_+^2$, we obtain from \eqref{pfzj}
\[
  A_*
  \leq \frac{2 C_0}{N_-^2 C_1} + \frac{12 N_+^2 C_2 \ell}{N_-^2 C_1} A_*^{\tfrac1{2-\alpha}}
  \leq C_3 A_*^{\tfrac1{2-\alpha}}, 
  \qquad C_3 := 2\frac{C_0 + 6 N_+^2 C_2 \ell}{N_-^2 C_1}.
\] 
Hence,
\begin{align} \label{pfzk}
  A_m \leq A_* \leq C_3^{\tfrac{2-\alpha}{1-\alpha}} =: C_4.
\end{align}

Finally, we treat the case $\alpha \in [1,2)$. Take $\alpha' \in (2 \sigma^{-2}, 1)$ (e.g.\ as the midpoint), and let $C_4'$ be as in \eqref{pfzk} with respect to $\alpha'$. Then,
\[
  \int_0^T \E \big[ |X_t^{i} - X_t^{j}|^{ \alpha - 2} \big] \, dt 
  \leq
  \int_0^T \E \big[ |X_t^{i} - X_t^{j}|^{ \alpha' - 2} + 1 \big] \, dt 
  \leq C_4' + T.
\]
\smallskip

\textbf{Proof of \ref{l:ex:apri:stronger}}. 
We follow the proof of \ref{l:ex:apri:al} until \eqref{est:drift-term} and use rougher estimates onwards. Starting from the right-hand side of~\eqref{est:drift-term}, we use $|\Phi_\ell|\leq 1$, Young's inequality and exchangeability
to find 
\begin{align*}
\MoveEqLeft{2\gamma\alpha \sum_{i=1}^N \sum_{j \neq i} \sum_{k \neq i} \int_0^T \E \bigl[\phi_\eta^{\alpha -1}(X_t^i - X_t^j) \phi_\eta^{-1}(X_t^i - X_t^k) \bigr] \, dt}\\
&\leq 2\gamma\alpha \sum_{i=1}^N \sum_{j \neq i} \sum_{k \neq i} \int_0^T \E \Bigl[\frac{1-\alpha}{2-\alpha}\phi_\eta^{\alpha -2}(X_t^i - X_t^j) + \frac{1}{2-\alpha}\phi_\eta^{2-\alpha}(X_t^i - X_t^k) \Bigr] \, dt\\
&= 2\gamma\alpha(N-1) \sum_{i=1}^N \sum_{j \neq i} A_{ij}.
\end{align*}
Substituting this estimate, \eqref{est:Ito-term} and~\eqref{pfzl} in~\eqref{pfzm} we find
\[
  \bigl(\alpha^2 \sigma^2 - 2\gamma\alpha(N-1)\bigr) \sum_{i=1}^N \sum_{j \neq i} A_{ij}
  = \alpha \delta \sum_{i=1}^N \sum_{j \neq i} A_{ij} 
  \leq C_0.
\]
We estimate $C_0$ by using $\phi(x) \leq |x|+1$ and $2\gamma(N-1)< \sigma^2$ as
\begin{align*}
  C_0 
  &= 2 \sqrt 2 (N-1) \frac{N^2}{N_-} \Bigl[\frac1{N} \sum_{j=1}^N \E \phi(X_0^j) + \Big( \sigma^2 + \frac {\gamma (N-1)}2 \Big) T\Bigr] \\
  &\leq \frac{2 \sqrt 2}{\beta_0} N (N-1) \Bigl[ 1 + \frac1N \sum_{j=1}^N \E |X_0^j| +  \frac{5}4 \sigma^2 T\Bigr].
\end{align*}
Putting this together, we obtain estimate~\eqref{c:ex:est} from
\begin{align*}
  \limsup_{\eta \to 0} \sum_{i=1}^N \sum_{j \neq i} A_{ij} 
  \leq \frac{C_0}{\alpha \delta}
  \leq \frac{2 \sqrt 2}{\alpha \delta \beta_0} N (N-1) \Bigl[ 1 + \frac1N \sum_{j=1}^N \E |X_0^j| +  \frac{5}4 \sigma^2 T\Bigr].
\end{align*}
\end{proof}

Let $C([0,\infty); (\R^2)^N)$ be endowed with the compact-open topology (uniform convergence on $[0,T]$ for any $T > 0$).

\begin{lem}[Tightness of $\bX^{\e,\ell}$ in $\e$ {\cite[Lemma 13]{FournierJourdain17}}] \label{l:tight}
Let the setting in Lemma \ref{l:ex:apri} be given. Then, for any $\ell \geq 1$ the sequence $(\bX^{\e, \ell})_\e$ is tight in $C([0,\infty); (\R^2)^N)$.
\end{lem}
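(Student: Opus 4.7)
The plan is to establish tightness on each finite interval $C([0,T]; (\R^2)^N)$ separately, since compactness on each $[0,T]$ implies tightness in the compact-open topology. I will verify Aldous's tightness criterion for continuous processes: (a) for each $t \in [0,T]$, the family $(\bX_t^{\e,\ell})_\e$ is tight in $(\R^2)^N$; and (b) for every $\eta > 0$,
\begin{equation*}
\lim_{\delta \to 0}\, \sup_{\e \in (0,1)} \, \sup_{\tau} \, \P\bigl(|\bX_{\tau+\delta}^{\e,\ell} - \bX_\tau^{\e,\ell}| \geq \eta\bigr) = 0,
\end{equation*}
where $\tau$ ranges over stopping times (with respect to the filtration of $\bB$) bounded by $T - \delta$.

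Part (a) is immediate from Lemma~\ref{l:ex:apri}\ref{l:ex:apri:E} together with Markov's inequality. For part (b), decompose
\begin{equation*}
X_t^{i,\e,\ell} = X_0^i + \sigma B_t^i + D_t^{i,\e,\ell},
\qquad
D_t^{i,\e,\ell} := \gamma \int_0^t \Phi_\ell(\bX_u^{\e,\ell}) \sum_{j=1}^N b^i b^j K_\e(X_u^{i,\e,\ell} - X_u^{j,\e,\ell})\, du.
\end{equation*}
The Brownian increment is controlled by the strong Markov property: $\E|B_{\tau+\delta}^i - B_\tau^i|^2 = 2\delta$, so Markov's inequality yields $\P(\sigma|B_{\tau+\delta}^i - B_\tau^i| \geq \eta/2) = O(\delta/\eta^2)$, uniformly in $\e$ and $\tau$.

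The bulk of the work is the drift. Using $|\Phi_\ell| \leq 1$ and $|K_\e(x)| \leq |x|^{-1}$,
\begin{equation*}
|D_{\tau+\delta}^{i,\e,\ell} - D_\tau^{i,\e,\ell}| \leq \gamma \sum_{j\neq i} \int_\tau^{\tau+\delta} \frac{du}{|X_u^{i,\e,\ell} - X_u^{j,\e,\ell}|}.
\end{equation*}
Since $\gamma < \sigma^2/2$, the interval $(2\gamma/\sigma^2, 1)$ is non-empty; fix $\alpha$ in it. H\"older's inequality with conjugate exponents $2-\alpha$ and $(2-\alpha)/(1-\alpha)$ gives pathwise
\begin{equation*}
\int_\tau^{\tau+\delta} \frac{du}{|X_u^i - X_u^j|} \leq \delta^{(1-\alpha)/(2-\alpha)} \Bigl( \int_\tau^{\tau+\delta} |X_u^i - X_u^j|^{\alpha-2}\, du \Bigr)^{1/(2-\alpha)}.
\end{equation*}
Taking expectations, then applying Jensen (valid because $1/(2-\alpha) < 1$) and Fubini, and enlarging the random time-interval to $[0,T]$, Lemma~\ref{l:ex:apri}\ref{l:ex:apri:al} produces a uniform-in-$\e$ bound of the form $\E \int_\tau^{\tau+\delta} |X_u^i - X_u^j|^{-1}\, du \leq C_T\, \delta^{(1-\alpha)/(2-\alpha)}$. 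A final use of Markov's inequality supplies the required decay for the drift contribution to~(b).

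The main obstacle is that a direct Kolmogorov--\v{C}entsov continuity estimate for $\bX^{\e,\ell}$ is \emph{not} available: accommodating the Brownian part demands an $L^p$ estimate on increments with exponent $p > 2$, whereas Lemma~\ref{l:ex:apri}\ref{l:ex:apri:al} only controls drift moments in the range $p < 2 - 2\gamma/\sigma^2 \leq 2$. Aldous's criterion circumvents this mismatch by requiring only control of probabilities (i.e.\ first moments, via Markov) of increments over stopping-time intervals, and the H\"older interpolation above is precisely the device that converts the time-integrable singularity $|X^i - X^j|^{\alpha-2}$ furnished by Lemma~\ref{l:ex:apri}\ref{l:ex:apri:al} into a uniform $\delta$-H\"older bound on $\E|\bD_{\tau+\delta}^{\e,\ell} - \bD_\tau^{\e,\ell}|$.
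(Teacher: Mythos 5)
Your proof is correct, and it reaches the same crux as the paper — namely, the H\"older interpolation with exponent $2-\alpha$ that converts the uniform-in-$\e$ bound on $\int_0^T \E|X^{i,\e,\ell}_u - X^{j,\e,\ell}_u|^{\alpha-2}\,du$ from Lemma~\ref{l:ex:apri}\ref{l:ex:apri:al} into a $\delta^{(1-\alpha)/(2-\alpha)}$ modulus for the drift increments — but the packaging differs. The paper does not use Aldous's criterion (nor Kolmogorov--\v{C}entsov): it splits off $X_0^i$ and $\sigma B^i$ exactly as you do, and then proves tightness of the drift $J^\e$ directly by exhibiting the explicit compact set $\cK = \{f : f(0)=0,\ |f(t)-f(s)|\le A|t-s|^\beta\}$ with $\beta = \frac{1-\alpha}{2-\alpha}$, together with the \emph{pathwise} bound $|J^\e_t - J^\e_s| \le \gamma (t-s)^{\beta} Z^\e$ where $Z^\e$ is a single random variable with $\E Z^\e \le C$ uniformly in $\e$; Markov's inequality then gives $\P(J^\e\notin\cK)\le C\gamma/A$. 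This is slightly more economical than your route: it needs no stopping times, no marginal-tightness check at each $t$, and no passage through the Skorokhod space (your appeal to Aldous gives $D$-tightness, and one must invoke the standard fact that for continuous processes with continuous limit points this upgrades to $C$-tightness). Your approach buys generality — Aldous's criterion would survive if the a priori estimate only held after localization by stopping times — but here the pathwise H\"older bound is available, so the explicit compact set is the shorter path. Your closing remark about Kolmogorov--\v{C}entsov being unavailable is accurate for the full process but moot once the Brownian part is split off, which both arguments do.
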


\begin{proof} 
By the compact-open topology on $C([0,\infty); (\R^2)^N)$ it is enough to show that $(X^{i,\e, \ell})_\e$ is tight in $C([0,T]; \R^2)$ for all $T > 0$ and all $1 \leq i \leq N$, i.e.\ for all $\delta > 0$ there exists a compact $\cK \subset C([0,T]; \R^2)$ such that $\P (X^{i,\e, \ell} \notin \cK) < \delta$ for all $\e > 0$. 

Fix $T,\delta > 0$ and $1 \leq i \leq N$.
By relabelling the signs and the particles, we may assume $i=1$. By definition of the SDE in \eqref{SDE:e:ell}, we obtain
\begin{equation*}
  X_t^{1,\e,\ell} = X_0^{1} + \sigma B_t^1 + J_t^\e, \qquad J_t^\e :=  \gamma \int_0^t \Phi_\ell (\bX_s^{\e,\ell}) \sum_{j =1 }^N b^1 b^j K_\e (X_s^{1,\e,\ell} - X_s^{j,\e,\ell}) ds.
\end{equation*}
Since $X_0^{1}$ (as a constant-in-time path in $C([0,T]; \R^2)$) and $B^1$ are tight, it remains to show that the drift term $J^\e$ is tight. We do this with the set 
\[
    \cK := \{ f \in C([0,T]; \R^2) : f(0) = 0 \text{ and } \forall \, t,s : |f(t) - f(s)| \leq A |t-s|^\beta \}, 
\]
where $A > 0$ and $\beta \in (0,1)$ are constants which we choose later. Note that $\cK$ is equi-continuous and bounded and therefore pre-compact in $C([0,T]; \R^2)$.

To estimate $\P (J^\e \notin \cK)$ from above, we take in Lemma \ref{l:ex:apri}\ref{l:ex:apri:al} $\alpha = \frac12 (1 + 2 \gamma \sigma^{-2})$, take any $0 \leq s < t \leq T$ and compute (recall $\| \Phi_\ell \|_\infty \leq 1$ and $|K_\e (x)| \leq |x|^{-1}$) by applying H\"older's inequality with $p = 2 -\alpha > 1$
\begin{align} 
  |J_t^{\e} - J_s^{\e}|
  &\leq \gamma \sum_{j =2 }^N \int_s^t |X_r^{1,\e,\ell} - X_r^{j,\e,\ell}|^{-1} \, dr \notag \\
  &\leq \gamma \sum_{j =2 }^N ( t-s )^{\frac{1-\alpha}{2-\alpha}} \bigg( \int_s^t |X_r^{1,\e,\ell} - X_r^{j,\e,\ell}|^{\alpha-2} \, dr \bigg)^{\frac1{2-\alpha}}  \notag \\
  &\leq \gamma (t-s)^{\frac{1-\alpha}{2-\alpha}} \underbrace{ \sum_{j =2 }^N \bigg( 1 + \int_0^T |X_r^{1,\e,\ell} - X_r^{j,\e,\ell}|^{\alpha-2} \, dr \bigg) }_{=: Z^{\e}}.
  \label{est:control-of-J-for-tightness}
\end{align}
By Lemma \ref{l:ex:apri}\ref{l:ex:apri:al}, we have $\E Z^{\e} \leq C$ for some constant $C > 0$ independent of $A$ and $\e$. Using this together with the Markov Inequality, we find with $\beta = \frac{1-\alpha}{2-\alpha}$ that 
\comm{[proof of 1st ineq below: go by inclusion of events, and then realize that it follows from $Z^\e(\omega) \leq \frac A\gamma \implies J^{\e} \in \cK$, which is trivial.]}
\begin{align*}
   \P (J^{\e} \notin \cK)
   \leq \P \Big(Z^{\e} > \frac A\gamma\Big)
   \leq \frac\gamma A \E Z^{\e}
   \leq C \frac \gamma A.
 \end{align*} 
Since $A > 0$ is free to choose and $C$ is independent of $A$ and $\e$, we conclude that $J^\e$ is tight in $C([0,T];\R^2)$. This completes the proof of Lemma~\ref{l:tight}.
\end{proof}

The next step is to take the limit $\e\to0$ in the collection of processes $\{ \bX^{\e,\ell} \}_{\ell=1}^\infty$. This is the content of the following lemma.

\begin{lem}[Passing to $\e \to 0$ {\cite[Lemma 14]{FournierJourdain17}}] \label{l:eto0}
Let the setting in Theorem \ref{t:ex} be given and assume $N_+ \geq N_- \geq 1$. Then, there exists a probability space and a filtration $(\cF_t)_{t \geq 0}$ on which there is defined:
\begin{itemize}
  \item a $2N$-dimensional Brownian motion $\bB$ adapted to $(\cF_t)_{t \geq 0}$ such that the initial condition $\bX_0$ is $\cF_0$-measurable,
  \item for each $\ell \geq 1$ a process $\bX^\ell$ adapted to $(\cF_t)_{t \geq 0}$ which satisfies for all $t \geq 0$
  \begin{equation*}
    X_t^{i,\ell} = X_0^{i} + \sigma B_t^i + \gamma \int_0^t \Phi_\ell (\bX_s^\ell) \sum_{j = 1}^N b^i b^j K (X_s^{i,\ell} - X_s^{j,\ell}) ds
    \qquad \text{for all } 1 \leq i \leq N.
  \end{equation*}
\end{itemize}
Moreover, for all $\ell \geq 1$, $\bX^\ell$  is exchangeable with respect to $\bb$ and satisfies the same estimates as $\bX^{\e, \ell}$ in Lemma \ref{l:ex:apri}.
%\ref{l:ex:apri:E},\ref{l:ex:apri:al},\ref{l:ex:apri:stronger}
Finally, 
% \label{l:eto0:compat}
   for all $\ell' \geq \ell$, almost surely $\bX_t^\ell = \bX_t^{\ell'}$ for all $t \in [0, \tau^\ell)$, where
  \[
    \tau^\ell := \inf \Big\{ t \geq 0 : \Dsamel_t \leq \frac1\ell \Big\}
  \]
  and where $\Dsamel_t$ is defined as in \eqref{sD} with respect to $\bX_t^\ell$.
% \end{enumerate}
\end{lem}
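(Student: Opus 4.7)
The plan is to follow the strategy of \cite[Lemma 14]{FournierJourdain17} by a joint tightness argument over $\ell$ combined with Skorokhod's representation. The family $(\bX_0, \bB, (\bX^{\e,\ell})_{\ell\geq1})_\e$ is tight in the product space $\R^{2N}\times C([0,\infty);\R^{2N})\times C([0,\infty);(\R^2)^N)^{\N}$: tightness of each $(\bX^{\e,\ell})_\e$ is Lemma~\ref{l:tight}, while $\bB$ and $\bX_0$ are $\e$-independent and hence trivially tight. Prokhorov then yields weak convergence along a subsequence $\e_n\to 0$, and Skorokhod's representation gives a new probability space on which $\bX^{\e_n,\ell}\to\bX^\ell$ almost surely, uniformly on compact time intervals, jointly in $\ell\geq1$. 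Equipping this space with the usual augmentation $\cF_t$ of $\sigma(\bX_0, \bB_s, (\bX^\ell_s)_{\ell\geq1} : s\leq t)$, preservation of finite-dimensional distributions under Skorokhod ensures that $\bB$ remains an $(\cF_t)$-Brownian motion. Exchangeability with respect to $\bb$ is distributional and passes to the limit, while the estimates of Lemma~\ref{l:ex:apri} transfer to $\bX^\ell$ by Fatou, noting that $x\mapsto|x|^{\alpha-2}$ is lower semicontinuous on $\R^2$ when extended by $+\infty$ at the origin.

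The main obstacle is passing to the limit in the drift
\[
  \gamma \int_0^t \Phi_\ell(\bX^{\e_n,\ell}_s) \sum_{j=1}^N b^i b^j K_{\e_n}(X^{i,\e_n,\ell}_s - X^{j,\e_n,\ell}_s)\,ds.
\]
Rewriting the integrated SDE, the drift integrals converge almost surely, uniformly on compacts, simply because both $\bX^{\e_n,\ell}$ and $\bB$ do. To identify the limit with $\gamma\int_0^t \Phi_\ell(\bX^\ell_s)\sum_j b^i b^j K(X^{i,\ell}_s-X^{j,\ell}_s)\,ds$, I combine pointwise convergence of the integrand off the singular set $\{s : X^{i,\ell}_s=X^{j,\ell}_s\}$ (which has zero Lebesgue measure almost surely, e.g.\ via the Bessel-process analysis of Section~\ref{s:intro:N2} applied pairwise) with the $\e$-uniform estimate from Lemma~\ref{l:ex:apri}\ref{l:ex:apri:al}: taking $\alpha\in(2\gamma\sigma^{-2},1)$, the bound $|K_\e(x)|\leq|x|^{-1}$ yields an $L^{2-\alpha}((0,T)\times\Omega)$ control uniform in $n$, which gives uniform integrability and a Vitali-type $L^1$ convergence of the drift integrals, pinning down the almost sure limit.

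For the consistency statement, I exploit the nested structure at the pre-limit level: for each $\e$, let $\tilde\bX^\e$ denote the unique strong solution of the $K_\e$-regularized SDE \emph{without} cut-off, which exists because $K_\e$ is bounded and globally Lipschitz. Since $\Phi_\ell=\Phi_{\ell'}=1$ on $\{\Dsame\geq 1/\ell\}$, strong uniqueness of the $\Phi_\ell$-cut-off SDE yields $\bX^{\e,\ell}_s=\bX^{\e,\ell'}_s=\tilde\bX^\e_s$ for all $s<\tau^{\e,\ell}$, where $\tau^{\e,\ell}$ is the first time the same-sign minimum distance for $\bX^{\e,\ell}$ reaches $1/\ell$. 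On the event $\{\tau^\ell>t\}$, path continuity of $\bX^\ell$ and the uniform convergence $\bX^{\e_n,\ell}\to\bX^\ell$ imply that $\tau^{\e_n,\ell}>t$ for $n$ large enough, and passing to the limit in $\bX^{\e_n,\ell}_s=\bX^{\e_n,\ell'}_s$ for $s\leq t$ gives the claim. The hardest part is the drift passage above, where one must combine the uniform $L^{2-\alpha}$ control with the almost sure uniform convergence carefully around the singular set.
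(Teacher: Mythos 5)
Your proposal is correct and follows essentially the same route as the paper, which itself only sketches the argument by deferring to \cite[Lemma 14]{FournierJourdain17}: tightness jointly over $\ell$, Skorokhod representation along a single subsequence so that the limiting $\bB$ is independent of $\ell$, identification of the drift limit via the $\e$-uniform estimate of Lemma \ref{l:ex:apri}\ref{l:ex:apri:al}, and the nesting of the cut-off solutions at the pre-limit level. One cosmetic fix: the fact that $\{s : X^{i,\ell}_s = X^{j,\ell}_s\}$ has zero Lebesgue measure is better justified by the Fatou-transferred bound $\int_0^T \E\,|X^{i,\ell}_t - X^{j,\ell}_t|^{\alpha-2}\,dt < \infty$ with $\alpha<2$ (which you already have) than by the two-particle Bessel analysis of Section~\ref{s:intro:N2}, which does not apply verbatim to a pair embedded in the $N$-particle system.
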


Lemma \ref{l:eto0} is a paraphrasing of \cite[Lemma 14]{FournierJourdain17}. The arguments of the proof do not rely on the specific expression of the SDE \eqref{SDE:e:ell}; the main ingredients are Lemmas \ref{l:ex:apri} and \ref{l:tight}. To motivate this, we briefly outline the proof of \cite[Lemma 14]{FournierJourdain17}. It relies on tightness (Lemma \ref{l:tight}) of $(\bX^{\e,\ell})_\e$ to construct $\bX^\ell$ as the limit of a subsequence of $(\tilde \bX^{\e,\ell})_\e$, which is a special $\omega$-parametrization of $(\bX^{\e,\ell})_\e$ as constructed by the Skorokhod Representation Theorem. Care is needed to make sure that $\tilde \bB$ (which is also constructed by the Skorokhod Representation Theorem) is independent of $\ell$. By this construction, the stated properties on $\bX^\ell$ are inherited from those of $\bX^{\e,\ell}$; see in particular Lemma \ref{l:ex:apri}. 
\comm{[Proof that exchangeability with respect to $\bb$ is inherited (FJ17 says this without proof, and doesnt have the $\bb$ issue. MAP is OK with omitting the following proof): $\bX_t^\ell$ being exchangeable can be characterized from its law $\mu_t^\ell$ as follows: for any $\varphi \in C_b((\R^2)^N)$ and any permutation $\sigma \in S_N$ for which $\bb^\sigma = \bb$, 
\[
  \int [\varphi(\bx) - \varphi(\bx_\sigma)] \, d\mu_t^\ell(\bx) = 0.
\]
This follows directly from the exchangeability of the laws  $\mu_t^{\e,\ell}$ of $\bX_t^{\e,\ell}$, because we can just take $\psi(\bx) := \varphi(\bx) - \varphi(\bx_\sigma)$ as the test function in the narrow topology, and pass to the limit $\e \to 0$.]
}

\begin{proof}[Proof of Theorem \ref{t:ex}]
As above we may assume that $N_+ \geq N_- \geq 1$.
Let $\bB$ and $(\bX^\ell)_\ell$ be as in Lemma \ref{l:eto0}. The construction of $\bX$ from $(\bX^\ell)_\ell$ is done in detail in \cite[Theorem 7]{FournierJourdain17}. We briefly recall the construction here. Fix $\ell \geq 1$. For all $t \in [0, \tau^\ell)$, $\Phi_{\ell} (\bX_t^\ell) = 1$, and thus $\bX^\ell$ satisfies \eqref{SDE:full:param} until the stopping time $\tau^\ell$. Hence, by setting $\bX_t(\omega) := \bX_t^\ell(\omega)$ for all $t \in [0, \tau^\ell(\omega))$, all properties of Theorem~\ref{t:ex} are satisfied up to $\tau^\ell$. Since $\tau^\ell$ is nondecreasing in $\ell$, we can extend this construction of $\bX$ up to the $\ell$-independent stopping time
\[
  \tau := \sup_{\ell \geq 1} \tau^\ell.
\]
This completes the construction of $\bX$ up to $\tau$.
\map{I've changed some $T$'s below into $T\wedge \tau$'s, because we don't have the process $\bX$ after time $\tau$.}

To conclude that $\bX$ satisfies Theorem \ref{t:ex}, it remains to show that $\P (\tau < \infty) = 0$. Indeed, if so, then~\eqref{pfzs} and \eqref{c:ex:est} follow from Lemmas \ref{l:ex:apri} and \ref{l:eto0}, and \eqref{pfzb} follows from 
\comm{[Last equality follows from the events being equal. It follows from a trail of trivial steps; p.63.]}
\begin{equation} \label{pfyv}
  0 = \P(\tau < \infty) 
  = \lim_{T \to \infty}  \P(\tau < T) 
  = \lim_{T \to \infty} \P \Big( \inf_{0 \leq t < T\wedge \tau} \Dsame_t = 0 \Big).
\end{equation}

To show that $\P (\tau < \infty) = 0$, we follow the lines of the proof of \cite[Lemma 15]{FournierJourdain17}. This may seem more complex than necessary because in our  setting with signed particles almost half of all particle interactions or more are repulsive instead of attractive. For $N \geq 3$, however, unlike in the proof of Theorem \ref{t:coll}, it is difficult to isolate two particles with the same sign from the others, because any other particle of opposite sign is strongly attracted to them. Therefore, it is natural to consider collisions between 3 or perhaps even 4 particles. In fact, the argument below will consider \textit{all}  kinds of particle collision (except those of two particles with opposite sign).  

To encode any kind of particle collision, we set $I \subset \{1,\ldots,N\}$ as a candidate set of particles which collide at the same time-space point $(t,x)$. Let $n := |I| \geq 2$ be the number of particles in~$I$, and set 
\[
  X^I := \frac1n \sum_{i \in I} X^i, \qquad B^I := \frac1n \sum_{i \in I} B^i
\]
as averages over $i\in I$. To measure whether we are close to an $n$-particle collision, we set
\[
  R^I := \frac1{\sigma^2} \sum_{i \in I} |X^i - X^I|^2.
\]
Note that when $N=2$, $R^I$ coincides with the squared Bessel process $R$ \map{from Section \ref{s:intro:N2}}. Also, note that an $n$-particle collision at $t$ is characterized as the event $\{ R_t^I = 0 \}$. 

Using \eqref{pfyv} we translate the statement $\P (\tau < \infty) = 0$ in terms of $R^I$: 
\begin{equation} \label{pfzf} 
  \P(\tau < \infty) 
  = \lim_{T \to \infty} \P \Big( \exists \, 1 \leq i < j \leq N \text{ such that } b^i = b^j \text{ and } \inf_{t \in [0,T\wedge \tau)} R_t^{\{i,j\}} = 0 \Big).
\end{equation}
We will show that the probability in the right-hand side is $0$ for any $T > 0$. 

Instead of $I = \{i,j\}$ in \eqref{pfzf}, it turns out to be much easier to prove that for the full set $I = \{1, \ldots, N\}$ 
\begin{equation} \label{pfze}
  \P \Big( \inf_{t \in [0,T\wedge \tau)} R_t^I = 0 \Big) = 0.
\end{equation}
Indeed, from a computation which we perform below, it follows that $R^I$ is a squared Bessel process with dimension larger than $2$. Then, \eqref{pfze} follows from Theorem \ref{t:BP}\ref{t:BP:geq2}.

Next we derive \eqref{pfzf} from \eqref{pfze} by backward induction. The induction statement over $n$ is:
\begin{equation} \label{pfzd}
  \P \Big( \inf_{t \in [0,T\wedge \tau)} R_t^I = 0 \Big) = 0
  \qquad \text{for all } I \subset \{1,\ldots,N\} \text{ with } |I| = n.
\end{equation}

As preparation for proving the backward induction step, we take $I \subset \{1,\ldots,N\} $ with $|I| = n$ as given, and write
\[
  d X_t^i = \sigma dB_t^i + f_i(\bX_t) dt, \qquad
  f_i(\bX_t) := \gamma \sum_{j =1 }^N b^i b^j K(X_t^i - X_t^j) dt,
  \qquad f_I := \frac1n \sum_{i \in I} f_i.
\]
Then, 
\[
  d (X_t^i - X_t^I) = \sigma d (B_t^i - B_t^I) + (f_i(\bX_t) - f_I(\bX_t)) dt.
\]
Applying  It\^o's lemma to $R^I$ (e.g.~\cite[Th.~4.2.1]{Oksendal03}) 
we obtain the equation 
\begin{multline} \label{pfzh}
% p.36
  d R_t^I 
  = \frac2{\sigma^2} \sum_{i \in I} (X_t^i - X_t^I) \cdot (f_i(\bX_t) - f_I(\bX_t)) dt
    + 2 (n-1) dt
    + \frac2\sigma \sum_{i \in I} (X_t^i - X_t^I) \cdot d (B_t^i - B_t^I)\\
    \text{which holds for }0\leq t < \tau.
\end{multline}
 
Next we simplify \eqref{pfzh}. We start with the stochastic term. Since $\sum_{i \in I} (X_t^i - X_t^I)  = n X_t^I - n X_t^I = 0$, we get 
\comm{[Proof: simple computation; p.63]}
\[
  \frac2\sigma \sum_{i \in I} (X_t^i - X_t^I) \cdot d (B_t^i - B_t^I)
  = 2 \sqrt{R_t^I} d \beta_t,
  \qquad \beta_t := \sum_{i \in I} \int_0^t \frac{  (X_s^i - X_s^I) }{\sqrt{ \sum_{j \in I} |X_s^j - X_s^I|^2 }} \cdot d B_s^i, 
\]
where $\beta_t$ is a $1$-dimensional Brownian Motion. % this only needs \bX to be adapted to \bB
For the drift term, we observe from $\sum_{i \in I} (X_t^i - X_t^I)  = 0$ that the part containing $f_I(\bX)$ vanishes. For the part containing $f_i(\bX)$, we substitute the definition of $f_i(\bX)$ and split the sum over $j$ in $j \in I$ and $j \notin I$. The part of the sum over $j \in I$ can be simplified; from the antisymmetry of $K$ and $x \cdot K(x) = 1$ for all $x \neq 0$ (recall $K(0) = 0$) we obtain
\begin{align*}
  &\frac{2 \gamma}{\sigma^2} \sum_{i,j \in I} b^i b^j (X_t^i - X_t^I) \cdot K(X_t^i - X_t^j) \\
  &= \frac\gamma{\sigma^2} \sum_{i,j \in I} b^i b^j (X_t^i - X_t^I) \cdot K(X_t^i - X_t^j) 
    +  \frac\gamma{\sigma^2} \sum_{i,j \in I} b^j b^i (X_t^j - X_t^I) \cdot K(X_t^j - X_t^i) \\
  &= \frac\gamma{\sigma^2} \sum_{i,j \in I} b^i b^j (X_t^i - X_t^j) \cdot K(X_t^i - X_t^j) \\
  &= \frac\gamma{\sigma^2} \Big( \sum_{i,j \in I} b^i b^j - \sum_{i \in I} (b^i)^2 \Big)
  = \frac\gamma{\sigma^2} \Big( \Big(\sum_{i \in I} b^i\Big)^2 - n \Big),
\end{align*}
which is independent of $\bX_t$. In particular, $b^I := \sum_{i \in I} b^i$ is the net sign of the $n$ particles in $I$. Putting this together, \eqref{pfzh} reads
\begin{equation} \label{pfzg} 
  d R_t^I 
  = 2 \sqrt{R_t^I} d \beta_t    
    + \Big( 2 (n-1) + \gamma \frac{(b^I)^2 - n}{\sigma^2} \Big) dt    
    + \frac{2\gamma}{\sigma^2} \sum_{i \in I} \sum_{j \notin I} b^i b^j (X_t^i - X_t^I) \cdot K(X_t^i - X_t^j) dt.
\end{equation}
Note that if $I = \{1,\ldots,N\}$, then the $\bX$-dependent part of the drift in \eqref{pfzg} vanishes, and thus $R^I$ is a squared Bessel process of dimension $2 (N-1) + \gamma ((b^I)^2 - N)\sigma^{-2}$. For general $I$, if the $\bX$-dependent part of the drift term in \eqref{pfzg} would not be there, then $R^I$ is a squared Bessel process with dimension $M := 2 (n-1) + \gamma ((b^I)^2 - n)\sigma^{-2}$. If $n \geq 3$, then (using $\sigma^2 > 2\gamma $ and $(b^I)^2 \geq 0$)
\[
  M = n \Big(2 - \frac\gamma {\sigma^2} \Big) - 2 + \frac{ \gamma (b^I)^2 }{ \sigma^2 }
  > \frac92 - 2 = \frac52,
\]
which is larger than $2$. In addition, if $I = \{i,j\}$ with $b^i = b^j$ (i.e.\ as in \eqref{pfzf}), then $M = 2 + 2\gamma \sigma^{-2} > 2$ as well. 

It is left to show that the $\bX$-dependent part of the drift in \eqref{pfzg} leaves invariant the property of a Bessel process with dimension greater than $2$ that it never reaches $0$. We refer for this to \cite[Lemma 15]{FournierJourdain17}, which applies with obvious modifications, and finish here with a brief outline of the argument. Suppose the induction statement \eqref{pfzd} holds for $n+1$. Take any $I$ with $|I| = n$, and suppose that $R_t^I$ reaches $0$ at some $t_0 >0$. Then, for any $k \notin I$, since $R_t^{I \cup \{k\}}$ stays away from $0$, $X_t^k$ stays away from $(X_t^i)_{i \in I}$. This can be used to gain sufficient control on the $\bX$-dependent part of the drift in \eqref{pfzg} to derive that $R_t^I$ behaves like a squared Bessel process for $t$ close to $t_0$. Since we have already computed above that the dimension of this squared Bessel process is greater than $2$, we reach a contradiction with $R_{t_0}^I = 0$. The estimates on the $\bX$-dependent part of \eqref{pfzg} on which \cite{FournierJourdain17} relies only require that the absolute value of the summand can be controlled, and thus the appearance of $b^i b^j$ is of little importance. In addition, the exchangeability with respect to $\bb$ plays no significant role. 
\end{proof}

\newpage

\section{The mean-field limit as $N\to\infty$}
\label{s:MFlimit}

In this section we state and prove Theorem \ref{t:mf-limit}.
We describe the signed particles as position-sign couples $Y^i := (X^i,b^i)\in \R_\pm^2$ (recall Section \ref{s:prel}). We give $\R_\pm^2$ the product topology; a continuous curve in this space must preserve the value of $b$, which holds for the solutions that we consider in this paper. The differential operators  $\nabla$, $\div$, and $\Delta$ only operate on the $x$-variable. Similarly to Section \ref{s:global-existence} we equip $C([0,\infty),\cP(\R^2_\pm))$ with the topology of uniform convergence on compact time intervals and weak convergence in $\cP(\R^2_\pm)$; with this topology the space is Polish~\cite[Ex.~IV.2.2]{Conway90}.  

Next we describe the assumptions of Theorem \ref{t:mf-limit}. Let $\sigma>0$ be fixed and $\gamma=\gamma_N$ depend on $N$. We assume that there exists $\ol\gamma> 0$ such that 
\begin{equation}
\label{cond:params-MF-limit}
N \gamma_N \to \ol \gamma \qquad \text{as }N\to \infty,\qquad \text{with}\qquad \sigma^2>  2\ol \gamma.
\end{equation}
Note that $\gamma_N = O(\frac1N)$, which corresponds to a mean-field scaling of the interaction between particles. Also, \eqref{cond:params-MF-limit} implies that there exists $0<\alpha<1$ and $c>0$ such that 
\begin{equation}
  \label{cond:params-MF-limit-2}
  \alpha \sigma^2 - 2\gamma_N (N-1) \geq c \qquad \text{for  all $N$ large enough}.
\end{equation}

For the initial condition, instead of drawing $\bX_0^N$ from a certain distribution for fixed $N$, we draw $\bY_0^N = \bigl((X^{N,i}_0,b^{N,i})\bigr)_{i=1,\dots,N}$ from an exchangeable distribution on $(\R^2_\pm)^{N}$ for each $N \geq 2$. In particular, this makes the signs $\bb^N$ random. We further assume that the initial positions $(X^{N,i}_0)_{i=1,\dots,N}$ are almost surely distinct and that 
\begin{equation}
  \label{cond:params-MF-limit-3}
\sup_{N\geq 2} \frac1N \sum_{i=1}^N \E |X^{N,i}_0| < \infty.
\end{equation}
We also assume that $\bb^N$ cannot be too skewed; concretely, we assume that there exists $\beta_0>0$ such that for all $N \geq 2$ 
\begin{equation}
  \label{cond:params-MF-limit-4}
\min \{ N_+, N_- \}
\geq \beta_0 
\qquad \text{almost surely},
\end{equation}
where, as in Theorem \ref{t:ex}, $N_+$ ($N_-$) are the number of positive (negative) signs in $\bb^N$.
Note that we do not assume that $(Y^{N,i}_0)_{i=1,\dots,N}$ are independent, and  if necessary, therefore, we can fix $N_+$ and $N_-$. 

Under these assumptions, the following corollary of Theorem~\ref{t:ex} holds.

\begin{cor}[Theorem~\ref{t:ex} for $\bY$] \label{c:ex:random-signs}
Let $\bY_0 = (\bX_0, \bb)$ be as above. Then, there exists a global weak solution $\bY = (\bX,\bb)$ to \eqref{SDE:full:param}. Moreover, this solution is exchangeable, $\E |\bX_t| < \infty$ for all $t > 0$, and \eqref{pfzs}, \eqref{pfzb} and the following version of \eqref{c:ex:est} holds
\begin{equation}
  \label{c:ex:est:Sec5}
  \frac1{N(N-1)} \sum_{\substack{i,j=1\\i\not=j}}^N \E \int_0^T |X_t^i - X_t^j|^{\alpha - 2} dt
  \leq
  C,
  \end{equation}
where $C > 0$ is independent of $N$.
\end{cor}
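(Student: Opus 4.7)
The plan is to reduce the corollary to Theorem \ref{t:ex} by conditioning on the random sign configuration $\bb^N$. Under \eqref{cond:params-MF-limit} we have $\gamma_N \to 0$, so $\gamma_N < \tfrac12\sigma^2$ for all $N$ large, and \eqref{cond:params-MF-limit-2} gives $\gamma_N(N-1) \leq (\alpha\sigma^2 - c)/2 < \sigma^2/2$, i.e.\ $\gamma_N < \sigma^2/(2(N-1))$ for $N$ large. Thus both parts of Theorem \ref{t:ex} are available for each deterministic realization $\bb$ of $\bb^N$. I would build a probability space carrying $\bY_0^N = (\bX_0^N, \bb^N)$ together with a $2N$-dimensional Brownian motion $\bB^N$ independent of $\bY_0^N$, and for each realization of $\bb^N$ run the construction of $\bX^{\e,\ell}$ in \eqref{SDE:e:ell} followed by the passages $\e \to 0$ and $\ell \to \infty$ of Lemmas \ref{l:ex:apri}--\ref{l:eto0}. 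Since $\bb^N$ takes only finitely many values, measurability issues are avoided by carrying out the construction separately on each event $\{\bb^N = \bb\}$ and gluing. The conditional law of $\bX_0^N$ given $\bb^N = \bb$ is exchangeable with respect to $\bb$ in the sense of Section~\ref{s:prel}, which is precisely the hypothesis needed to apply Theorem \ref{t:ex} conditionally.

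Exchangeability of $\bY_t$ as a random element of $(\R_\pm^2)^N$ follows from three observations: (a) for any permutation $s$, $\bb^N \stackrel{d}{=} (\bb^N)^s$ by exchangeability of $\bY_0^N$; (b) for each fixed $\bb$, the conditional process is exchangeable with respect to $\bb$ by Theorem \ref{t:ex}; (c) combining these via the tower property yields $(Y_t^{N,s(i)})_i \stackrel{d}{=} (Y_t^{N,i})_i$. The integrability $\E|\bX_t| < \infty$ and the pathwise statements \eqref{pfzs}, \eqref{pfzb} transfer directly from their conditional versions, using \eqref{cond:params-MF-limit-3} for the integrability.

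For the uniform-in-$N$ estimate \eqref{c:ex:est:Sec5}, I would apply \eqref{c:ex:est} conditionally on $\bb^N = \bb$ with the $\alpha$ supplied by \eqref{cond:params-MF-limit-2}. Then $\delta = \alpha \sigma^2 - 2\gamma_N(N-1) \geq c > 0$ uniformly in $N$, and \eqref{cond:params-MF-limit-4} (interpreted as $N_+, N_- \geq \beta_0 N$) provides a $\bb$- and $N$-uniform lower bound for the parameter $\beta_0$ entering \eqref{c:ex:est}. Hence, conditionally on $\bb^N = \bb$,
\[
\frac1{N(N-1)} \sum_{i \neq j} \E\Bigl[ \int_0^T |X_t^{N,i} - X_t^{N,j}|^{\alpha - 2}\, dt \,\Big|\, \bb^N = \bb \Bigr]
\leq C \Bigl( 1 + T + \frac1N \sum_{i=1}^N \E\bigl[|X_0^{N,i}| \,\big|\, \bb^N = \bb\bigr] \Bigr),
\]
with $C$ independent of $N$ and $\bb$. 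Taking expectation over $\bb^N$ and invoking \eqref{cond:params-MF-limit-3} collapses the right-hand side to an $N$-independent constant, giving \eqref{c:ex:est:Sec5}.

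The main obstacle I anticipate is not any single inequality but rather the bookkeeping around the joint construction: one must ensure that the tightness, Skorokhod representation and limit-passage arguments underlying Lemmas~\ref{l:tight}--\ref{l:eto0}, which were stated for a fixed sign vector, carry over uniformly in the random $\bb^N$. Because the estimates in Lemma~\ref{l:ex:apri} depend on $\bb$ only through $N_\pm$ and because \eqref{cond:params-MF-limit-4} renders these quantities uniformly controlled, the construction can be executed on each of the finitely many sample values of $\bb^N$ and reassembled into a single process $\bY$ on the original probability space, at which point the rest of the properties follow as above.
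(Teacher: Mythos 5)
Your proposal is correct and follows essentially the same route as the paper: the paper's proof is precisely the observation that exchangeability of $\bY_0$ makes the conditional law of $\bX_0$ given $\bb$ exchangeable with respect to $\bb$, after which Theorem~\ref{t:ex} is applied to each realisation of $\bb$ and the uniform constant in \eqref{c:ex:est:Sec5} comes from \eqref{cond:params-MF-limit-2}--\eqref{cond:params-MF-limit-4}. Your more detailed bookkeeping (finitely many sign configurations, gluing, reading \eqref{cond:params-MF-limit-4} as $\min\{N_+,N_-\}\geq\beta_0 N$) simply spells out what the paper leaves implicit.
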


\begin{proof}
Since $\bY_0$ is exchangeable, the conditional law of $\bX_0$ given $\bb$ is exchangeable with respect to $\bb$. Corollary~\ref{c:ex:random-signs} then follows from applying Theorem \ref{t:ex} to each realisation of $\bb$.
\end{proof}

From Corollary \eqref{c:ex:random-signs} we obtain from the sequence $(\bY^N_0)_{N \geq 2}$ of independent initial conditions as described above a sequence of independent weak solutions $(\bY^N)_{N \geq 2}$ to \eqref{SDE:full:param} and a sequence of independent, corresponding Brownian motions $(\bB^N)_{N \geq 2}$. To describe the limit $N\to\infty$, we consider in Theorem~\ref{t:mf-limit} below the random empirical measures
\[
L^N(dxdb) 
:= \frac1N \sum_{i=1}^N \delta_{(X^{N,i}, b^{N,i})}(dxdb)\quad \in \cP(C([0,\infty),\R^2_\pm))  
\]
on path space. The desired limit of $L^N$ is a $\cP(C([0,\infty),\R^2_\pm))$-valued random variable $L$. We write $L_t$ for the marginal of $L$ corresponding to time $t$, which is a $\cP(\R^2_\pm)$-valued random variable. We aim to show that $L = (L_t)_{t\geq 0}$ is almost surely a solution of the following partial differential equation for $\rho: [0,\infty)\to \cP(\R^2_\pm)$:
\begin{equation}
\label{eq:PDE}
  \partial_t \rho_t + \ol \gamma \div \rho_t \sK\rho_t = \sigma \Delta \rho_t,
\end{equation}
where we write for $\mu\in \cP(\R^2_\pm)$ and $(x,b)\in \R^2_\pm$
\[
(\sK\mu)(x,b) := \int_{x'\in\R^2} K(x-x') \big( \mu(dx',\{b\}) - \mu(dx',\{-b\}) \big).
\]
Separated into densities $\rho^+ = \rho(\cdot,\{+1\})$ and $\rho^- = \rho(\cdot,\{-1\})$ of positive and negative particles this equation reads
\begin{align*}
  &\partial_t \rho^+_t + \ol\gamma \div \bigl[\rho^+_t K{*}(\rho^+_t-\rho^-_t)\bigr] = \sigma \Delta \rho^+_t,\\
  &\partial_t \rho^-_t - \ol\gamma \div \bigl[\rho^-_t K{*}(\rho^+_t-\rho^-_t)\bigr] = \sigma \Delta \rho^-_t.
\end{align*}

\begin{rem}
  By a standard argument, the equation~\eqref{eq:PDE} can be interpreted as characterizing a `nonlinear SDE', also known as a McKean-Vlasov process, an interpretation that we do not pursue here. 
  \end{rem}

\begin{defn}
  \label{d:solution-of-PDE}
  Fix $\rho^\circ\in \cP(\R^2_\pm)$.
$\rho\in C([0,\infty),\cP(\R^2_\pm))$ is a weak solution of the partial differential equation~\eqref{eq:PDE} with initial datum $\rho^\circ$ if it satisfies
\begin{enumerate}[]
\item  \label{d:solution-of-PDE-initial-data}
$\rho_{t=0} = \rho^\circ$;
\item   \label{d:solution-of-PDE-cond-power-minus-one=part} 
For every $T>0$, 
\begin{equation}
  \label{d:solution-of-PDE-cond-power-minus-one} 
\int_0^T \iint\limits_{(\R^2_\pm)^2}|x-x'|^{-1} \rho_t(dxdb)\rho_t(dx'db')\, dt < \infty;
\end{equation}
\item\label{d:solution-of-PDE-def-W} For every $T > 0$ and every $\varphi\in C_b^2(\R^2_\pm)$ we have $\cW[T,\varphi](\rho)=0$, where $\cW[T,\varphi]$ is defined by
\begin{align}
\cW[T,\varphi](\rho) &:= \int_{\R^2_\pm} \varphi(x,b) \bigl[\rho_T(dxdb) - \rho_0(dxdb)\bigr] \notag\\
&\qquad - \ol\gamma \int_0^T \iint\limits_{(\R^2_\pm)^2} bb' K(x-x') \cdot \nabla \varphi(x)\rho_t(dxdb)\rho_t(dx'db')\, dt\notag\\
&\qquad {}- \sigma \int_0^T \int_{\R^2_\pm} \Delta\varphi(x,b) \rho_t(dxdb)\, dt.
\label{eqdef:W}
\end{align}
\end{enumerate}
\end{defn}

\begin{thm}[Mean-field limit $N\to\infty$]
  \label{t:mf-limit}
Consider the setup above; assume in particular condition~\eqref{cond:params-MF-limit}. Assume that $L^N_{t=0}$ converges narrowly in probability to some deterministic $\rho^\circ\in \cP(\R^2_\pm)$.  Then:
\begin{enumerate}[label=(\roman*)]
\item\label{t:mf-limit:tightness} The sequence $L^N$ of random variables is tight in~$\cP(C([0,\infty);\R^2_\pm))$. Therefore there exists a subsequence $N_k\to\infty$ such that $L^{N_k}$ converges in distribution to a (possibly random) limit $L\in \cP(C([0,\infty);\R^2_\pm))$.
\item\label{t:mf-limit:solution} Defining $\rho$ to be the random variable in $C([0,\infty);\cP(\R^2_\pm))$ given by $\rho_t := L_t$, almost surely $\rho$ is a weak solution to~\eqref{eq:PDE} with initial datum $\rho^\circ$, as in Definition~\ref{d:solution-of-PDE}.
\end{enumerate}
\end{thm}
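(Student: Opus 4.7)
Part~\ref{t:mf-limit:tightness} is a tightness statement for the random empirical measure $L^N$. For exchangeable systems, a classical argument reduces tightness of $L^N$ in $\cP(C([0,\infty);\R^2_\pm))$ to tightness of the law of a single particle $Y^{N,1}=(X^{N,1},b^{N,1})$ in $C([0,\infty);\R^2_\pm)$. Since $b^{N,1}\in\{\pm 1\}$ is constant in time and trivially tight, the task reduces to proving tightness of $(X^{N,1})_N$ in $C([0,\infty);\R^2)$. I would mirror the argument of Lemma~\ref{l:tight}, decomposing $X^{N,1}_t = X^{N,1}_0 + \sigma B^{N,1}_t + J^N_t$ with $J^N_t$ the drift, and applying H\"older in the spirit of \eqref{est:control-of-J-for-tightness} to bound the modulus of continuity of $J^N$ by a constant times $\bigl(\int_0^T \sum_{j\neq 1}|X^{N,1}_t - X^{N,j}_t|^{\alpha-2}dt\bigr)^{1/(2-\alpha)}$. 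The key new ingredient is the \emph{uniform-in-$N$} bound \eqref{c:ex:est:Sec5} of Corollary~\ref{c:ex:random-signs}, which is applicable because of assumptions~\eqref{cond:params-MF-limit-2} (to get strict positivity of $\delta$) and~\eqref{cond:params-MF-limit-4} (to have $\beta_0$ independent of $N$). Combined with tightness of $X^{N,1}_0$ (from~\eqref{cond:params-MF-limit-3}) and of $\sigma B^{N,1}$, this gives tightness of $X^{N,1}$ and hence of $L^N$.

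\textbf{It\^o expansion of $\cW$ on $L^N$.} For part~\ref{t:mf-limit:solution}, the natural approach is to show that $\cW[T,\varphi](L^N)\to 0$ in probability, then pass to the limit along $N_k\to\infty$. Applying It\^o's formula to $\varphi(Y^{N,i}_t)$, summing over $i$, dividing by $N$, and rewriting the double sum over $i,j$ as a double integral against $L^N_t\otimes L^N_t$ (the diagonal $i=j$ contributes $0$ since $K(0)=0$), we obtain
\[
\int \varphi\, dL^N_T - \int \varphi\, dL^N_0 = M^{N,\varphi}_T + \gamma_N N \int_0^T\!\!\iint\limits_{(\R^2_\pm)^2}\!\! bb'K(x{-}x')\cdot\nabla\varphi(x)\,L^N_t(dxdb)L^N_t(dx'db')\,dt + \sigma\!\int_0^T\!\!\int \Delta\varphi\, dL^N_t\,dt,
\]
after matching the Brownian normalization with the $\sigma\Delta\rho$ of~\eqref{eq:PDE}, where $M^{N,\varphi}_T:=\frac1N\sum_i\int_0^T \sigma\nabla\varphi(Y^{N,i}_t)\cdot dB^{N,i}_t$. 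Independence of the $B^{N,i}$ gives $\E[(M^{N,\varphi}_T)^2]=O(1/N)$. Comparing with the definition~\eqref{eqdef:W} of $\cW$ yields
\[
\cW[T,\varphi](L^N) = M^{N,\varphi}_T + (\gamma_N N - \ol\gamma)\int_0^T\!\!\iint bb'K(x{-}x')\cdot\nabla\varphi(x)\,L^N_t L^N_t\,dt.
\]
The first term vanishes in $L^2$; the second vanishes because $\gamma_N N\to\ol\gamma$ and the double integral is bounded in $L^1(\Omega\times(0,T))$ uniformly in $N$ using $|K|\leq |x-x'|^{-1}\leq 1 + |x-x'|^{\alpha-2}$ together with~\eqref{c:ex:est:Sec5}.

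\textbf{Passing to the weak limit via truncation.} The functional $\cW[T,\varphi]$ is not continuous on $\cP(C([0,\infty);\R^2_\pm))$ because of the singularity of $K$ at the diagonal, so a truncation argument is needed. Let $K_\e$ be the regularization~\eqref{eqdef:K_e} and let $\cW^\e[T,\varphi]$ denote the version of $\cW$ with $K$ replaced by $K_\e$. I would then argue in three steps: (a) $\cW^\e[T,\varphi]$ is continuous on $\cP(C([0,\infty);\R^2_\pm))$ since its integrand is bounded and continuous, so by the Skorokhod representation theorem $\cW^\e[T,\varphi](L^{N_k})\to \cW^\e[T,\varphi](L)$ in distribution; (b) using $|K(x){-}K_\e(x)|\leq C\e/|x|^2$ near the diagonal one has, uniformly in $N$,
\[
\E\,\bigl|\cW[T,\varphi](L^N) - \cW^\e[T,\varphi](L^N)\bigr| \leq C_\varphi\,\E\!\int_0^T\!\!\iint_{|x{-}x'|\leq \e^{1/2}}\!|x{-}x'|^{-1}\,L^N_t L^N_t\,dt,
\]
which goes to $0$ as $\e\downarrow 0$ by~\eqref{c:ex:est:Sec5}; (c) the same bound with $L^N$ replaced by $L$ holds once the estimate~\eqref{c:ex:est:Sec5} is transferred to the limit $L$, which I would do via Fatou together with truncation of the singular integrand by $|x{-}x'|^{\alpha-2}\wedge M$ and sending $M\to\infty$. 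Combining (a)--(c) with the convergence $\cW[T,\varphi](L^N)\to 0$ in probability from the previous paragraph yields $\cW[T,\varphi](L)=0$ almost surely for each fixed $\varphi$ and $T$; a countable dense family of $(T,\varphi)$ then upgrades this to almost sure solvability as in Definition~\ref{d:solution-of-PDE}.

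\textbf{Main obstacle.} The hardest technical step is step (c) above: transferring the moment bound~\eqref{c:ex:est:Sec5} to the limit $L$ and using it uniformly in $\e$. Because $K$ is exactly Coulomb and~\eqref{c:ex:est:Sec5} only provides $|x{-}x'|^{\alpha-2}$ for some $\alpha<1$ (not $\alpha=1$), the control of $\iint_{|x{-}x'|\leq \e^{1/2}} |x{-}x'|^{-1} L_t L_t$ is borderline, and one must balance the small volume $\{|x{-}x'|\leq\e^{1/2}\}$ against the singularity. This is the analog of the cap $\gamma<\tfrac12\sigma^2$ that already appears in Theorem~\ref{t:ex}, and it is precisely this wiggle-room that Corollary~\ref{c:ex:random-signs} provides.
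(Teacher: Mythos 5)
Your proposal is correct and follows essentially the same route as the paper: Sznitman's exchangeability lemma plus single-particle tightness via the uniform-in-$N$ moment bound~\eqref{c:ex:est:Sec5} for part~\ref{t:mf-limit:tightness}, and for part~\ref{t:mf-limit:solution} the It\^o expansion giving $\cW(L^N)=M^{N,\varphi}_T+(N\gamma_N-\ol\gamma)(\cdots)$ together with the four-term $K_\e$-truncation decomposition and the transfer of the moment bound to $L$ by truncation at level $m$. The only cosmetic difference is in controlling $\cW-\cW_\e$: the paper uses the single global bound $|K(x)-K_\e(x)|\leq \e^{1-\alpha}|x|^{\alpha-2}$, whereas your near-diagonal splitting at $|x-x'|\leq\e^{1/2}$ also works but should additionally record the (easy) off-diagonal contribution.
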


\begin{rem}
In contrast to many mean-field convergence results, we do not prove that the limit $L$ is deterministic. This is related to the question of uniqueness of weak solutions of~\eqref{eq:PDE}, which is still open; if uniqueness for~\eqref{eq:PDE} is proved, then a classical argument gives convergence to a deterministic limit. As for the uniqueness question, in the more widely-studied Patlak-Keller-Segel problem for the fully attractive case, uniqueness is known to hold for solutions with finite $L\log L$-norm~\cite{Egana-FernandezMischler16} or with certain time-dependent bounds on an $L^p$-norm~\cite{BedrossianMasmoudi14}, but the general case is still open.
\end{rem}

\subsection{Proof of Theorem~\ref{t:mf-limit}}
  The proof of this theorem is a relatively straightforward generalisation of~\cite[Theorem~6]{FournierJourdain17}. We describe the main steps and refer for some details to~\cite{FournierJourdain17}. 

Part~\ref{t:mf-limit:tightness}, the tightness of $L^N$, is a consequence of the following lemma by Sznitman. 
\begin{lem}{\cite[p.~178]{Sznitman91}}
  \label{l:Sznitman}
  For every $N\geq 1$, let $\{Z^{N,i}$, $i=1,\dots,N\}$ be an exchangeable set of random variables in a Polish space $\cZ$. Then the following are equivalent:
  \begin{enumerate}
    \item $\displaystyle \Big(\frac1N\sum_{i=1}^N \delta_{Z^{N,i}}\Big)_{N}$ is a tight sequence of random variables in $\cP(\cZ)$;
    \item $(Z^{N,1})_{N}$ is a tight sequence of random variables in $\cZ$.
  \end{enumerate}
\end{lem}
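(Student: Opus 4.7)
The plan is to prove the two implications separately, with both relying on two ingredients: Prokhorov's theorem (which in the Polish space $\cZ$ identifies relative compactness in $\cP(\cZ)$ with tightness) and the exchangeability identity
\[
  \E\bigl[\mu^N(A)\bigr] = \P\bigl(Z^{N,1} \in A\bigr) \qquad \text{for every Borel } A \subset \cZ,
\]
where $\mu^N := \frac1N \sum_{i=1}^N \delta_{Z^{N,i}}$. Indeed $\E[\mu^N(A)] = \frac1N\sum_i \P(Z^{N,i}\in A) = \P(Z^{N,1}\in A)$ by exchangeability of the coordinates.

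For the direction ``empirical measures tight $\Longrightarrow$ marginals tight'', I would fix $\eta > 0$ and use tightness of $(\mu^N)_N$ to obtain a relatively compact $\cK \subset \cP(\cZ)$ with $\P(\mu^N \notin \cK) < \eta/2$ for all $N$. Prokhorov's theorem then gives a compact $K \subset \cZ$ such that $\mu(K^c) < \eta/2$ for every $\mu \in \cK$. Exchangeability yields
\[
  \P(Z^{N,1} \notin K) = \E[\mu^N(K^c)] \leq \sup_{\mu \in \cK} \mu(K^c) + \P(\mu^N \notin \cK) < \eta,
\]
so $(Z^{N,1})_N$ is tight in $\cZ$.

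For the converse ``marginals tight $\Longrightarrow$ empirical measures tight'', the plan is to exploit the tightness of $(Z^{N,1})$ at a geometrically decreasing scale. Given $\eta > 0$, for each $k \geq 1$ choose a compact $K_k \subset \cZ$ such that $\P(Z^{N,1} \notin K_k) < 4^{-k}\eta$ for every $N$. Define
\[
  \cK_\eta := \bigcap_{k \geq 1} \bigl\{ \mu \in \cP(\cZ) : \mu(K_k^c) \leq 2^{-k} \bigr\}.
\]
By construction $\cK_\eta$ is tight (given $\delta > 0$, pick $k$ with $2^{-k} < \delta$ and use $K_k$), hence relatively compact in $\cP(\cZ)$ by Prokhorov. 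Next, Markov's inequality combined with exchangeability gives
\[
  \P\bigl(\mu^N(K_k^c) > 2^{-k}\bigr) \leq 2^k \, \E[\mu^N(K_k^c)] = 2^k \, \P(Z^{N,1} \notin K_k) < 2^{-k} \eta,
\]
and a union bound over $k$ yields $\P(\mu^N \notin \cK_\eta) \leq \sum_{k\geq 1} 2^{-k}\eta = \eta$, establishing tightness of $(\mu^N)_N$.

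The main conceptual hurdle lies in the second direction: one needs to combine a countable family of compact constraints in $\cZ$ into a single tight subset of $\cP(\cZ)$ while still keeping the probability that $\mu^N$ violates the constraints uniformly small. The key trick is the mismatch between the exceptional probability scale ($4^{-k}$) and the threshold scale ($2^{-k}$) in the definition of $\cK_\eta$: the former must decay fast enough to beat the factor $2^k$ coming from Markov's inequality, so that the total error sums to at most $\eta$. Once this balancing is in place, the rest of the argument is a routine application of Prokhorov's theorem and the exchangeability identity.
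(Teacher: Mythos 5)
Your proof is correct. The paper itself does not prove this lemma; it is stated as a quotation from Sznitman's lecture notes (\cite[p.~178]{Sznitman91}), so there is no internal proof to compare against. The argument you give is the standard one: both implications hinge on the exchangeability identity $\E\bigl[\mu^N(A)\bigr] = \P\bigl(Z^{N,1}\in A\bigr)$, combined with Prokhorov's theorem in one direction and a Markov-inequality/Borel--Cantelli-type union bound over a geometric sequence of compacts in the other. Your choice of scales ($4^{-k}\eta$ exceptional probability versus $2^{-k}$ threshold) correctly absorbs the $2^k$ from Markov so that the union bound sums to $\eta$; the set $\cK_\eta$ is uniformly tight and hence has compact closure in $\cP(\cZ)$ by Prokhorov, which is what the definition of tightness of $(\mu^N)_N$ requires. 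This is essentially the proof in Sznitman's notes and in standard references on propagation of chaos.
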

\noindent
We apply this lemma to $Y^{N,i} \in \cZ := C([0,\infty); \R^2_\pm)$ and use the following lemma to  conclude that $L^N$ is a tight sequence of random variables in $\cP(\cZ) = \cP(C([0,\infty); \R^2_\pm))$. 

\begin{lem}
  \label{l:tightness-XN1}
$\{Y^{N,1}, N\geq 2\}$ is a tight sequence of random variables in~$C([0,\infty);\R^2)$. 
\end{lem}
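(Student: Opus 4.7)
Since $b^{N,1}$ is constant in time and takes values in the compact set $\{-1,+1\}$, the space $C([0,\infty);\R^2_\pm)$ splits as $C([0,\infty);\R^2)\times\{-1,+1\}$, and tightness of $(Y^{N,1})_N$ reduces to that of the position component $(X^{N,1})_N$ in $C([0,\infty);\R^2)$. By the compact-open topology it suffices to prove tightness on $C([0,T];\R^2)$ for each fixed $T>0$. I would decompose
\[
  X^{N,1}_t = X^{N,1}_0 + \sigma B^{N,1}_t + J^N_t,\qquad
  J^N_t := \gamma_N \sum_{j=1}^N b^{N,1} b^{N,j}\int_0^t K(X^{N,1}_s - X^{N,j}_s)\,ds,
\]
and handle the three summands separately. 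Tightness of $(X^{N,1}_0)_N$ follows from the exchangeability of $\bY^N_0$ and~\eqref{cond:params-MF-limit-3}, which together yield $\E|X^{N,1}_0|\leq C$ uniformly in $N$, hence tightness via Markov's inequality. Tightness of the Brownian motions $(B^{N,1})_N$ is standard. The only substantive point is the tightness of the drift $(J^N)_N$.

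\textbf{Key estimate on the drift.} Following the template of Lemma~\ref{l:tight}, I would pick $\alpha\in(2\ol\gamma/\sigma^2,1)$, which is possible by~\eqref{cond:params-MF-limit} and, by~\eqref{cond:params-MF-limit-2}, legitimates applying Corollary~\ref{c:ex:random-signs} with this $\alpha$ for all $N$ large enough. Using $|K(x)|\leq|x|^{-1}$, H\"older's inequality with exponent $2-\alpha>1$, and the elementary bound $u^{1/(2-\alpha)}\leq 1+u$ valid for $u\geq 0$ (since $1/(2-\alpha)\in(0,1]$), I would obtain for all $0\leq s<t\leq T$ that
\[
  |J^N_t - J^N_s| \,\leq\, \gamma_N (t-s)^{\beta}\, Z^N,
  \qquad \beta := \tfrac{1-\alpha}{2-\alpha},
\]
where
\[
  Z^N := \sum_{j=2}^N\biggl(1 + \int_0^T |X^{N,1}_r - X^{N,j}_r|^{\alpha-2}\,dr\biggr).
\]
The exchangeability of $\bY^N$ together with the $N$-independent bound~\eqref{c:ex:est:Sec5} gives $\E Z^N\leq (N-1)(1+C)$, and therefore $\gamma_N\,\E Z^N\leq C'$ uniformly in $N$ since $\gamma_N(N-1)\to\ol\gamma$.

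\textbf{Compactness and main obstacle.} For $A>0$ I would set $\cK_A := \{f\in C([0,T];\R^2):f(0)=0,\ |f(t)-f(s)|\leq A(t-s)^\beta \text{ for all } s,t\}$, precompact by Arzel\`a--Ascoli. Markov's inequality then yields
\[
  \P\bigl(J^N|_{[0,T]}\notin \cK_A\bigr) \leq \P(Z^N>A/\gamma_N)\leq \frac{\gamma_N\,\E Z^N}{A}\leq \frac{C'}{A},
\]
uniformly in $N$, so sending $A\to\infty$ gives tightness of $(J^N|_{[0,T]})_N$, which combined with the two other pieces completes the proof. The crux is really the uniform bound~\eqref{c:ex:est:Sec5}: the mean-field scaling $\gamma_N=O(1/N)$ is exactly what cancels the linear-in-$N$ growth of $\E Z^N$, while the strict inequality $2\ol\gamma<\sigma^2$ is what permits choosing $\alpha<1$ in Corollary~\ref{c:ex:random-signs}. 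If either ingredient were relaxed, Markov's inequality could no longer absorb the $N$-dependence, and the present compactness argument would break down.
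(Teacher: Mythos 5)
Your proposal is correct and follows essentially the same route as the paper: the same decomposition into initial condition, Brownian motion, and drift, the same H\"older/Markov/Arzel\`a--Ascoli argument for the drift using the $N$-uniform bound~\eqref{c:ex:est:Sec5} together with $\gamma_N(N-1)\to\ol\gamma$. The only (immaterial) difference is that for the initial positions you use exchangeability plus the moment bound~\eqref{cond:params-MF-limit-3} and Markov's inequality, whereas the paper invokes the assumed narrow convergence of $L^N_0$ together with Sznitman's lemma; both are valid.
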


\begin{proof}[Proof of Lemma~\ref{l:tightness-XN1}]
The argument is similar to the proof of Lemma~\ref{l:tight}. It is sufficient to prove tightness in $C_b([0,T];\R^2)$ for each $T>0$. Writing 
\[
X^{N,1}_t = X_0^{N,1} + \sigma B_t^{N,1} + J^{N,1}_t, 
\qquad
J^{N,1}_t = \gamma_N \int_0^t \sum_{j=2}^N b^1b^j K(X^{N,1}_s-X^{N,j}_s)\, ds,
\]
we note that $\{B^{N,1}: N\geq 2\}$ is tight in $C_b([0,T];\R^2)$ by the properties of Brownian motion. In addition, since $L^N_0$ converges narrowly in probability to $\rho^\circ$ and therefore is tight in $\cP(\R^2_\pm)$, the sequence $Y^{N,1}$ is tight in $\R^2_\pm$ by Lemma~\ref{l:Sznitman}. We therefore only need to show that $J^{N,1}$ is tight.

By exchangeability and our assumptions on the parameters in \eqref{cond:params-MF-limit-2}, \eqref{cond:params-MF-limit-3} and \eqref{cond:params-MF-limit-4}, the estimate~\eqref{c:ex:est:Sec5} implies
\begin{equation}
\label{est:tightness-N-to-infty}  
\sup_{N\geq 2}\E Z_N <\infty, \qquad \text{where}\qquad Z_N = \frac1{N-1} \sum_{\substack{j=2}}^N  \int_0^T |X_s^{N,1} - X_s^{N,j}|^{\alpha - 2} \,ds.
\end{equation}
As in the proof of Lemma~\ref{l:tight} we then derive that 
\[
|J^{N,1}_t-J^{N,1}_s| \leq |t-s|^{\frac{1-\alpha}{2-\alpha}} \;\gamma_N \sum_{j=2}^N \biggl( 1+ \int_0^T   |X^{N,1}_t - X^{N,j}_t|^{\alpha-2}\, dt\biggr)
\leq |t-s|^{\frac{1-\alpha}{2-\alpha}}  \, \frac{\sigma^2}2 (1+Z_N)
\]
for $N$ large enough, so that the tightness of $J^{N,1}$ in $C([0,T];\R^2)$ follows by~\eqref{est:tightness-N-to-infty} and the Markov inequality. 
\end{proof}

\bigskip
We turn to part~\ref{t:mf-limit:solution}. Let $L$ be as in part~\ref{t:mf-limit:tightness} and $\rho\in C([0,\infty);\cP(\R^2_\pm))$ with $\rho_t = L_t$ as in part~\ref{t:mf-limit:solution}. We note that $\rho_{t=0}$ is the weak limit in $\cP(\R^2_\pm)$ of $L^N_{t=0}$, and by assumption coincides with $\rho^\circ$. This shows that $\rho$ satisfies part~\ref{d:solution-of-PDE-initial-data} of Definition~\ref{d:solution-of-PDE}.
\comm{[Proof of $L^N_{t=0} \xto d L_{t=0}$. First, we have $L^N \xto d L$. If the evaluation map $e_t : \cP( C([0,\infty);\R^2_\pm) ) \to \cP(\R^2_\pm)$ given by $e_t(\ell) = \ell_t$ (here, $\ell \in \cP( C([0,\infty);\R^2_\pm) )$ is a deterministic path; the same object as $L(\omega)$ for a given $\omega$) is (narrowly) continuous, then $L^N_t = e_t(L_N) \xto d e_t(L) = L_t$ is a standard thm (follows from the pushforward and change of variables formula). The continuity of $e_t$ goes as follows: we need to show that $\ell^\e \weakto \ell$ implies $e_t(\ell^\e) = \ell_t^\e \weakto \ell_t = e_t(\ell)$. Verifying this is easy when writing the narrow convergences out in terms of test functions; see p.61 for details]} 

Part~\ref{d:solution-of-PDE-cond-power-minus-one=part} of Definition~\ref{d:solution-of-PDE} is the following lemma.
\begin{lem}[{\cite[p.~2824-2825]{FournierJourdain17}}]
  \label{l:FJ-3}
For all $T>0$,
\[
\E \int_0^T \iint\limits_{(\R^2_\pm)^2} |x-x'|^{\alpha-2}L_t(dxdb)L_t(dx'db') dt < \infty.
\]
\end{lem}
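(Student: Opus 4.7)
The plan is to approximate the singular kernel $|x-x'|^{\alpha-2}$ by bounded continuous truncations, transfer the uniform bound \eqref{c:ex:est:Sec5} from the empirical measures $L^{N_k}$ to the limit $L$ via narrow convergence, and then remove the truncation by monotone convergence.

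For $M > 0$ I set $\phi_M(z) := |z|^{\alpha-2} \wedge M$ for $z\in\R^2$; since $\alpha<2$ this is continuous on all of $\R^2$ and bounded by $M$. Writing (with a slight abuse of notation) $x(t)\in\R^2$ for the position component at time $t$ of a path in $C([0,\infty);\R^2_\pm)$, I define the functional
\[
F_M(\ell) := \int_0^T \iint \phi_M\bigl(x(t)-x'(t)\bigr)\, \ell(d\omega)\, \ell(d\omega')\, dt,
\qquad \ell \in \cP(C([0,\infty);\R^2_\pm)).
\]
Since evaluation at a fixed time is continuous in the compact-open topology on $C([0,\infty);\R^2_\pm)$, the kernel $(\omega,\omega')\mapsto \int_0^T\phi_M(x(t)-x'(t))\,dt$ is bounded by $MT$ and continuous on the product space; hence $F_M$ is bounded and continuous on $\cP(C([0,\infty);\R^2_\pm))$ endowed with the narrow topology. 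Pushing forward $L$ under time-$t$ evaluation gives $F_M(L) = \int_0^T\iint_{(\R^2_\pm)^2}\phi_M(x-x')L_t(dxdb)L_t(dx'db')\,dt$.

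By the continuous mapping theorem applied to the narrow convergence $L^{N_k}\to L$, the random variables $F_M(L^{N_k})$ converge in distribution to $F_M(L)$. Since they are nonnegative, Skorokhod representation together with Fatou's lemma yields $\E F_M(L) \leq \liminf_{k\to\infty} \E F_M(L^{N_k})$. Splitting the double sum in the empirical version into its diagonal and off-diagonal parts and using $\phi_M\leq M$,
\[
F_M(L^{N_k}) = \frac1{N_k^2}\sum_{i,j=1}^{N_k} \int_0^T \phi_M\bigl(X^{N_k,i}_t - X^{N_k,j}_t\bigr)\,dt \leq \frac{MT}{N_k} + \frac1{N_k^2}\sum_{\substack{i,j=1\\i\neq j}}^{N_k} \int_0^T |X^{N_k,i}_t - X^{N_k,j}_t|^{\alpha-2}\,dt.
\]
Taking expectations, the uniform bound \eqref{c:ex:est:Sec5} from Corollary~\ref{c:ex:random-signs} gives $\E F_M(L^{N_k}) \leq MT/N_k + (N_k-1)C/N_k$, so $\E F_M(L) \leq C$ uniformly in $M$.

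Finally, since $\phi_M(z)\uparrow |z|^{\alpha-2}$ pointwise as $M\to\infty$, Tonelli and the monotone convergence theorem give
\[
\E \int_0^T \iint_{(\R^2_\pm)^2} |x-x'|^{\alpha-2}\, L_t(dxdb)L_t(dx'db')\, dt = \lim_{M\to\infty}\E F_M(L) \leq C < \infty,
\]
which is the required bound. The only real obstacle is the diagonal singularity in the empirical measure; this is harmlessly absorbed into the vanishing remainder $MT/N_k$ once the kernel is truncated, after which the argument becomes a routine continuity-Fatou-monotone-convergence combination.
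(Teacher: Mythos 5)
Your proof is correct and takes essentially the same route as the paper: truncate the kernel, bound the empirical double integral via \eqref{c:ex:est:Sec5} with the diagonal absorbed into an $O(M/N)$ remainder, pass to the limit using narrow convergence, and remove the truncation by monotone convergence. The only difference is presentational---you package the limit passage as continuity of a bounded functional on $\cP(C([0,\infty);\R^2_\pm))$ rather than via the time-$t$ marginals, which is a valid (and arguably cleaner) way to carry out the same step.
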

\noindent
The proof follows from remarking that for any $m>0$
\begin{multline*}
\E\int_0^T \iint\limits_{(\R^2_\pm)^2} (|x-x'|^{\alpha-2}\wedge m) L^N_t(dxdb)L^N_t(dx'db')\, dt \\
= \frac {mT}N + \E\frac1{N^2} \sum_{i=1}^N \sum_{j \neq i} \int_0^T (|X_t^{N,i}- X_t^{N,j}|^{\alpha-2}\wedge m)  \,dt.
\end{multline*}
The expectation on the right-hand side can be controlled by~\eqref{c:ex:est:Sec5}, uniformly in $m$ and $N$.
This shows that $\rho$ also satisfies part~\ref{d:solution-of-PDE-cond-power-minus-one=part}. 
\comm{[This argument relies on
\[ \E\int_0^T \iint\limits_{(\R^2_\pm)^2} (|x-x'|^{-1}\wedge m) L^N_t(dxdb)L^N_t(dx'db')\, dt
\to \E\int_0^T \iint\limits_{(\R^2_\pm)^2} (|x-x'|^{-1}\wedge m) L_t(dxdb)L_t(dx'db')\, dt. \]
To show this, it is OK to take $\E$ inside $\int_0^T$. 
Since $L^N \to L$ in distribution and $L \to (L,L)$ is continuous, we also have $(L^N , L^N) \to (L , L)$ in distribution. Then, similar to the orange comment above, $(L_t^N , L_t^N) \to (L_t , L_t)$ in distribution, and that is already stronger than what we need to show (as we only need to show it for a specific test function)].
}

\medskip
Finally, to show that $\rho$ also satisfies part~\ref{d:solution-of-PDE-def-W} of Definition~\ref{d:solution-of-PDE} we use the following two lemmas. 
\begin{lem}[{\cite[p.~2825--2826]{FournierJourdain17}}]
  \label{l:FJ-1}
  Fix $T$ and $\varphi$ as in part~\ref{d:solution-of-PDE-def-W} of Definition~\ref{d:solution-of-PDE}. 
For any $\e>0$ define $\cW_\e$ as in~\eqref{eqdef:W} but with $K$ replaced by $K_\e(x) = x/(|x|^2 +\e^2)$ as in~\eqref{eqdef:K_e}. Then there exists $C >0$ independent of $N, \e$ such that 
\[
\E|\cW(L)-\cW_\e(L)| 
+ \sup_{N\geq 2}\E |\cW(L^N)-\cW_\e(L^N)| \leq C\e^{1-\alpha}. 
\]
\end{lem}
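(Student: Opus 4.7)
The plan is to reduce the estimate to a pointwise comparison between $K$ and $K_\e$, and then exploit the uniform $|x-x'|^{\alpha-2}$-integrability already available from Lemma~\ref{l:FJ-3} (for $L$) and from estimate~\eqref{c:ex:est:Sec5} (for $L^N$). Since $\cW$ and $\cW_\e$ agree except in the interaction term, a direct subtraction gives, for any $\rho$,
\begin{equation*}
\cW(\rho) - \cW_\e(\rho) = -\ol\gamma \int_0^T \iint_{(\R^2_\pm)^2} bb'\,[K-K_\e](x-x') \cdot \nabla\varphi(x)\,\rho_t(dx\,db)\,\rho_t(dx'\,db')\,dt,
\end{equation*}
so the whole task is to control the integral $\iint |K-K_\e|(x-x')\,\rho_t\otimes \rho_t$ in $L^1(0,T)$.

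The key ingredient will be the pointwise bound
\begin{equation*}
|K(x)-K_\e(x)| = \frac{\e^2}{|x|(|x|^2+\e^2)} \leq \e^{1-\alpha}\,|x|^{\alpha-2}\qquad (x\neq 0),
\end{equation*}
which I would verify by splitting into $|x|\leq\e$ (using $|x|^{1-\alpha}\leq\e^{1-\alpha}$ and $\e^2/(|x|^2+\e^2)\leq 1$) and $|x|>\e$ (using $|x|^2+\e^2\geq|x|^2$ and $|x|^{-1-\alpha}\leq\e^{-1-\alpha}$, valid since $-1-\alpha<0$). Combined with $|bb'|=1$, $\|\nabla\varphi\|_\infty<\infty$, and the convention $K(0)=K_\e(0)=0$ (which makes the diagonal $x=x'$ contribute zero in the integrals below), this yields
\begin{equation*}
|\cW(\rho) - \cW_\e(\rho)| \leq \ol\gamma\,\|\nabla\varphi\|_\infty\,\e^{1-\alpha} \int_0^T \iint |x-x'|^{\alpha-2}\,\rho_t\otimes \rho_t\,dt.
\end{equation*}

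It then remains to bound the right-hand side in expectation, uniformly, at $\rho = L$ and $\rho = L^N$. For $\rho = L$, Lemma~\ref{l:FJ-3} applies directly. For $\rho = L^N$, the double integral equals $N^{-2}\sum_{i \neq j} |X_t^{N,i} - X_t^{N,j}|^{\alpha-2}$ (the diagonal vanishes by the $K(0)=K_\e(0)=0$ convention) and its time-integrated expectation is bounded uniformly in $N$ by \eqref{c:ex:est:Sec5}, since $N^{-2}\cdot N(N-1)\leq 1$. The only nontrivial step is the two-case pointwise bound; everything else is a mechanical application of results already established, so I do not expect a genuine obstacle beyond this short computation.
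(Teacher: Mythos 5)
Your proposal is correct and follows essentially the same route as the paper: both reduce to the pointwise bound $|K(x)-K_\e(x)|=\e^2/(|x|(|x|^2+\e^2))\leq \e^{1-\alpha}|x|^{\alpha-2}$ and then invoke Lemma~\ref{l:FJ-3} for $L$ and~\eqref{c:ex:est:Sec5} for $L^N$. The only (immaterial) difference is that you verify the pointwise bound by splitting into $|x|\leq\e$ and $|x|>\e$, whereas the paper obtains it in one line from Young's inequality $|x|^{1-\alpha}\e^{1+\alpha}\leq|x|^2+\e^2$.
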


\begin{proof}
Combining the observation
\begin{equation*}
  |K(x)-K_\e(x)| = \frac{\e^2}{|x|(|x|^2 + \e^2)}
\end{equation*}
with the following application of Young's inequality,
\begin{equation*}
  |x|^{1-\alpha} \e^{1+\alpha} 
  \leq \frac{1-\alpha}2 |x|^2 + \frac{1+\alpha}2 \e^2
  \leq |x|^2 + \e^2,
\end{equation*}
we obtain $|K(x)-K_\e(x)|\leq \e^{1-\alpha}|x|^{\alpha-2}\bOne\{x\not=0\}$. With this estimate, Lemma \ref{l:FJ-1} follows from Lemma~\ref{l:FJ-3} and~\eqref{c:ex:est:Sec5}. 
\end{proof}

\begin{lem}[{\cite[p.~2825]{FournierJourdain17}}]
  \label{l:FJ-2}
There exists $C > 0$ independent of $N$ such that 
\[
\E | \cW(L^N) | \leq \frac C{\sqrt N} + C' (N \gamma_N - \ol \gamma).  
\]
\end{lem}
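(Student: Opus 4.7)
\textbf{Proof plan for Lemma~\ref{l:FJ-2}.}
The plan is to apply It\^o's formula to $\varphi(X^{N,i}_t,b^{N,i})$ individually, sum over $i=1,\dots,N$ and divide by $N$, then read off $\cW(L^N)$ as the discrepancy between the exact It\^o identity and the limiting functional $\cW$. Since $b^{N,i}$ is time-constant, It\^o's formula together with the SDE~\eqref{SDE:full:param} yields, for each $i$,
\begin{equation*}
  \varphi(Y^{N,i}_T)-\varphi(Y^{N,i}_0)
  = \sigma\int_0^T \nabla\varphi(X^{N,i}_s,b^{N,i})\cdot dB^{N,i}_s
  + \gamma_N\!\int_0^T \sum_{j=1}^N b^{N,i}b^{N,j} K(X^{N,i}_s-X^{N,j}_s)\cdot\nabla\varphi\,ds
  + \tfrac{\sigma^2}2\!\int_0^T \Delta\varphi\,ds,
\end{equation*}
and the It\^o correction term matches (by the conventions of Definition~\ref{d:solution-of-PDE}) the diffusion piece of $\cW$.

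After averaging over $i$, the identity can be compared termwise with the definition~\eqref{eqdef:W} of $\cW(L^N)$. The drift piece in $\cW$ carries prefactor $\ol\gamma/N^2$ while the analogous sum produced by It\^o carries prefactor $\gamma_N/N$; using that the diagonal $i=j$ contributes zero because $K(0)=0$, the two differ only by the scalar $(N\gamma_N-\ol\gamma)/N^2$. Thus one obtains the clean decomposition
\begin{equation*}
  \cW(L^N)
  \;=\; M^N_T
  \;+\; \frac{N\gamma_N-\ol\gamma}{N^2}\int_0^T \sum_{i\neq j} b^{N,i}b^{N,j}\, K(X^{N,i}_s-X^{N,j}_s)\cdot\nabla\varphi(X^{N,i}_s,b^{N,i})\,ds,
\end{equation*}
where $M^N_T := \tfrac{\sigma}{N}\sum_{i=1}^N \int_0^T \nabla\varphi\cdot dB^{N,i}_s$ is a martingale.

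The two error terms are then estimated separately. For $M^N_T$, the It\^o isometry combined with $\|\nabla\varphi\|_\infty <\infty$ (since $\varphi\in C^2_b$) and the independence of the $B^{N,i}$ gives $\E|M^N_T|^2 \le \sigma^2 T\|\nabla\varphi\|_\infty^2/N$, hence $\E|M^N_T|\le C/\sqrt{N}$ by Cauchy--Schwarz. For the drift-discrepancy term, one bounds the summand using $|K(x)\cdot\nabla\varphi|\le \|\nabla\varphi\|_\infty |x|^{-1}$ and the elementary inequality $|x|^{-1}\le |x|^{\alpha-2}+1$ for $\alpha<1$; then~\eqref{c:ex:est:Sec5} (which applies to $L^N$ via Corollary~\ref{c:ex:random-signs}, legitimate once $N$ is large enough that $\gamma_N<\sigma^2/(2(N-1))$ by hypothesis~\eqref{cond:params-MF-limit}) controls $\tfrac{1}{N(N-1)}\sum_{i\neq j}\E\int_0^T|X^{N,i}_s-X^{N,j}_s|^{\alpha-2}ds$ uniformly in $N$. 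Multiplying by the scalar $(N\gamma_N-\ol\gamma)$ produces the second contribution in the claimed bound.

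\textbf{Main obstacle.} The delicate step is not the martingale estimate, which is routine, but the uniform-in-$N$ control of the singular double sum $\tfrac{1}{N^2}\sum_{i\neq j}\E\int_0^T|X^{N,i}-X^{N,j}|^{-1}ds$. The proof of Theorem~\ref{t:ex}\ref{t:ex:fixed-signs} only provides an $N$-dependent constant, so one genuinely needs the sharpened estimate~\eqref{c:ex:est} from Theorem~\ref{t:ex}\ref{t:ex:stronger-estimate}; the hypothesis $\sigma^2>2\ol\gamma$ in~\eqref{cond:params-MF-limit} is precisely what makes an $\alpha<1$ available with $2\gamma_N(N-1)/\sigma^2<\alpha$ for all $N$ large, which in turn keeps the constant $C$ in~\eqref{c:ex:est:Sec5} bounded uniformly in $N$.
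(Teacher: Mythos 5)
Your proposal is correct and follows essentially the same route as the paper: the identical It\^o decomposition of $\cW(L^N)$ into the martingale term plus the $(N\gamma_N-\ol\gamma)$-weighted drift discrepancy, the same It\^o-isometry/Jensen bound giving $C/\sqrt N$, and the same use of the uniform-in-$N$ estimate~\eqref{c:ex:est:Sec5} for the singular double sum. Your explicit remarks on why~\eqref{c:ex:est:Sec5} applies (the inequality $|x|^{-1}\le |x|^{\alpha-2}+1$ and the hypothesis $\gamma_N<\sigma^2/(2(N-1))$ for large $N$) merely spell out steps the paper leaves implicit.
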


\begin{proof}
Applying It\^o's lemma (e.g.~\cite[Th.~4.2.1]{Oksendal03}) to $\frac1N \sum_{i=1}^N \varphi (X_t^{N,i},b^{N,i})$ yields
\begin{align*}
  \cW(L^N) 
  &= \frac\sigma N\sum_{i=1}^N \int_0^T \nabla \varphi(X_t^{N,i},b^{N,i}) \cdot dB^{N,i}_t \\
  &\quad + \frac{ N \gamma_N - \ol \gamma}{N^2} \int_0^T \sum_{i,j=1}^N b^{N,i} b^{N,j} K(X_t^{N,i} - X_t^{N,j}) \cdot \nabla \varphi(X_t^{N,i}, b^{N,i}) \, dt.
\end{align*}
Then we take the expectation of the absolute value of both terms in the right-hand side. The second term is bounded by \eqref{c:ex:est:Sec5}. For the square of the first term, we apply Jensen's inequality and estimate the result using independence and It\^o isometry as
\begin{align*}
\MoveEqLeft\E\biggl[ \Big( \frac\sigma N \sum_{i=1}^N \int_0^T \nabla \varphi(X_t^{N,i},b^{N,i}) \cdot dB^{N,i}_t \Big)^2 \biggr] \\
&= \frac{\sigma^2}{N^2} \sum_{i,j=1}^N \E\biggl[  
  \int_0^T \nabla \varphi(X_t^{N,i},b^{N,i}) \cdot dB^{N,i}_t
  \int_0^T \nabla \varphi(X_t^{N,j},b^{N,j}) \cdot dB^{N,j}_t\biggr]\\
&= \frac{\sigma^2}{N^2} \sum_{i=1}^N \E\biggl[ \Big(
  \int_0^T \nabla \varphi(X_t^{N,i},b^{N,i}) \cdot dB^{N,i}_t \Big)^2 \biggr]\\
&= \frac{\sigma^2}{N^2} \sum_{i=1}^N   \E\biggl[\int_0^T \big|\nabla \varphi(X_t^{N,i},b^{N,i})\big|^2 dt\biggr] \leq \frac{\sigma^2}N T \|\nabla \varphi\|_\infty^2.
\qedhere
\end{align*}
\end{proof}

Finally we complete the proof of Theorem \ref{t:mf-limit}. We expand
\begin{align*}
\E |\cW(L)| &\leq \underbrace{\E|\cW(L)-\cW_\e(L)|}_{(a)}
  + \underbrace{\E|\cW_\e(L)-\cW_\e(L^N)|}_{(b)}
  + \underbrace{\E|\cW_\e(L^N)-\cW(L^N)|}_{(c)}
  + \underbrace{\E|\cW(L^N)|}_{(d)}.
\end{align*}
From Lemma~\ref{l:FJ-1} we deduce that terms $(a)$ and $(c)$ are bounded by $C\e^{1-\alpha}$ uniformly in $N$. Taking $N\to\infty$, we observe from the continuity of $\cW_\e$ that term $(b)$ vanishes and from Lemma~\ref{l:FJ-2} that term $(d)$ also vanishes. Then, taking $\e \downarrow 0$ we obtain $\E|\cW(L)|=0$. Hence, $\cW(L)=0$ almost surely. This proves  part~\ref{t:mf-limit:solution} of Theorem \ref{t:mf-limit} and concludes the proof.

\section*{Funding}
PvM has received financial support from JSPS KAKENHI Grant Numbers JP20K14358 and JP24K06843. 
MP's visit to Kanazawa university was supported by Prof.\ M.\ Kimura's JSPS KAKENHI Grant Number JP20KK0058. 

\bibliographystyle{alphainitials} 
\bibliography{refsPatrick,refsMark}

\end{document}